
\documentclass[11pt]{amsart}

\usepackage{amsfonts}
\usepackage{amssymb}
\usepackage{amsmath}
\usepackage{amsthm}
\usepackage{pinlabel}
\usepackage{hyperref}
\usepackage{tikz}
\usepackage{subcaption}

\newcommand{\nc}{\newcommand}
\nc{\dmo}{\DeclareMathOperator}
\nc{\nt}{\newtheorem}

\title{Stable Subgroups of the Genus Two Handlebody Group}
\author{Marissa Chesser}



\nt{theorem}{Theorem}[section]
\nt{lemma}[theorem]{Lemma}
\nt{prop}[theorem]{Proposition}
\nt{question}[theorem]{Question}
\nt{cor}[theorem]{Corollary}
\nt{fact}[theorem]{Fact}
\nt{problem}[theorem]{Problem}
\nt{conjecture}[theorem]{Conjecture}

\theoremstyle{remark}
\nt{remark}[theorem]{Remark}

\theoremstyle{definition}
\nt{definition}[theorem]{Definition}

\nc{\Z}{\mathbb{Z}} 
\nc{\R}{\mathbb{R}} 
\nc{\Q}{\mathbb{Q}} 
\nc{\N}{\mathbb{N}} 

\nc{\M}{\mathcal{M}} 
\nc{\Pnm}{\mathcal{P}} 
\nc{\hg}{\mathcal{H}_g} 
\nc{\hh}{\mathcal{H}_2} 
\nc{\fs}{\mathfrak{F}} 
\nc{\is}{\mathfrak{S}} 
\nc{\cat}{\operatorname{CAT}(0)} 
\nc{\bd}{\partial} 
\nc{\contact}[1]{\mathcal{C}#1} 
\nc{\fcontact}[1]{\hat{\mathcal{C}}#1} 
\nc{\gate}[2]{\mathfrak{g}_{#1}(#2)} 
\nc{\bdM}{\bd_M} 
\nc{\DVg}{\mathcal{D}(V_g)} 

\nc{\margin}[1]{\marginpar{\tiny #1}}

\nc{\p}[1]{\smallskip\noindent{{\bf #1}}}

\include{diagram}

\begin{document}


\begin{abstract}
We show that a finitely generated subgroup of the genus two handlebody group is stable if and only if the orbit map to the disk graph is a quasi-isometric embedding. To this end, we prove that the genus two handlebody group is a hierarchically hyperbolic group, and that the maximal hyperbolic space in the hierarchy is quasi-isometric to the disk graph of a genus two handlebody by appealing to a construction of Hamenst\"adt-Hensel. We then utilize the characterization of stable subgroups of hierarchically hyperbolic groups provided by Abbott-Behrstock-Berlyne-Durham-Russell. We also present several applications of the main theorems, and show that the higher genus analogues of the genus two results do not hold.
\end{abstract}

\maketitle

\section{Introduction}\label{sec:intro}

In the setting of hyperbolic groups, quasiconvex subgroups are particularly well-behaved subgroups. Specifically, quasiconvex subgroups of hyperbolic groups are precisely the subgroups that are finitely generated and quasi-isometrically embedded, (see for instance Bridson and Haefliger \cite[Corollary III.$\Gamma$.3.6]{BH}). However, unlike the situation for hyperbolic groups, in the setting of arbitrary finitely generated groups, quasiconvexity is not a quasi-isometric invariant.

One generalization of a quasiconvex subgroup to arbitrary finitely generated groups that is a quasi-isometric invariant is a stable subgroup. Stable subgroups were introduced by Durham and Taylor in \cite{durham2015} as a way of characterizing convex cocompact subgroups of mapping class groups, in the sense of Farb-Mosher \cite{FarbMosher}. Another useful characterization of convex cocompact subgroups of the mapping class group is that the orbit map to the curve graph is a quasi-isometric embedding, which was proven independently by Hamenst\"adt \cite{hamenstaedt2005word} and Kent-Leninger \cite{shadows}. In this paper, we prove an analogous result for the handlebody group of genus two, ie the group of isotopy classes of orientation preserving homeomorphisms of a genus two handlebody.

\begin{theorem}\label{thm:stable-orbit}
Let $V_2$ be a genus two handlebody and suppose $G$ is a finitely generated subgroup of the handlebody group of genus two, $\hh$. Then the following are equivalent.
\begin{enumerate}
    \item $G$ is a stable subgroup of $\hh$.
    \item Any orbit map of $G$ into the disk graph $\mathcal{D}(V_2)$ is a quasi-isometric embedding.
\end{enumerate}
\end{theorem}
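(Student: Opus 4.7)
My strategy is to combine the hierarchical hyperbolicity of $\hh$, which I will establish in an earlier section by adapting a construction of Hamenst\"adt--Hensel, with the characterization of stable subgroups of hierarchically hyperbolic groups due to Abbott--Behrstock--Berlyne--Durham--Russell. Their result characterizes stability in terms of the projections to the hyperbolic spaces of the hierarchy: specifically, a finitely generated subgroup of an HHG is stable if and only if its orbit map to the maximal hyperbolic space in the hierarchy is a quasi-isometric embedding and its projections to the non-maximal hyperbolic spaces of the hierarchy satisfy a uniform boundedness condition. Since I will have shown that the maximal hyperbolic space in my hierarchy on $\hh$ is quasi-isometric to $\DVg$, the disk graph plays the role of the top-level hyperbolic space throughout.

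The forward implication $(1) \Rightarrow (2)$ is then essentially immediate: if $G$ is stable, the ABBDR criterion gives that the orbit map of $G$ to the top-level hyperbolic space is a quasi-isometric embedding, and composing with the quasi-isometry to $\DVg$ produces the desired quasi-isometric embedding of $G$ into the disk graph.

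For the reverse implication $(2) \Rightarrow (1)$, the orbit map to the top-level hyperbolic space is automatically a quasi-isometric embedding via the quasi-isometry with $\DVg$, so the remaining task is to verify the uniform-boundedness half of the ABBDR criterion on the non-maximal hyperbolic spaces. I plan to carry this out by analyzing the non-maximal domains of the hierarchy on $\hh$ --- of which there are only a few, owing to the low complexity of genus two --- and arguing via the hierarchical distance formula together with the bounded geodesic image theorem that an unbounded projection to any such domain would be incompatible with the quasi-isometric embedding into $\DVg$.

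The main obstacle is this reverse direction. Its argument will genuinely depend on the low-complexity features of the genus two hierarchy, which is reflected in the fact that the higher genus analogues fail, as announced in the abstract; an important part of the work is isolating exactly which structural feature of genus two makes the disk graph sufficient to detect stability on its own.
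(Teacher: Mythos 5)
Your overall framing—establish an HHG structure for $\hh$ with top-level hyperbolic space quasi-isometric to $\mathcal{D}(V_2)$, then invoke Abbott--Behrstock--Berlyne--Durham--Russell—matches the paper. But the way you state and plan to apply the ABBDR criterion contains a genuine misreading that creates artificial work for the reverse direction and simultaneously omits the hypothesis that actually needs to be verified.

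You paraphrase the ABBDR characterization as a conjunction: stability iff (orbit map to the top-level space is a q.i.\ embedding) \emph{and} (projections to lower domains are uniformly bounded). That is not how the relevant result reads. The version used here (\cite[Corollary 6.2]{abbott2017largest}, recorded as Theorem~\ref{thm:abbott}) gives three \emph{mutually equivalent} conditions for a finitely generated subgroup of an HHG with \emph{unbounded products}: stability; undistortion together with uniformly bounded projections; and the orbit map to the maximal hyperbolic space being a q.i.\ embedding. In particular, once the HHG structure with unbounded products is in hand and the top-level hyperbolic space is identified with $\mathcal{D}(V_2)$, the implication $(2)\Rightarrow(1)$ is immediate—there is no separate uniform-boundedness check to perform, and your proposed analysis of the non-maximal domains via the distance formula and bounded geodesic image is re-deriving something the theorem already hands you. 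What you \emph{do} need, and what your plan omits entirely, is the verification that the HHS structure on $\mathcal{M}$ has unbounded products; this is the hypothesis that makes the clean three-way equivalence available, and the paper proves it in Lemma~\ref{lem:unboundedproducts} using the infinite cardinality of the parallelism classes of combinatorial hyperplanes (Corollaries~\ref{cor:twistsimplytrans} and~\ref{cor:switchsimplytrans}). Replacing your planned reverse-direction argument with that verification is both necessary and sufficient, and it is where the ``low-complexity'' structure of genus two actually enters.
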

The disk graph is a $\delta$-hyperbolic graph akin to the curve graph whose vertices correspond to disk-bounding curves on the boundary of the handlebody, (called meridians), and whose edges correspond to disjointness.

The equivalence of stability and quasi-isometrically embedding in the curve graph is particularly notable because analogous characterizations have been proven in a number of other settings closely related to mapping class groups.
\begin{enumerate}
    \item In the setting of right angled Artin groups, Koberda, Mangahas, and Taylor \cite{RAAGs} show that stability is equivalent to quasi-isometrically embedding in the extension graph, and is equivalent to being purely loxodromic.
    \item In the setting of relatively hyperbolic groups, Aougab, Durham, and Taylor \cite{Aougab_2017} show that stability is equivalent to quasi-isometrically embedding in the cusped space or the coned off Cayley graph, under mild assumptions on the peripheral subgroups.
    \item In the setting of $\operatorname{Out}(F_n)$, Aougab, Durham, and Taylor \cite{Aougab_2017} show that quasi-isometrically embedding in the free factor graph implies stability.
    \item In the setting of hierarchically hyperbolic groups (HHGs), Abbott, Behrstock, Berlyne, Durham, and Russell \cite{abbott2017largest} show that stability is equivalent to quasi-isometrically embedding in the maximal $\delta$-hyperbolic space, and is equivalent to having uniformly bounded projections. We include in Section \ref{subsec:coarsestable} a version of this theorem that will be used in this paper.
\end{enumerate}

Our main theorem provides yet another instance of this type of characterization of stable subgroups, at least for genus two. Indeed, in the final section of this paper, we provide a counterexample showing that the higher dimensional analogue of Theorem \ref{thm:stable-orbit} does not hold.

\subsection{Methodology}\label{subsec:method}

Let $V_g$ be a genus $g$ handlebody, and $\hg$ the genus $g$ handlebody group. As the boundary of a handlebody is homeomorphic to a surface of genus $g$, one can view $\hg$ as a subgroup of the surface mapping class group $MCG(\bd V_g)$. Similarly, one can view the disk graph $\mathcal{D}(V_g)$ as a subgraph of the curve graph $\mathcal{C}(\bd V_g)$. Given this natural relationship between surface mapping class groups and handlebody groups, one might expect the characterization of stable subgroups of $\hg$ to be straightforward from the characterization of stable subgroups of surface mapping class groups. However, for genus $g\geq 2$, Hamenst\"adt and Hensel \cite{hamenstdt2011geometry} show that $\hg$ is exponentially distorted in $MCG(\bd V_g)$. Furthermore, even though Masur and Minsky \cite{diskQuasiconvex} showed that $\mathcal{D}(V_g)$ is quasiconvex in $\mathcal{C}(\bd V_g)$, Masur and Schleimer \cite{MasSchleim} proved that the inclusion $\mathcal{D}(V_g) \hookrightarrow \mathcal{C}(\bd V_g)$ is in general not a quasi-isometric embedding. This means that much of the toolkit used in the surface mapping class group setting cannot be easily utilized in the handlebody group setting. Indeed, even though subgroups $G\leq \hh \leq MCG(\bd V_2)$ that are stable in $MCG(\bd V_2)$ must be stable in $\hh$, via Aougab, Durham, and Taylor \cite[Theorem 1.6]{Aougab_2017}, even for cyclic subgroups of $\hh$, being stable in $\hh$ does not necessarily imply stability in $MCG(\bd V_2)$, (see for instance Hensel's survey paper \cite[Example 10.2]{primerHg}).

The main method employed to prove Theorem \ref{thm:stable-orbit} is to use the machinery of hierarchically hyperbolic spaces. In particular, we show that the $\cat$ cube complex $\M$ constructed by Hamenst\"adt and Hensel \cite{hamHenDehn}, on which $\hh$ acts properly, cocompactly, and by isometries, is a hierarchically hyperbolic space (HHS). It has been conjectured that any group acting properly, comcompactly, and by isometries on a $\cat$ cube complex is in fact an HHG, but as of yet, this has not been proven, (see for instance the discussion in the introduction of Hagen and Susse \cite{Hagen_2020}). Thus, in order to prove that $\M$ is an HHS, (and hence that $\hh$ is an HHG), we construct a factor system for $\M$ following a framework developed by Behrstock, Hagen, and Sisto \cite{behrstock2017} and using techniques developed by Hagen and Susse \cite{Hagen_2020}.

We furthermore show that the maximal $\delta$-hyperbolic space in the HHS structure of $\M$ is quasi-isometric to the disk graph. The maximal $\delta$-hyperbolic space in the setting of $\cat$ cube complexes is the factored contact graph of the entire cube complex. The factored contact graph is an augmentation of the contact graph, which is the incidence graph of hyperplane carriers in the cube complex. To show that the factored contact graph is quasi-isometric to the disk graph, we characterize the hyperplanes of $\M$ and demonstrate how these hyperplanes correspond to specific meridians. The above leads us to the second theorem.

\begin{theorem}\label{thm:hhg-diskgraph}
The handlebody group of genus two, $\hh$, is an HHG with top level hyperbolic space coarsely $\hh$-equivariantly quasi-isometric to the disk graph, $\mathcal{D}(V_2)$.
\end{theorem}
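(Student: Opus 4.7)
The plan is to realize $\hh$ as an HHG via its proper, cocompact, isometric action on the Hamenst\"adt-Hensel $\cat$ cube complex $\M$. Since the action properties are already in hand, it suffices to equip $\M$ with an $\hh$-invariant HHS structure and to identify the resulting top-level hyperbolic space with $\mathcal{D}(V_2)$. Following the program in \cite{behrstock2017} as refined in \cite{Hagen_2020}, I would construct a \emph{factor system} $\fs$ for $\M$: an $\hh$-invariant collection of convex subcomplexes containing $\M$ itself and all hyperplane carriers, closed up to parallelism under the gate operation $\gate{F}{\cdot}$, and with only finitely many $\hh$-orbits of elements meeting any ball of fixed radius.

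The main technical step is verifying closure under gates and the bounded-orbit finiteness axiom. I would begin with the explicit description of hyperplanes of $\M$ coming from \cite{hamHenDehn}, in which each hyperplane is naturally associated to a meridian of $V_2$, and characterize their carriers as convex subcomplexes. I would then analyze iterated gates between hyperplane carriers, controlling intersection patterns of $\hh$-orbits. Because $\M$ has finite cubical dimension and $\hh$ acts cocompactly, iterated gates among hyperplane carriers should stabilize into finitely many $\hh$-orbits, provided one can rule out unbounded chains of properly nested gate subcomplexes. This combinatorial analysis is where the bulk of the work lies, and it is the step I expect to be most delicate; the genus two restriction matters here, since higher genus will produce richer hyperplane interactions that eventually obstruct the construction. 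Once $\fs$ is in place, the machinery of \cite{Hagen_2020} produces an HHS structure on $\M$ indexed by $\fs$ modulo parallelism, with the top-level domain corresponding to $\M$ itself and its associated hyperbolic space being the factored contact graph $\fcontact{\M}$, obtained from the contact graph $\contact{\M}$ by coning off the subgraphs spanned by hyperplanes contained in proper factors. Since $\fs$ is $\hh$-invariant by construction, the HHS structure is equivariant, so $\hh$ is an HHG.

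It then remains to identify $\fcontact{\M}$ with $\mathcal{D}(V_2)$ coarsely and $\hh$-equivariantly. I would define the map by sending each hyperplane of $\M$ to the meridian naturally labeling it in the Hamenst\"adt-Hensel construction; this assignment is $\hh$-equivariant, and hyperplane contact corresponds to disjointness of meridians, so we obtain a coarsely Lipschitz map $\fcontact{\M}\to\mathcal{D}(V_2)$. For a coarse quasi-inverse, I would verify that every meridian is realized (up to uniformly bounded distance in $\mathcal{D}(V_2)$) by some hyperplane of $\M$, and that any pair of disjoint meridians can be lifted to a contact pair of hyperplanes of bounded combinatorial complexity. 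The principal remaining subtlety is that the subgraphs coned off in $\fcontact{\M}$ must be shown to have uniformly bounded diameter in $\mathcal{D}(V_2)$; this reduces to the claim that the hyperplanes meeting a proper factor of $\M$ correspond to meridians lying in a uniformly bounded region of the disk graph. I expect to verify this last step by a direct case analysis enabled by the special combinatorics of genus two, where the types of proper factors produced by the gate closure are limited enough to enumerate.
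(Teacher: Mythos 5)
Your proposal follows the same route as the paper: construct the hyperclosure factor system for the Hamenst\"adt--Hensel cube complex $\M$ (using the genus-two-specific classification of hyperplanes and their gates) to obtain the HHG structure, then identify $\fcontact{\M}$ with $\mathcal{D}(V_2)$ via the correspondence between twist hyperplanes and non-separating meridians. The paper finds it cleaner to map meridians \emph{into} $\contact{\M}$ (rather than hyperplanes to meridians), since switch hyperplanes carry no canonical meridian label, and it sidesteps the cone-vertex issue you raise by observing that the factor system is minimal, so the inclusion $\contact{\M}\hookrightarrow\fcontact{\M}$ is already a quasi-isometry; it also routes through the non-separating disk graph $\mathcal{ND}(V_2)$, which is a dense isometrically embedded subgraph of $\mathcal{D}(V_2)$.
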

Here, coarsely equivariantly means that the quasi-isometry fails to be equivariant by some uniformly bounded distance.

Theorem \ref{thm:hhg-diskgraph} allows us to use the characterization of stable subgroups in the context of HHGs afforded by Abbott, Behrstock, Berlyne, Durham, and Russell \cite{abbott2017largest}, and to replace ``quasi-isometric embedding into the maximal $\delta$-hyperbolic space" with ``quasi-isometric embedding into the disk graph". The characterization of stable subgroups of HHGs also gives us the following additional characterization of stable subgroups of $\hh$.

\begin{cor}\label{cor:unifboundedproj}
Suppose $G$ is a finitely generated subgroup of the handlebody group of genus two, $\hh$. Then the following are equivalent.
\begin{enumerate}
    \item $G$ is a stable subgroup of $\hh$.
    \item $G$ is undistorted in $\hh$ and has uniformly bounded projections.
\end{enumerate}
\end{cor}

Several applications of Theorems \ref{thm:stable-orbit} and \ref{thm:hhg-diskgraph} are discussed in Section \ref{sec:applications}. First, we show that $\mathcal{D}(V_2)$ is a quasi-tree; this follows from the fact that $\mathcal{D}(V_2)$ is quasi-isometric to the factored contact graph, which is a quasi-tree. Theorem \ref{thm:stable-orbit} then implies that the stable subgroups of $\hh$ are all virtually free. Additionally, we show that the Morse boundary of $\hh$ is an $\omega$-Cantor space.

Lastly, we note that we can now fully answer Question C posed Behrstock, Hagen, and Sisto \cite{HHSII} which asks whether handlebody groups are HHGs. For genus $0$ and $1$, the answer is yes because $\mathcal{H}_0$ is trivial and $\mathcal{H}_1 \cong \Z$, (generated by the Dehn twist about the only merdian). Theorem \ref{thm:hhg-diskgraph} tells us that $\hh$ is also an HHG. For $g\geq 3$, we know via Hamenst\"adt and Hensel \cite[Theorem 1.1]{hamHenDehn} that $\hg$ has exponential Dehn function. Since HHGs have quadratic Dehn functions via Behrstock, Hagen, and Sisto \cite[Corollary 7.5]{HHSII}, $\hg$ cannot be an HHG for $g\geq 3$.

\subsection{Outline of the paper}\label{subsec:outline}

In Section \ref{sec:back}, we provide the necessary background on the handlebody group, the disk graph, meridian surgeries, coarse geometry, stability of subgroups, the geometry of $\cat$ cube complexes, and the characterization of $\cat$ cube complexes as HHSs.

In Section \ref{sec:model}, we describe the $\cat$ cube complex $\M$ constructed by Hamenst\"adt and Hensel that will serve as the model for the handlebody group of genus two. This includes a description of the overall structure of $\M$, as well as an in depth account of the two types of (combinatorial) hyperplanes found in $\M$, a classification of the parallelism classes of these hyperplanes, and a discussion of the ways these hyperplanes can contact one another.

Section \ref{sec:fsandup} starts with an explicit characterization of the convex subcomplexes that are included in our factor system. Following this, we use this characterization of the factor system to prove that $\hh$ is an HHG with unbounded products.

In section \ref{sec:contactdisk}, we prove that the factored contact graph of $\M$ is quasi-isometric to the disk graph $\mathcal{D}(V_2)$, and prove Theorems \ref{thm:hhg-diskgraph} and \ref{thm:stable-orbit}.

In Section \ref{sec:applications}, we provide several applications of the main theorems regarding additional properties of the stable subgroups of $\hh$, and a topological characterization of the Morse boundary, as described above.

Finally, in Section \ref{sec:highergenus}, we provide a counterexample to the higher dimensional analogue of Theorem \ref{thm:stable-orbit}.

\subsection{Acknowledgements}\label{subsec:thanks}

The author would like to thank Christopher Leininger for suggesting the problem, for many helpful conversations, for thoroughly reading drafts of the paper, and for general support throughout the writing process. The author would additionally like to thank Sebastian Hensel for answering initial questions regarding what is known about handlebody groups. The author would also like to thank Mark Hagen for clarifying some of the author's questions regarding contact graphs, and for pointing to \cite{Hagen_2020}, which gave a framework to simplify the proof of Proposition \ref{prop:contentsoffs}. The author would like to thank Alessandro Sisto for discussing acylindrical hyperbolicity, and Saul Schleimer for discussing witnesses for the disk graph, which led to the discovery of the counterexample presented in Section \ref{sec:highergenus}. Further, the author would like to thank Jacob Russell for pointing out the applications discussed in Section \ref{sec:applications}. Lastly, the author would like to thank the anonymous referee for their careful reading of the paper and helpful comments.

\section{Background}\label{sec:back}

\subsection{The handlebody group and the disk graph}\label{subsec:handledisk}

A \emph{handlebody} $V_g$ \emph{of genus} $g$ is a three-manifold constructed by attaching $g$ one-handles to the boundary of a three-ball. The boundary $\bd V_g$ is homeomorphic to a surface of genus $g$. We will occasionally refer to a \emph{handlebody with spots}, which is a handlebody $V_g$ along with a collection of disjoint, embedded disks $D_i\subset \bd V_g$, referred to as \emph{spots}. Note that we define the boundary surface of a spotted handlebody to be the complement of the interior of the disks $D_i$, so the boundary of a spotted handlebody is a surface with boundary components.

The \emph{handlebody group}, $\hg$, is the mapping class group of a handlebody; that is, the group of isotopy classes of orientation preserving self-homeomorphisms of $V_g$. We can view the handlebody group as a subgroup of a surface mapping class group via the injective restriction homomorphism \[\iota: \hg \to MCG(\bd V_g).\]
Similarly to surface mapping class groups, handlebody groups are finitely generated, (see \cite{wajnryb}, \cite{suzuki}). We will view the handlebody group $\hg$ as a metric space by fixing some finite generating set and equipping $\hg$ with the word metric. Early investigation of handlebody groups was conducted by Birman \cite{birman1975} and Masur \cite{masur1986}. A survey of properties of handlebody groups can be found in \cite{primerHg}.

A essential curve $\alpha$ on $\bd V_g$ is called a \emph{meridian} if it bounds an embedded disk in $V_g$. A \emph{multimeridian} is a finite collection of pairwise disjoint, pairwise nonhomotopic meridians. Note that whenever we discuss multiple curves in relation to one another, we assume they are in pairwise minimal position. In the setting of handlebody groups, the \emph{disk graph}, denoted $\mathcal{D}(V_g)$, is a graph whose vertices correspond to isotopy classes of meridians, and for which there is an edge between two vertices when the corresponding isotopy classes of meridians have disjoint representatives. The disk graph can be viewed as a subgraph of the curve graph of $\bd V_g$, denoted $\mathcal{C}(\bd V_g)$, which is a graph whose vertices correspond to isotopy classes of essential simple closed curves on $\bd V_g$, and for which there is an edge between two vertices when the corresponding isotopy classes of curves can be made disjoint. Like the curve graph, the disk graph is $\delta$-hyperbolic \cite{MasSchleim}.

Just as there is a natural action of a surface mapping class group on the corresponding curve graph, there is a natural action of the handlebody group $\hg$ on the disk graph $\mathcal{D}(V_g)$. In particular, for an element $h\in \hg$ and a meridian $\alpha$, the image $h(\alpha)$ is also a meridian, (see for instance \cite[Corollary 5.11]{primerHg}). Additionally, because homeomorphisms of a surface preserve disjointness, if $\alpha$ and $\beta$ are two disjoint meridians, $h(\alpha)$ and $h(\beta)$ will also be disjoint. Thus the action of $\hg$ on the vertices $\mathcal{D}(V_g)^{(0)}$ will preserve edges.

One particular class of elements in the handlebody group that will be relevant in this paper are Dehn twists along meridians. Intuitively, a Dehn twist along a meridian $\alpha$, denoted in this paper by $T_{\alpha}$, corresponds to cutting $V_g$ along a disk bounded by $\alpha$, twisting the handle one full twist, and then re-gluing. Clearly this restricts to a Dehn twist in the typical sense on $\bd V_g$.

\subsection{Meridian surgeries}\label{subsec:propertieshandle}

Given a handlebody $V_g$, a \emph{cut system} on $V_g$ is a collection $\alpha_1,\dots,\alpha_g$ of disjoint, non-isotopic meridians such that $\bd V_g -(\alpha_1\cup\cdots \cup \alpha_g)$ is connected. Equivalently, a cut system $\alpha_1,\dots,\alpha_g$ are the boundary curves of a collections of disks $D_1,\dots,D_g\subset V_g$ such that $V_g-(D_1\cup\cdots \cup D_g)$ is a single $3$-ball.

The following lemma demonstrates a way to construct a sequence of cut systems $\{Z_i\}$ such that each $Z_i$ has two fewer intersections with some (multi)meridian $\beta$ than $Z_{i-1}$. The version of this lemma listed below comes from \cite[Proposition 4.1]{hamHenDehn}, though this lemma is well-known and is true in higher genus. Versions of this lemma in higher genus cases can be found, for instance, in \cite[Lemma 1.1]{masur1986} and \cite[Lemma 5.2]{hamenstdt2011geometry}.

\begin{lemma}[{{\cite[Proposition 4.1]{hamHenDehn}}}]\label{lem:surgery}
    Let $Z=\{\alpha_1,\alpha_2\}$ be a cut system on $V_2$, and suppose $\beta$ is some (multi)meridian. Then either $\alpha_1\cup\alpha_2$ is disjoint from $\beta$, or there exists a subarc $b\subset \beta$ with the following properties.
    \begin{enumerate}
        \item The arc $b$ intersects $\alpha_1\cup\alpha_2$ only in its endpoints, and both endpoints lie on the same curve, say $\alpha_1$.
        
        \item The endpoints of $b$ approach $\alpha_1$ from the same side.
        
        \item Let $a,a'$ be the two components of $\alpha_1-b$. Then one of $a\cup b$ or $a'\cup b$ is a meridian, say $a\cup b$. Furthermore, $(a\cup b,\alpha_2)$ is a cut system that we call the \emph{surgery of $Z$ defined by $b$ in the direction of $\beta$}.
        
        \item The surgery defined by $b$ has two fewer intersections with $\beta$ than $Z$.
    \end{enumerate}
\end{lemma}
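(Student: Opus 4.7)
The plan is to run the classical outermost arc surgery argument. First, assume $\beta \cap (\alpha_1 \cup \alpha_2) \neq \emptyset$ (else the first case of the conclusion holds), and fix embedded meridian disks $D_1, D_2 \subset V_2$ with $\bd D_i = \alpha_i$. Pick a component $\beta_0$ of $\beta$ that meets $\alpha_1 \cup \alpha_2$, and let $E \subset V_2$ be an embedded disk with $\bd E = \beta_0$. Put $D_1, D_2, E$ in general position. I would then remove all circle components of $E \cap (D_1 \cup D_2)$: any innermost such circle on $E$ bounds a subdisk of $E$ which, together with a subdisk of $D_1$ or $D_2$ (since those are themselves disks), forms a sphere in $V_2$; irreducibility of the handlebody lets me cut and paste along this sphere to strictly reduce the number of circles, and iterating leaves $E \cap (D_1 \cup D_2)$ as a finite disjoint union of arcs.

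Next I would choose an arc $c \subset E \cap (D_1 \cup D_2)$ that is \emph{outermost on $E$}: among all arcs in the intersection, $c$ cobounds a subdisk $E' \subset E$ with a subarc $b \subset \bd E \subset \beta_0$ such that the interior of $E'$ is disjoint from $D_1 \cup D_2$. The arc $c$ lies in exactly one of $D_1, D_2$; after relabeling say $c \subset D_1$, so $b$ has both endpoints on $\alpha_1$ and, by outermost-ness, $b$ is disjoint from $\alpha_1 \cup \alpha_2$ except at those endpoints, establishing (1). Because $E'$ meets $D_1$ only in $c$, the disk $E'$ sits on one side of $D_1$ in $V_2$, which forces both endpoints of $b$ to approach $\alpha_1$ from that same side, giving (2). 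Now $c$ cuts $D_1$ into two subdisks $D_1', D_1''$ with boundaries $c \cup a$ and $c \cup a'$, where $a, a'$ are the two components of $\alpha_1$ minus the endpoints of $b$. Gluing along $c$ produces two embedded disks $D_1' \cup E'$ and $D_1'' \cup E'$ in $V_2$ with boundaries $a \cup b$ and $a' \cup b$ respectively; if both these boundary curves were inessential on $\bd V_2$, gluing the disks they bound on $\bd V_2$ along $b$ would express $\alpha_1$ as trivial, a contradiction. Hence at least one is a meridian, say $a \cup b$, giving (3). For (4), note that in minimal position $|(a \cup b) \cap \beta| = |a \cap \beta|$ (since $b \subset \beta$ can be isotoped off $\beta$), while $|\alpha_1 \cap \beta| = |a \cap \beta| + |a' \cap \beta| + 2$ because the two endpoints of $b$ on $\alpha_1$ are among the intersection points; so the new cut system has at least two fewer intersections with $\beta$, and choosing $c$ so that $E'$ contains no other arcs of $E \cap \beta$ in its boundary gives exactly two fewer.

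The main obstacle is verifying that the pair $(a \cup b, \alpha_2)$ is actually a cut system, rather than merely two disjoint meridians. Disjointness of $a \cup b$ from $\alpha_2$ is immediate from the outermost property ($a \subset \alpha_1$, and $b$ avoids $\alpha_2$). The genuine work is ruling out that $a \cup b$ is isotopic to $\alpha_2$, or that the surgered pair separates $\bd V_2$; this is where one must use irreducibility of $V_2$ and the hypothesis that $\{\alpha_1, \alpha_2\}$ was already a cut system, together with a case analysis that, if $a \cup b$ fails to yield a cut system, swaps to $a' \cup b$ (and verifies one of the two choices always works). Everything else is bookkeeping around the outermost arc; the topological content is concentrated in the innermost circle reduction and this final cut-system verification.
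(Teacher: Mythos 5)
The paper does not actually prove this lemma: it cites it verbatim from \cite[Proposition 4.1]{hamHenDehn} and points to \cite[Lemma 1.1]{masur1986} and \cite[Lemma 5.2]{hamenstdt2011geometry} as the higher-genus antecedents. So there is no internal proof to compare against. Your outermost-arc/innermost-circle strategy is in fact the standard approach used in those references, and your treatment of properties (1) and (2) is correct, given that one may and should take $D_1$ and $D_2$ disjoint (which you use implicitly when you say the arc $c$ lies in exactly one of them).

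That said, there are two genuine gaps. First, you explicitly punt on the claim that $(a\cup b,\alpha_2)$ is a cut system. This is not bookkeeping; it is the topological core of (3). You need to rule out that the chosen surgered curve is inessential, separating on $\bd V_2$, or isotopic to $\alpha_2$, and you need to show that at least one of the two candidates $a\cup b$, $a'\cup b$ always works. The homology relation $[a\cup b]\pm[a'\cup b]=\pm[\alpha_1]$ in $H_1(\bd V_2)$ is the key input (it prevents both surgered curves from being null-homologous or from both being homologous to $\alpha_2$), but then one still has to pass from a pair of disjoint, non-isotopic, non-separating meridians to a cut system; in genus two that step is manageable but must be argued, e.g.\ by cutting $\bd V_2$ along $\alpha_1\cup\alpha_2$ and analyzing which boundary circles of the resulting four-holed sphere the arc $b$ separates. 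Merely stating that ``one must use irreducibility and a case analysis'' leaves the proof incomplete.

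Second, your argument for ``exactly two fewer'' in (4) does not parse. You write ``choosing $c$ so that $E'$ contains no other arcs of $E\cap\beta$ in its boundary,'' but $E\cap\beta=\bd E$ (the disk $E$ meets $\bd V_2$ only in its boundary circle), so this condition is vacuous. Your count correctly shows $|\alpha_1\cap\beta|-|(a\cup b)\cap\beta|=|\mathrm{int}(a')\cap\beta|+2$, using property (2) to smooth the two corners without creating new intersections, which gives \emph{at least} two fewer. Getting exactly two requires $\mathrm{int}(a')\cap\beta=\emptyset$, and choosing $c$ outermost on $E$ (needed for (1)) does not arrange that. Either the conclusion should be read as ``at least two fewer'' (which suffices for the application and is what the cited sources actually guarantee), or a different selection of the surgery arc is needed; as written, the justification does not support the exact count.
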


Given an initial cut system $Z$ and (multi)meridian $\beta$, Lemma \ref{lem:surgery} allows us to construct a sequence $\{Z_i\}_{i=1}^n$ of cut systems such that $Z_1=Z$, $Z_i$ is a surgery of $Z_{i-1}$ in the direction of $\beta$ for $i\in(1,n]$, and $Z_n$ is disjoint from $\beta$. Furthermore, consecutive cut systems in $\{Z_i\}_{i=1}^n$ have no transverse intersections. We call a sequence of cut systems constructed in this way a \emph{surgery sequence starting at $Z$ in the direction of $\beta$.}

\subsection{Geometry of \texorpdfstring{$\cat$}{CAT(0)} cube complexes}\label{subsec:catHHS}

An $n$-\emph{cube} for $0\leq n < \infty$ is a copy of the Euclidean cube $[-\frac{1}{2},\frac{1}{2}]^n$. A \emph{cube complex} is a cell complex in which the $n$-cells are $n$-cubes, and in which the attaching maps are isometries. A cube complex $\mathcal{X}$ is $\cat$ if every triangle in $\mathcal{X}$ is at least as thin as a comparison triangle in Euclidean space. For a more detailed definition and other properties of $\cat$ cube complexes, see for example \cite[Sections I.7 and II.1]{BH}. For the remainder of this section, let $\mathcal{X}$ be a $\cat$ cube complex.

A \emph{midcube} of a cube $c$ is a subspace obtained by restricting exactly one coordinate of $c$ to $0$. A \emph{hyperplane} $H$ is a connected union of midcubes of $\mathcal{X}$ such that for any finite dimensional cube $c$ of $\mathcal{X}$, either $H\cap c = \emptyset$ or $H \cap c$ is a midcube. Any hyperplane $H$ in $\mathcal{X}$ is \emph{separating}, meaning $\mathcal{X} - H$ has exactly two components called \emph{half-spaces} \cite{sageev}. The \emph{carrier} $N(H)$ of a hyperplane $H$ is the union of cubes in $\mathcal{X}$ which have non-empty intersection with $H$. There is a cubical isometric embedding $H\times [-\frac{1}{2},\frac{1}{2}] \simeq N(H) \hookrightarrow \mathcal{X}$, and we denote by $H^{\pm}$ the images of $H\times \{\pm \frac{1}{2}\}$. We call each of these $H^{\pm}$ \emph{combinatorial hyperplanes.}

A subcomplex $F$ of $\mathcal{X}$ is \emph{convex} if $F^{(1)}$ is metrically convex in $\mathcal{X}^{(1)}$, using the induced path metric, and if every cube whose $0$-skeleton is contained in $F$ is also contained in $F$. This notion of convexity agrees with the $\cat$-metric convexity for subcomplexes, though not for arbitrary subspaces of $\mathcal{X}$ \cite{Hagen_2020}.

We say that two convex subcomplexes $F_1$ and $F_2$ are \emph{parallel} if for every hyperplane $H$ in $\mathcal{X}$, $F_1\cap H \neq \emptyset$ if and only if $F_2 \cap H \neq \emptyset$. Note that parallelism is an equivalence relation, and we will denote the parallelism class of a convex subcomplex $F$ via $[F]$. Also note that combinatorial hyperplanes $H^+$ and $H^-$ are convex, and are always parallel to one another. We say that a convex subcomplex $F_1$ is \emph{parallel into} a convex subcomplex $F_2$ if for every hyperplane $H$ in $\mathcal{X}$, if $F_1\cap H \neq \emptyset$, then $F_2 \cap H \neq \emptyset.$ Occasionally it will be useful to talk about hyperplanes $H_1$ and $H_2$ being parallel (into). By this we mean that the associated combinatorial hyperplanes $H_1^{\pm}$ and $H_2^{\pm}$ are parallel (into). The following lemma provides a useful characterization of parallel subcomplexes.

\begin{lemma}[{{\cite[Lemma 2.4]{behrstock2017}}}]\label{lem:HHS-2.4}
 Let $F,F'\subset \mathcal{X}$ be convex subcomplexes. The following are equivalent:
 \begin{enumerate}
     \item $F$ and $F'$ are parallel.
     \item There is a cubical isometric embedding $F\times [0,a] \to \mathcal{X}$ whose restrictions to $F\times \{0\}$ and $F\times \{a\}$ factor as $F\times \{0\} \cong F\hookrightarrow \mathcal{X}$ and $F\times \{a\} \cong F'\hookrightarrow \mathcal{X}$, respectively, and for every vertex $x\in F$, $\{x\}\times[0,a]$ is a combinatorial geodesic segment crossing exactly those hyperplanes that separate $F$ from $F'$.
 \end{enumerate}
Hence, there exists a convex subcomplex $E_F$ such that there is a cubical embedding $F\times E_F \to \mathcal{X}$ with convex image such that for each $F'$ in the parallelism class of $F$, there exists a $0$-cube $e\in E_F$ such that $F\times \{e\}\to \mathcal{X}$ factors as $F\times \{e\} \xrightarrow{id} F' \hookrightarrow F$.
\end{lemma}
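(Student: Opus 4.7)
The plan is to prove the equivalence of (1) and (2), then assemble the parameter space $E_F$. For the easy direction $(2) \Rightarrow (1)$, I would observe that the hyperplanes of $\mathcal{X}$ meeting the image of an embedding $\phi \colon F \times [0,a] \to \mathcal{X}$ with the stated properties split into vertical hyperplanes $\phi(H' \times [0,a])$ for $H'$ a hyperplane of $F$, and horizontal hyperplanes $\phi(F \times \{k+\tfrac{1}{2}\})$ for $0 \le k < a$. A hyperplane of $\mathcal{X}$ crosses $F \cong \phi(F \times \{0\})$ precisely when it is vertical, which happens precisely when it crosses $F' \cong \phi(F \times \{a\})$, so $F$ and $F'$ are parallel.

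For the harder direction $(1) \Rightarrow (2)$, the strategy is to use the combinatorial gate projection $\gate{F'}{\cdot} \colon \mathcal{X} \to F'$. Partition the hyperplanes of $\mathcal{X}$ into the set $\mathcal{H}_{\parallel}$ of hyperplanes crossing both $F$ and $F'$ (which by parallelism coincides with the set crossing $F$ and with the set crossing $F'$), the set $\mathcal{H}_{\perp}$ of hyperplanes separating $F$ from $F'$, and the rest. I would first argue that $\gate{F'}{\cdot}$ restricts to a cubical isomorphism $F \to F'$: adjacent vertices $x, x' \in F$ are separated by a single hyperplane $H \in \mathcal{H}_{\parallel}$, which also crosses $F'$, and the gate map preserves the side of any hyperplane not crossed by a geodesic to $F'$, so $\gate{F'}{x}$ and $\gate{F'}{x'}$ remain separated by exactly $H$ and are adjacent in $F'$. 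For each vertex $x \in F$, let $\gamma_x$ be a combinatorial geodesic from $x$ to $\gate{F'}{x}$; its hyperplane set is exactly $\mathcal{H}_{\perp}$, so every $\gamma_x$ has the same length $a = |\mathcal{H}_{\perp}|$. The main obstacle is to assemble $F \cup \bigcup_x \gamma_x$ into a convex subcomplex of $\mathcal{X}$ isomorphic to the cubing $F \times [0,a]$. The crucial input is the transversality between the two hyperplane families: any $H_1 \in \mathcal{H}_{\parallel}$ crosses any $H_2 \in \mathcal{H}_{\perp}$, because $H_1 \cap F$ lies on the $F$-side of $H_2$ while $H_1 \cap F'$ lies on the opposite side. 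Combined with the standard Sageev-type fact that a convex subcomplex whose hyperplane set is a disjoint union of two pairwise-transverse families is a cubical product, this yields the desired isometric embedding and verifies the geodesic property $\{x\} \times [0,a]$.

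For the closing statement, let $\mathcal{F}$ denote the parallelism class of $F$, fix a $0$-cube $x_0 \in F$, and define $E_F$ to be the maximal convex subcomplex containing $x_0$ whose hyperplanes are all disjoint from $F$; equivalently, $E_F$ is the convex hull of $\{\gate{F'}{x_0} : F' \in \mathcal{F}\}$. Applying the first part of the lemma to each pair $F, F' \in \mathcal{F}$ produces canonical cubical isomorphisms $F \to F'$, and these glue into a single cubical isometric embedding $F \times E_F \to \mathcal{X}$ with convex image, under which each $F' \in \mathcal{F}$ is realized as $F \times \{e\}$ for the unique $0$-cube $e = \gate{F'}{x_0} \in E_F$.
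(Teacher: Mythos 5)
The paper only cites this lemma from Behrstock--Hagen--Sisto and does not prove it, so there is no in-paper proof to compare against; I am assessing your argument on its own terms.

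Your $(2)\Rightarrow(1)$ direction is fine, and your key observation for $(1)\Rightarrow(2)$ --- that every hyperplane in $\mathcal{H}_{\parallel}$ crosses every hyperplane in $\mathcal{H}_{\perp}$, because the former has endpoints in $F$ and $F'$ on opposite sides of the latter --- is exactly the right input. But the step where you ``assemble $F\cup\bigcup_x\gamma_x$ into a convex subcomplex of $\mathcal{X}$ isomorphic to the cubing $F\times[0,a]$'' has a genuine gap, in two ways. First, the union $F\cup\bigcup_x\gamma_x$ need not even be a subcomplex unless the geodesics $\gamma_x$ are chosen coherently across adjacent vertices of $F$, which you have not arranged. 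Second, and more seriously, the image of $F\times[0,a]$ is essentially never convex in $\mathcal{X}$: take $\mathcal{X}=[0,1]^2$, $F=\{(0,0)\}$, $F'=\{(1,1)\}$; here $\mathcal{H}_{\perp}$ consists of the two crossing hyperplanes, $a=2$, and any length-two path from $(0,0)$ to $(1,1)$ is not convex. In exactly the same example, the ``Sageev-type'' product decomposition of the convex hull of $F\cup F'$ yields $F\times B$ with $B$ the whole square, not $F\times[0,a]$; the decomposition theorem controls the second factor only up to being the cube complex dual to $\mathcal{H}_{\perp}$, which is an interval precisely when $\mathcal{H}_{\perp}$ is a chain under separation.

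The repair is to insert the intermediate object explicitly: the convex hull $J$ of $F\cup F'$ has hyperplane set $\mathcal{H}_{\parallel}\sqcup\mathcal{H}_{\perp}$ (you should check there is nothing else), your transversality claim lets you decompose $J\cong F\times B$ with $B$ convex and dual to $\mathcal{H}_{\perp}$, and then $[0,a]$ should be taken to be a combinatorial \emph{geodesic in $B$} from the $F$-vertex to the $F'$-vertex; restricting the isomorphism to $F\times[0,a]$ gives the desired cubical isometric embedding, whose image is not claimed to be convex. Note this also fixes the coherence of the $\gamma_x$ automatically, since they all come from the single chosen geodesic in $B$. Your construction of $E_F$ at the end is consistent with this corrected picture (with $E_F=B$ when you take $\mathcal{F}$ to be the whole parallelism class), but the two characterizations you offer for $E_F$ --- maximal convex subcomplex through $x_0$ with hyperplanes disjoint from $F$, versus convex hull of the gates $\gate{F'}{x_0}$ --- deserve a one-line justification that they agree, as it is not immediate.
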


Given any subset $A\subset \mathcal{X}$ and a hyperplane $H$ in $\mathcal{X}$, we will say $H$ \emph{crosses} $A$ if $A\cap H \neq \emptyset$. Additionally, two hyperplanes $H_1$ and $H_2$ are said to \emph{osculate} if $H_1 \cap H_2 = \emptyset$ but $N(H_1) \cap N(H_2) \neq \emptyset.$

Given a convex subcomplex $F\subset \mathcal{X},$ we define the \emph{gate map} $\mathfrak{g}_F:\mathcal{X}^{(0)}\to F^{(0)}$ between $0$-skeleta to be the map such that $\mathfrak{g}_F(x)$ is the unique closest $0$-cube in $F^{(0)}$ to $x$. It is proven in \cite{behrstock2017} that this map extends to a cubical map $\mathfrak{g}_F:\mathcal{X}\to F$ such that an $n$-cube $c$ is collapsed to the unique $m$-cube whose $0$-cubes are the images of the $0$-cubes of $c$ under the gate map, where $0\leq m \leq n$,. We include here a lemma from \cite{Hagen_2020} and another lemma from \cite{behrstock2017} regarding the gate map that we will make use of throughout the paper.

\begin{lemma}[{{\cite[Lemma 1.5]{Hagen_2020}}}]\label{lem:Hagen-1.5}
    For any convex subcomplexes $F,F'\subset \mathcal{X}$, the hyperplanes crossing $\gate{F}{F'}$ are precisely the hyperplanes crossing both $F$ and $F'$.
\end{lemma}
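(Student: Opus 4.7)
The plan is to prove both containments separately. The easy containment says: if a hyperplane crosses $\gate{F}{F'}$ then it crosses both $F$ and $F'$; the other containment says: if a hyperplane crosses both $F$ and $F'$, then it crosses $\gate{F}{F'}$. The single technical tool I will use repeatedly is the standard fact (implicit in the description of the gate map recalled above) that, for any vertex $x\in\mathcal{X}$, the hyperplanes separating $x$ from $\gate{F}{x}$ are exactly those that separate $x$ from $F$, i.e.\ those that do not cross $F$. From this, parity arguments along concatenated combinatorial paths control which hyperplanes separate pairs of gate images.

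For the containment ``crosses both $F$ and $F'$ implies crosses $\gate{F}{F'}$'': suppose a hyperplane $H$ crosses both $F$ and $F'$. Choose vertices $x_1,x_2\in F'$ on opposite sides of $H$, and look at the path $x_1 \to \gate{F}{x_1} \to \gate{F}{x_2} \to x_2$. Because $H$ crosses $F$, it cannot separate $x_i$ from $\gate{F}{x_i}$ for either $i$ by the standard fact. Since $H$ separates the endpoints of the whole path and contributes nothing to the first and third segments, it must separate $\gate{F}{x_1}$ from $\gate{F}{x_2}$. Both of these vertices lie in $\gate{F}{F'}$, which is connected as the image of the connected subcomplex $F'$ under the cubical map $\gate{F}{\cdot}$; so a combinatorial path inside $\gate{F}{F'}$ between them must contain an edge dual to $H$, giving $H\cap \gate{F}{F'}\neq\emptyset$.

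For the containment ``crosses $\gate{F}{F'}$ implies crosses both $F$ and $F'$'': the containment in $F$ is immediate because $\gate{F}{F'}\subset F$. For the other, let $H$ cross $\gate{F}{F'}$ via an edge $e\subset \gate{F}{F'}$ dual to $H$. Since the gate map is cubical and collapses each cube $c$ of $F'$ onto the cube spanned by $\gate{F}{c^{(0)}}$, the edge $e$ is the image of some cube $c\subset F'$, and I can select two vertices $v_1,v_2\in c\subset F'$ whose gates are the two endpoints of $e$. Running the same parity argument on the path $v_1 \to \gate{F}{v_1} \to \gate{F}{v_2} \to v_2$, and using once more that $H$ crosses $F$ so cannot separate $v_i$ from $\gate{F}{v_i}$, we conclude $H$ separates $v_1$ from $v_2$; since both lie in $F'$, any combinatorial path between them in $F'$ produces an edge dual to $H$, so $H$ crosses $F'$.

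The one subtlety I expect to require a bit of care is that the gate map may strictly collapse cubes, so an edge of $\gate{F}{F'}$ need not be the gate-image of an edge of $F'$; however, the explicit description of the cubical gate map as collapsing an $n$-cube onto the cube spanned by the images of its $0$-skeleton is exactly what lets me pull back the endpoints of $e$ to two vertices of a common cube in $F'$. Beyond this, everything reduces to the single characterization of hyperplanes separating a vertex from its gate, plus an elementary parity/connectedness argument, so I do not anticipate any further genuine obstacle.
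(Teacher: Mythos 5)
The paper does not prove this lemma; it cites it verbatim from Hagen's \emph{Large facing tuples and a strengthened sector lemma} (the reference \cite{Hagen_2020}), so there is no in-paper argument to compare against. Your proof is correct and is the natural one: the only real ingredient is the characterization of the hyperplanes separating a vertex $x$ from $\gate{F}{x}$ as exactly those separating $x$ from $F$, combined with a parity argument along the concatenation $v \to \gate{F}{v} \to \gate{F}{w} \to w$, and that is how this kind of statement is typically established.

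Two small remarks. First, the parenthetical ``i.e.\ those that do not cross $F$'' is stated too strongly: the hyperplanes separating $x$ from $\gate{F}{x}$ are those separating $x$ from $F$, which is a strictly smaller class than the class of hyperplanes not crossing $F$. You never actually use the false converse --- everywhere in the argument you only invoke the correct direction ``$H$ crosses $F$ $\Rightarrow$ $H$ does not separate $x$ from $F$ $\Rightarrow$ $H$ does not separate $x$ from $\gate{F}{x}$'' --- so the logic is sound, but the phrasing should be tightened. Second, the ``pull back $e$ to two vertices of a common cube in $F'$'' step is more than you need. By definition of the gate map, the $0$-skeleton of $\gate{F}{F'}$ is precisely $\mathfrak{g}_F(F'^{(0)})$, so for the two endpoints $a,b$ of the edge $e$ dual to $H$ you may choose arbitrary preimages $v_1,v_2\in F'^{(0)}$ with $\gate{F}{v_i}$ equal to $a,b$; you do not need them to span a cube, only to lie in the connected subcomplex $F'$. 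The parity argument then runs exactly as you wrote, without the aside about the gate map collapsing cubes.
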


\begin{lemma}[{{\cite[Lemma 2.6]{behrstock2017}}}]\label{lem:HHS-2.6}
If $F,F'\subset \mathcal{X}$ are convex subcomplexes, then $\gate{F}{F'}$ and $\gate{F'}{F}$ are parallel subcomplexes. Moreover, if $F\cap F' \neq \emptyset$, then $\gate{F}{F'} = \gate{F'}{F} = F\cap F'$.
\end{lemma}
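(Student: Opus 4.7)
The plan is to handle the two claims separately, establishing parallelism first via Lemma \ref{lem:Hagen-1.5} and then addressing the intersection case using a hyperplane-separation argument. For the first claim, I would note that Lemma \ref{lem:Hagen-1.5} says the set of hyperplanes crossing $\gate{F}{F'}$ equals the set of hyperplanes crossing both $F$ and $F'$; applying the same lemma with the roles of $F$ and $F'$ swapped, the set of hyperplanes crossing $\gate{F'}{F}$ is also the set of hyperplanes crossing both $F$ and $F'$. Since $\gate{F}{F'}$ and $\gate{F'}{F}$ meet exactly the same hyperplanes, they are parallel by the definition of parallelism.

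For the moreover clause, assume $F\cap F' \neq \emptyset$ and pick any vertex $p \in F\cap F'$. Since $\mathfrak{g}_F$ restricts to the identity on $F^{(0)}$, we have $\mathfrak{g}_F(p)=p$, and because $p \in F'$ this places $p \in \mathfrak{g}_F(F') = \gate{F}{F'}$. Thus $F\cap F' \subseteq \gate{F}{F'}$, and symmetrically $F \cap F' \subseteq \gate{F'}{F}$.

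The main step, and the one I expect to require the most care, is the reverse containment $\gate{F}{F'} \subseteq F\cap F'$. Since $\gate{F}{F'} \subseteq F$ already holds, it suffices to show $\gate{F}{F'} \subseteq F'$. Let $q \in \gate{F}{F'}$ be a vertex, and suppose for contradiction that some hyperplane $H$ separates $q$ from $F'$; that is, $H$ does not cross $F'$, while $q$ and every point of $F'$ lie in distinct halfspaces of $H$. Choosing the vertex $p \in F\cap F' \subseteq \gate{F}{F'}$ from the previous step, we see $H$ separates $q$ and $p$, both of which lie in the convex subcomplex $\gate{F}{F'}$; consequently $H$ crosses $\gate{F}{F'}$. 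But then Lemma \ref{lem:Hagen-1.5} forces $H$ to cross $F'$, contradicting the assumption on $H$. Hence no hyperplane separates $q$ from $F'$, so $q \in F'$, giving $\gate{F}{F'} \subseteq F \cap F'$ and thus equality.

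Combining the two inclusions and applying the same argument with $F$ and $F'$ interchanged yields $\gate{F}{F'} = \gate{F'}{F} = F \cap F'$, completing the proof. The conceptual content is packaged entirely by Lemma \ref{lem:Hagen-1.5} together with convexity of gate images; the only subtle point is the hyperplane-separation step, where one must remember that a hyperplane separating two vertices of a convex subcomplex necessarily crosses that subcomplex.
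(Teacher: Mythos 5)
Your proof is correct. Note that the paper does not actually prove this lemma; it is quoted verbatim from Behrstock--Hagen--Sisto \cite{behrstock2017}, so there is no in-paper argument to compare against. That said, your argument is self-contained and sound: the parallelism claim is an immediate double application of Lemma~\ref{lem:Hagen-1.5}, since both $\gate{F}{F'}$ and $\gate{F'}{F}$ cross precisely the hyperplanes crossing both $F$ and $F'$, hence meet the same hyperplanes and are parallel by definition. For the ``moreover'' clause, the inclusion $F\cap F'\subseteq\gate{F}{F'}$ via the fact that $\mathfrak{g}_F$ restricts to the identity on $F^{(0)}$ is right, and the reverse inclusion is handled correctly: if $q\in\gate{F}{F'}$ but $q\notin F'$, any hyperplane separating $q$ from $F'$ (which exists, e.g.\ any hyperplane separating $q$ from $\mathfrak{g}_{F'}(q)$) also separates the two vertices $q$ and $p\in F\cap F'$ of the convex subcomplex $\gate{F}{F'}$, hence crosses $\gate{F}{F'}$, hence crosses $F'$ by Lemma~\ref{lem:Hagen-1.5}, a contradiction. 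The one fact you use implicitly that is worth flagging explicitly is that $\gate{F}{F'}$ is itself a convex subcomplex (so that a hyperplane separating two of its vertices must cross it); this is standard for gate images and is taken for granted throughout \cite{behrstock2017}, so the appeal is legitimate.
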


Let $F\subset \mathcal{X}$ be a convex subcomplex; $F$ is a $\cat$ cube complex. The \emph{contact graph} $\contact{F}$ is a $\delta$-hyperbolic graph, (actually a quasi-tree), originally defined by Hagen \cite{Hagen_2013}. Each vertex in the contact graph corresponds to a hyperplane in $F$, and there is an edge between two vertices if the carriers of the corresponding hyperplanes have non-empty intersection, ie if the hyperplanes either cross or osculate. If $K\subset F$ is a convex subcomplex, then the hyperplanes of $K$ can be described as $K\cap H$ where $H$ is a hyperplane of $F$; this is because every hyperplane is determined by a single midcube contained in it. The definition of convexity implies that the inclusion $K\hookrightarrow F$ induces an injective graph homomorphism $\contact{K}\to \contact{F}$ sending a hyperplane $H\cap K$ of $K$ to the hyperplane $H$ of $F$. Furthermore, via the definition of parallel subcomplexes, if $K_1,K_2\subset F$ are parallel, convex subcomplexes, then $\contact{K_1}$ and $\contact{K_2}$ are the same subcomplexes of $\contact{F}$.

\subsection{\texorpdfstring{$\cat$}{CAT(0)} cube complexes as hierarchically hyperbolic spaces}\label{subsec:HHS}

Here we describe what is needed in order to show that a $\cat$ cube complex $\mathcal{X}$ is a hierarchically hyperbolic space. Because this paper is only concerned with a specific $\cat$ cube complex, we omit the complete definition of an HHS and refer the reader to \cite{behrstock2017} for full details.

For the remainder of this subsection, let $\mathcal{X}$ denote some arbitrary $\cat$ cube complex. In order to show that $\mathcal{X}$ is an HHS, one must demonstrate the existence of a \emph{factor system}.

\begin{definition}[{{\cite[Definition 8.1]{behrstock2017}}}]\label{def:factorsys}
    A \emph{factor system} $\fs$ is a collection of non-empty, convex subcomplexes of $\mathcal{X}$ satisfying the following properties:
\begin{enumerate}
    \item $\mathcal{X}\in \fs$.
    \item There exists a number $N \geq 1$ such that every $x\in\mathcal{X}^{(0)}$ is contained in at most $N$ subcomplexes in $\fs$. We refer to this as the \emph{finite multiplicity} property.
    \item If $F$ is a non-trivial subcomplex of $\mathcal{X}$ that is parallel to a combinatorial hyperplane of $\mathcal{X}$, then $F\in\fs$.
    \item There is some number $\xi \geq 0$ such that if $F_1,F_2\in\fs$ and $\operatorname{diam}(\mathfrak{g}_{F_1}(F_2)) \geq \xi$, then $\mathfrak{g}_{F_1}(F_2)\in\fs$.
\end{enumerate}
\end{definition}

Given a factor system $\fs$ for $\mathcal{X}$ and a subcomplex $F\in\fs,$ \cite[Definition 8.14]{behrstock2017} defines the \emph{factored contact graph} $\fcontact{F}$, which is constructed from the contact graph $\contact{F}$ in the following way. Let $F'\in \fs-F$ such that $F'\subset F$ and such that $\operatorname{diam}(F')\geq \xi$ or $F'$ is parallel to a combinatorial hyperplane of $\mathcal{X}$, (or both). Given a subcomplex $F'$ with these properties, we add one vertex $v_{[F']}$ to $\contact{F}$ corresponding to parallelism class of $F'$, and we connect $v_{[F']}$ by an edge to each vertex in $\contact{F'}\subset \contact{F}$. This means that if $F''$ is parallel to $F'$, then we only add one vertex $v_{[F']} = v_{[F'']}$ to $\contact{F}$. Note that the contact graph $\contact{F}$ is an induced subgraph of $\fcontact{F}$, ie $\contact{F}^{(0)} \subset \fcontact{F}^{(0)}$ and the edges of $\contact{F}$ consist of all edges from $\fcontact{F}$ whose endpoints are in $\contact{F}^{(0)}$.

If $\mathcal{X}$ contains a factor system $\fs$, then via \cite[Remark 13.2]{behrstock2017}, $\mathcal{X}$ is an HHS whose set of \emph{domains} $\is$, (sometimes called an \emph{index set}), is a subset of $\fs$ containing one representative $F\in\fs$ of each parallelism class in $\fs$, (except single points). The set $\is$ is equipped with a partial order $\sqsubseteq$ such that $F_1 \sqsubseteq F_2$ if and only if $F_1$ is parallel into $F_2$; the maximal element is the cube complex $\mathcal{X}$ itself. The $\delta$-hyperbolic space associated to a subcomplex $F$ is the factored contact graph $\fcontact{F}$. If $G$ is a group acting properly, cocompactly, and by isometries on $\mathcal{X}$, then $(G,\is)$ is a hierarchically hyperbolic group. Note that this is not the definition of an HHG, but rather a specific example of an HHG. For the full definition, see for instance \cite[Definition 1.21]{HHSII}.

One way to construct a factor system for $\mathcal{X}$ is via the hyperclosure.
\begin{definition}[{{\cite[Definition 1.14]{Hagen_2020}}}]\label{def:hyperclosure}
    The \emph{hyperclosure of $\mathcal{X}$} is the intersection $\fs$ of all sets $\mathfrak{G}$ of convex subcomplexes of $\mathcal{X}$ that satisfy the following properties:
\begin{enumerate}
    \item $\mathcal{X}\in\mathfrak{G}$.
    \item If $C$ is a combinatorial hyperplane of $\mathcal{X}$, then $C\in \mathfrak{G}$.
    \item If $F,F'\in\mathfrak{G}$, then $\gate{F}{F'}\in\mathfrak{G}$.
    \item If $F\in\mathfrak{G}$ and $F'$ is parallel to $F$, then $F'\in\mathfrak{G}$.
\end{enumerate}
\end{definition}
If the hyperclosure has the finite multiplicity property, then $\fs$ is a factor system in the sense of Definition \ref{def:factorsys}. This is clear because properties (1), (2), and (4) of Definition \ref{def:hyperclosure} satisfy properties (1) and (3) of Definition \ref{def:factorsys}, and property (3) of Definition \ref{def:hyperclosure} satisfies property (4) of Definition \ref{def:factorsys} with $\xi = 0$.

The following lemma will be useful in our analysis of the hyperclosure.
\begin{lemma}[{{\cite[Lemma 2.2]{Hagen_2020}}}]\label{lem:Hagen-2.2}
    Let $\fs$ be the hyperclosure of $\mathcal{X}$. Let $\fs_0 = \{\mathcal{X}\}$, and let $\fs_n$, for $n\geq1$, be the subset of $\fs$ consisting of subcomplexes that can be written in the form $\gate{C}{F}$ where $C$ is a combinatorial hyperplane of $\mathcal{X}$ and $F\in\fs_{n-1}$. Then $\fs=\cup_{n\geq 0} \fs_n.$
\end{lemma}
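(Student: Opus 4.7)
The plan is to prove $\fs = \bigcup_{n\geq 0} \fs_n$ by two inclusions. The inclusion $\bigcup_{n\geq 0} \fs_n \subseteq \fs$ is immediate by induction on $n$: $\fs_0 = \{\mathcal{X}\} \subseteq \fs$ by axiom (1) of Definition \ref{def:hyperclosure}, and if $\fs_{n-1} \subseteq \fs$, then for any $C \in \mathfrak{C}$ and $F \in \fs_{n-1}$, axiom (2) gives $C \in \fs$ and axiom (3) gives $\gate{C}{F} \in \fs$, so $\fs_n \subseteq \fs$.

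For the reverse inclusion, I will invoke the minimality of the hyperclosure as an intersection: it suffices to verify that $\mathfrak{G} := \bigcup_{n\geq 0} \fs_n$ itself satisfies axioms (1)--(4) of Definition \ref{def:hyperclosure}. Axiom (1) is immediate, and axiom (2) holds because $\gate{C}{\mathcal{X}} = C \cap \mathcal{X} = C$ by Lemma \ref{lem:HHS-2.6}, placing every combinatorial hyperplane in $\fs_1$.

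Axiom (4), closure of $\mathfrak{G}$ under parallelism, I handle next. The plan is to argue that any parallel copy $F'$ of a subcomplex $\gate{C}{F^*} \in \fs_n$ can itself be realized as $\gate{C'}{F^*{}'}$ with $C' \in \mathfrak{C}$ parallel to $C$ and $F^*{}' \in \fs_{n-1}$ parallel to $F^*$, with $C'$ and $F^*{}'$ chosen in the appropriate translate of $\mathcal{X}$ determined by the parallelism product of Lemma \ref{lem:HHS-2.4}; Lemma \ref{lem:Hagen-1.5} will then guarantee that the resulting $\gate{C'}{F^*{}'}$ is crossed by the same hyperplanes as $F'$. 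I expect this to be the main obstacle, since one must track exactly which hyperplanes separate the various parallel copies in order to ensure that the gate lands on $F'$ on the nose rather than on some other representative of its parallelism class; the inductive hypothesis at level $n-1$ is used to produce $F^*{}'$ within $\mathfrak{G}$.

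Axiom (3), closure of $\mathfrak{G}$ under gate projections, is then handled by the following key identity, proved by a direct two-step application of Lemma \ref{lem:Hagen-1.5}: for any $C \in \mathfrak{C}$ and convex subcomplexes $F^*, F' \subseteq \mathcal{X}$,
\[
\gate{\gate{C}{F^*}}{F'} \quad \text{is parallel to} \quad \gate{C}{\gate{F^*}{F'}},
\]
because both are crossed by precisely those hyperplanes crossing all three of $C$, $F^*$, and $F'$. Granted this identity, axiom (3) follows by induction on $n$: for $F \in \fs_n$ written as $F = \gate{C}{F^*}$ with $F^* \in \fs_{n-1}$, and any $F' \in \mathfrak{G}$, the inductive hypothesis applied to $F^*$ places $\gate{F^*}{F'}$ in some $\fs_k$, whence $\gate{C}{\gate{F^*}{F'}} \in \fs_{k+1}$; the identity combined with axiom (4) for $\mathfrak{G}$ then yields $\gate{F}{F'} \in \mathfrak{G}$. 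The base case $F = \mathcal{X}$ uses $\gate{\mathcal{X}}{F'} = F'$ from Lemma \ref{lem:HHS-2.6}, completing the argument.
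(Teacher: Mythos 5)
The paper imports this lemma from Hagen's work (cited as \cite[Lemma 2.2]{Hagen_2020}) without reproducing a proof, so there is no internal proof to compare against; I will assess your argument on its own terms.

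Your overall strategy is the right one: the easy inclusion $\bigcup_n \fs_n \subseteq \fs$ follows by induction as you say, and for the reverse inclusion you correctly reduce to verifying that $\mathfrak{G} := \bigcup_{n\geq 0}\fs_n$ satisfies the four axioms of Definition~\ref{def:hyperclosure} and then invoke minimality. Your treatments of axioms (1) and (2) are fine, and your treatment of axiom (3) is correct \emph{given} axiom (4): the identity $\gate{\gate{C}{F^*}}{F'} \parallel \gate{C}{\gate{F^*}{F'}}$ does follow from two applications of Lemma~\ref{lem:Hagen-1.5}, since both sides are crossed by precisely the hyperplanes crossing all of $C$, $F^*$, and $F'$, and the induction on the level of $F$ then works with base case $\gate{\mathcal{X}}{F'}=F'$.

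The gap is in axiom (4), and it is not cosmetic. You propose to realize a parallel copy $F'$ of $\gate{C}{F^*}$ as $\gate{C'}{F^{*\prime}}$ with $C'\in\mathfrak{C}$ parallel to $C$ and $F^{*\prime}$ parallel to $F^*$, but you never explain how to choose these so that the gate lands on $F'$ \emph{exactly} rather than merely on something parallel to it; and ``parallel to $F'$'' is useless here, since $F'$ is by hypothesis already parallel to $F\in\mathfrak{G}$. There is also a quieter issue: a subcomplex parallel to a combinatorial hyperplane $C$ is guaranteed (via \cite[Lemma 2.5]{behrstock2017}) only to be \emph{contained in} a combinatorial hyperplane, not to \emph{be} one, so the $C'\in\mathfrak{C}$ you want may not exist where you want it. A cleaner and complete route avoids the decomposition entirely: enumerate the hyperplanes $H_1,\dots,H_m$ separating $F$ from $F'$, and iteratively replace the current copy by $\gate{C_i}{\,\cdot\,}$, where $C_i$ is the combinatorial hyperplane of an adjacent separating $H_i$ on the $F'$-side. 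Each step raises the level by one (so stays in $\mathfrak{G}$), each gate is crossed by exactly the hyperplanes crossing $F$ (by Lemma~\ref{lem:Hagen-1.5}, since every hyperplane crossing $F$ crosses each $C_i$), and after $m$ steps one lands on $F'$. This is the same one-hyperplane-at-a-time technique the present paper itself deploys in Lemma~\ref{lem:factorprojections}; adopting it would close the gap and also decouple axiom (4) from axiom (3).
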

Notice that $\fs_1$ is equal to the set of all combinatorial hyperplanes of $\mathcal{X}.$

A set of convex subcomplexes closely related to the hyperclosure is the set $\mathfrak{M}_{\xi}$ that we will refer to as the \emph{closure of $\mathcal{X}$}. Here we define $\mathfrak{M}_{\xi}$ to be the closure of the set
\[ \mathcal{Y}=\{ \text{combinatorial hyperplanes of  }\mathcal{X}\}\cup\{\mathcal{X}\}\]
under projections with diameter $\geq \xi$; that is, $\mathfrak{M}_{\xi}$ is the smallest set containing $\mathcal{Y}$ such that for all $F_1, F_2\in \mathfrak{M}_{\xi}$, if $\operatorname{diam}(\gate{F_1}{F_2}) \geq \xi$, then $\gate{F_1}{F_2}\in\mathfrak{M}_{\xi}$.

As with the hyperclosure, $\mathfrak{M}_{\xi}$ will be a factor system when it has finitely multiplicity; it is clear by the definition of $\mathfrak{M}_{\xi}$ that it satisfies properties (1) and (4) of Definition \ref{def:factorsys}. Moreover, $\mathfrak{M}_{\xi}$ satisfies property (3) of Definition \ref{def:factorsys} because if $C$ is a combinatorial hyperplane and $C'\subset \mathcal{X}$ is a convex subcomplex parallel to $C$, then by \cite[Lemma 2.5]{behrstock2017}, $C$ is contained in a combinatorial hyperplane $H$, and $C' = \gate{H}{C} \in \mathfrak{M}_{\xi}$. When $\mathfrak{M}_{\xi}$ is a factor system, it is referred to in \cite{behrstock2017} as the \emph{minimal factor system}, and the authors note that any factor system for $\mathcal{X}$ with projections closed under the chosen $\xi$ must contain $\mathfrak{M}_{\xi}$. We also note that if $\fs$ is the hyperclosure of $\mathcal{X}$, then $\mathfrak{M}_{0}\subset \fs$. This is because $\mathfrak{M}_0$ satisfies properties (1)-(3) of Definition \ref{def:hyperclosure}, implying that any $\mathfrak{G}$ as in Definition \ref{def:hyperclosure} must contain $\mathfrak{M}_0$. We have equality when $\mathfrak{M}_0$ is closed under parallelism. For the remainder of this paper, we let $\mathfrak{M} = \mathfrak{M}_0$ so that $\mathfrak{M}\subset \fs$.

Associated to every HHS are sets known as \emph{standard product regions.} For the purposes of this paper, we will only need to understand what the product regions look like when our HHS is a $\cat$ cube complex; for details regarding standard product regions for a general HHS, see for instance \cite[Section 13.1]{behrstock2017}. For a $\cat$ cube complex, \cite[Remark 13.5]{behrstock2017} describes the standard product regions as subcomplexes of $\mathcal{X}$ that are of the form $F\times E_F$, where $F$ is any subcomplex in a factor system $\fs$, and $E_F$ is an associated subcomplex as described in Lemma \ref{lem:HHS-2.4}.

Suppose $(G,\is)$ is an HHG for which the HHG structure comes from an action on the $\cat$ cube complex $\mathcal{X}$. We say that $(G,\is)$ has \emph{unbounded products} if the following holds: for every $F\in\is-\{\mathcal{X}\}$, whenever $\operatorname{diam}(F) = \infty$, then $\operatorname{diam}(E_F)= \infty$. For the more general definition of unbounded products for any HHS, see \cite[Section 3.1]{abbott2017largest}.

\subsection{Coarse geometry and stability}\label{subsec:coarsestable}

Suppose $(X,d_X)$ and $(Y,d_Y)$ are metric spaces and that $f:X\to Y$ is a (not necessarily continuous) map. If there exists $K\geq 1$, $C\geq 0$ such that for every $a,b\in X$
\[
    \frac{1}{K}d_X(a,b) -C \leq d_Y(f(a),f(b))
 \leq Kd_X(a,b) +C,
\]
then we say that $f$ is a $(K,C)$-\emph{quasi-isometric embedding}.
If it is also true that there exists $D\geq 0$ such that $Y$ is contained in a $D$-neighborhood of $f(X)$, then we call $f$ a $(K,C)$-\emph{quasi-isometry}, and we say that $f(X)$ is a \emph{$D$-dense subset of $Y$.} When there exists a quasi-isometry $f:X\to Y$, we say that $X$ and $Y$ are \emph{quasi-isometric}. For any quasi-isometry $f:X\to Y$, there exists a quasi-isometry $g:Y\to X$ and a constant $k\in\N$ such that for all $x\in X$ and $y\in Y$, $d_X(gf(x),x)\leq k$ and $d_Y(fg(y),y) \leq k$; we call $g$ a \emph{quasi-inverse} for $f$. If $f:I\to Y$  is a quasi-isometric embedding with $I$ an interval of the real line, we call $f$ a $(K,C)$-\emph{quasi-geodesic.}

Let $f:X\to Y$ be a function on two $G$-spaces $X$ and $Y$. We say that the function $f$ is \emph{coarsely $G$-equivariant} if there exists $N\in\N$ such that for all $x\in X$ and $\gamma\in G$,
\[
    d_Y(\gamma\cdot f(x) ,f(\gamma\cdot x)) \leq N.
\]
In other words, the function $f$ fails to be $G$-equivariant by some bounded distance.

Let $G$ be a finitely generated group, and $H$ a finitely generated subgroup of $G$. We say that $H$ is \emph{undistorted} in $G$ if the inclusion $i:H\hookrightarrow G$ is a quasi-isometric embedding for some (any) word metrics on $H$ and $G$. If $H$ is undistorted in $G$ then it is a \emph{stable subgroup} of $G$ if for any finite generating set $S$ for $G$ with associated word metric $|\cdot|_S$, and for every $K\geq 1$, $C\geq 0$, there is some $D =D(S,K,C)$ such that any two $(K,C)$-quasi-geodesics in $(G,|\cdot|_S)$ with common endpoints in $H\subset (G,|\cdot|_S)$ remain in the $D$-neighborhoods of each other. Durham and Taylor show in \cite{durham2015} that stability of subgroups is a quasi-isometric invariant.

As mentioned in the introduction, we will be using using the characterization of stable subgroups of HHGs provided in \cite{abbott2017largest}. The authors of that paper present two characterizations of stable subgroups of HHGs. In \cite[Theorem B]{abbott2017largest}, the authors provide a characterization of stable subgroups for any HHG, but this characterization requires alterations to the HHS structure. Alternatively, they produce a characterization of stable subgroups of HHGs that does not require any alteration of the HHS structure, but that adds the additional requirement that product regions are unbounded. In this paper, we will utilize the latter characterization of stable subgroups of HHGs, recorded below, with the wording changed slightly to better fit our setting.
\begin{theorem}[{{\cite[Corollary 6.2]{abbott2017largest}}}]\label{thm:abbott}
Suppose $(G,\is)$ is a hierarchically hyperbolic group with unbounded products, and that $H<G$ is a finitely generated subgroup. Then the following are equivalent.
\begin{enumerate}
    \item $H$ is a stable subgroup of $G$.
    \item $H$ is undistorted in $G$ and has uniformly bounded projections.
    \item Any orbit map $H\to \contact{S}$ is a quasi-isometric embedding, where $S$ is maximal in $\is$.
\end{enumerate}
\end{theorem}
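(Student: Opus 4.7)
The plan is to prove the characterization by establishing the cycle of implications $(1) \Rightarrow (2) \Rightarrow (3) \Rightarrow (1)$, leveraging the HHS machinery (in particular the distance formula and the bounded geodesic image property) together with the hyperbolicity of $\fcontact{S}$.

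For $(1) \Rightarrow (2)$, undistortion is immediate from the definition of stability. To see that stable subgroups have uniformly bounded projections, I would argue contrapositively: if the projection of $H$ to the hyperbolic space associated to some non-maximal domain $U \in \is$ is unbounded, then via the standard product region $F_U \times E_U$ and the unbounded products hypothesis, I can construct pairs of $(K,C)$-quasi-geodesics between points of $H$ that enter deeply into the product region and diverge by an arbitrarily large amount from one another, contradicting the Morse condition that defines stability. The unbounded products hypothesis is essential here so that the product region genuinely provides room for divergent quasi-geodesics.

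For $(2) \Rightarrow (3)$, I would invoke the HHS distance formula. Uniformly bounded projections mean that for all $h, h' \in H$, every non-maximal term in the sum defining $d_G(h,h')$ is uniformly bounded, so up to additive and multiplicative error the whole sum is controlled by the projection distance in $\fcontact{S}$. Combined with undistortion (which gives the lower bound going from $H$ into $G$), this shows any orbit map $H \to \fcontact{S}$ is a quasi-isometric embedding.

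For $(3) \Rightarrow (1)$, the goal is to promote a quasi-isometric orbit in a hyperbolic space to the Morse property in $G$. The idea is to use that quasi-geodesics in a $\delta$-hyperbolic space are uniformly Morse, and then pull this back using the hierarchical structure: any quasi-geodesic in $G$ between two points of an $H$-orbit projects to a uniform quasi-geodesic in $\fcontact{S}$ fellow-travelling the $H$-orbit, while on each non-maximal domain the bounded geodesic image axiom forces projections to stay close to those of the orbit. A hierarchy-paths or realization-type argument then shows any two such quasi-geodesics in $G$ fellow-travel, which is exactly stability. I expect this last implication to be the main obstacle, because converting coarse hyperbolic control at the top level into uniform fellow-travelling in the (non-hyperbolic) group $G$ is precisely where the full strength of the HHS axioms and the unbounded products hypothesis must be combined carefully; in particular, one must rule out quasi-geodesics that "cheat" by detouring through standard product regions, and this is where the hypothesis that product regions are unbounded enters to ensure that the Morse characterization is not obstructed by artificially bounded factors.
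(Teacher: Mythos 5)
This statement is a result imported verbatim from Abbott--Behrstock--Berlyne--Durham--Russell (their Corollary 6.2); the paper under review does not prove it. It is stated as background in Section 2 and applied as a black box in the proof of Theorem \ref{thm:stable-orbit}, after the hypotheses are verified for $\hh$ in Corollary \ref{cor:HHG} (which gives the HHG structure) and Lemma \ref{lem:unboundedproducts} (which gives unbounded products). So there is no ``paper's own proof'' to compare your attempt against, and the correct move for this paper is exactly what it does: cite and apply.

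As a standalone assessment of your sketch: the cycle $(1)\Rightarrow(2)\Rightarrow(3)\Rightarrow(1)$ is the natural structure, and $(2)\Rightarrow(3)$ via the distance formula with threshold chosen above the projection bound is essentially complete. Your contrapositive for $(1)\Rightarrow(2)$ correctly isolates the role of the unbounded products hypothesis: if $E_U$ were allowed to be bounded, a large projection to $\contact{U}$ would not supply room to build diverging quasi-geodesics, so it would not obstruct stability. Where the outline is thinnest is $(3)\Rightarrow(1)$, which you flag yourself: ``a hierarchy-paths or realization-type argument then shows any two such quasi-geodesics fellow-travel'' carries the entire weight of the hardest implication. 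In the source, this direction goes through a contracting characterization: orbit maps being QI-embedded into the top-level hyperbolic space together with undistortion yields a contracting property for $H$-orbits in the sense of their Section 6, and contracting subsets of HHSs are Morse. Your outline would need to make precise how the bounded geodesic image axiom and hierarchy paths control the non-top-level projections of an \emph{arbitrary} quasi-geodesic between orbit points, rather than only geodesics, and why that yields uniform fellow-travelling; as written, that step is a gesture at the mechanism rather than an argument. None of this affects the paper, which treats the theorem as a citation.
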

Note that here $\contact{S}$ refers to the $\delta$-hyperbolic space associated to the maximal domain $S\in \is$. Additionally, uniformly bounded projections refers to the following notion. Suppose that $(\mathcal{X},\is)$ is any HHS with associated $\delta$-hyperbolic spaces $\{\contact{F}:F\in\is\}$. Also associated to $\mathcal{X}$ space is a collection of projection maps $\{\pi_F:\mathcal{X}\to 2^{\contact{F}}: F\in \is\}$ sending points in $\mathcal{X}$ to sets of bounded diameter in $\contact{F}$. Suppose $\mathcal{Y}\subset \mathcal{X}$ is any subset, and suppose $S$ is the maximal element of $\is$. We say that $\mathcal{Y}$ has $D$-\emph{bounded projections} if there exists some $D>0$ such that, $\operatorname{diam}(\pi_F(\mathcal{Y})) < D$ for all $F\in\is-\{S\}$. If the constant $D$ does not matter, we say $\mathcal{Y}$ has \emph{uniformly bounded projections}. In the case that $\mathcal{X}$ is a $\cat$ cube complex, the maps $\pi_F:\mathcal{X}\to \fcontact{F}$ send each point $x\in\mathcal{X}^{(0)}$ to the clique of vertices corresponding to hyperplanes whose carriers containing $\gate{F}{x}$.

\section{A Model for \texorpdfstring{$\hh$}{H2}}\label{sec:model}

In \cite{hamHenDehn}, Hamenst\"adt and Hensel construct a $\cat$ cube complex on which the handlebody group of genus two acts properly, comcompactly, and by isometries, and which we will refer to as $\M$. In this section, we will take a detailed look at this cube complex. In particular, we summarize their construction in Section \ref{subsec:model}, classify the hyperplanes of $\M$ in Section \ref{subsec:hyperplanes}, determine the parallelism classes of the combinatorial hyperplanes of $\M$ in Section \ref{subsec:parallelhyperplanes}, and discuss some properties of the contact graph $\contact{\M}$ in Section \ref{subsec:edgescontact}. Throughtout this section, let $V_2$ be a genus two handlebody.

\subsection{The model}\label{subsec:model} Let $X = \{\alpha_1,\alpha_2,\alpha_3\}$ be a pants decomposition on $\bd V_2$ consisting only of non-separating meridians. Hamenst\"adt and Hensel (\cite[Lemma 6.1]{hamHenDehn}) show that for each such $X$, one can construct a \emph{dual system} $\Delta = \{\delta_1, \delta_2, \delta_3\}$ of non-separating meridians satisfying the following properties:
\begin{enumerate}
    \item[(i)] $\delta_i$ is disjoint from $\alpha_j$ for $i\neq j$.
    \item[(ii)] $\delta_i$ intersects $\alpha_i$ exactly twice.
    \item[(iii)] A dual system $\Delta$ is uniquely defined up to Dehn twists about curves in $X$. In particular, if $\delta_i$ and $\delta_i'$ are two different dual curves to $\alpha_i$, then $\delta_i = T_{\alpha_i}^{n_i}(\delta_i')$ for some integer $n_i$.
\end{enumerate}
The $0$-skeleton of $\M$ comprises all pairs $(X,\Delta)$ as above.

There are two types of edges in the $1$-skeleton of $\M$. Two vertices $(X,\Delta)$ and $(X',\Delta')$ will be connected by a \emph{twist edge} if $X = X'$ and $\Delta' = T_{\alpha_i}(\Delta)$ for some $\alpha_i \in X$, ie the vertices share a pants decomposition and the dual systems differ by a Dehn twist about one of the pants curves.

To describe the second type of edges, suppose $X = \{\alpha_1,\alpha_2,\alpha_3\}$ and $\Delta = \{\delta_1,\delta_2,\delta_3\}$ are a pants decomposition and dual system pair. Switching some $\alpha_i$, say $\alpha_1$, with the corresponding dual curve $\delta_1$ produces another pants decomposition $X' = \{\delta_1,\alpha_2,\alpha_3\}.$ The collection $\{ \alpha_1,\delta_2,\delta_3\}$ will not be a dual system to $X'$ because $\delta_2$ and $\delta_3$ will both intersect $\delta_1$ twice, but by applying a canonical cleanup process $c$ to $\{\alpha_1,\delta_2,\delta_3\}$, (as described in \cite[Section 6]{hamHenDehn} and as illustrated in Figure \ref{figure:cleanup-example}), we can obtain a dual system $\Delta' = \{\alpha_1, c(\delta_2), c(\delta_3)\}$ for $X'$. We connect any two such  vertices $(X,\Delta)$ and $(X',\Delta')$ via an edge, called a \emph{switch edge}, and we say $(X',\Delta')$ is \emph{obtained from $(X,\Delta)$ by switching $\alpha_1$.} The canonical cleanup function $c$ commutes with Dehn twists (\cite[Lemma 6.3]{hamHenDehn}).
    
\begin{figure}[htb]
\centering
\labellist \small\hair 2pt
 \pinlabel {$\alpha_2^+$} [ ] at 160 700
 \pinlabel {$\alpha_3^+$} [ ] at 210 350
 \pinlabel {$\delta_1$} [ ] at 230 505
 \pinlabel {$\delta_2$} [ ] at 420 625
 \pinlabel {$\alpha_1$} [ ] at 760 505
 \pinlabel {$\alpha_3^-$} [ ] at 610 590
 \pinlabel {$\alpha_2^-$} [ ] at 610 270
 \pinlabel {$\alpha_2^+$} [ ] at 1035 700
 \pinlabel {$c(\delta_2)$} [ ] at 1265 625
 \pinlabel {$\alpha_3^+$} [ ] at 1085 350
 \pinlabel {$\alpha_2^-$} [ ] at 1485 270
 \pinlabel {$\alpha_3^-$} [ ] at 1485 590
 \pinlabel {$\delta_1$} [ ] at 1105 505
 \pinlabel {$\alpha_1$} [ ] at 1635 505
\endlabellist
\includegraphics[scale=.20]{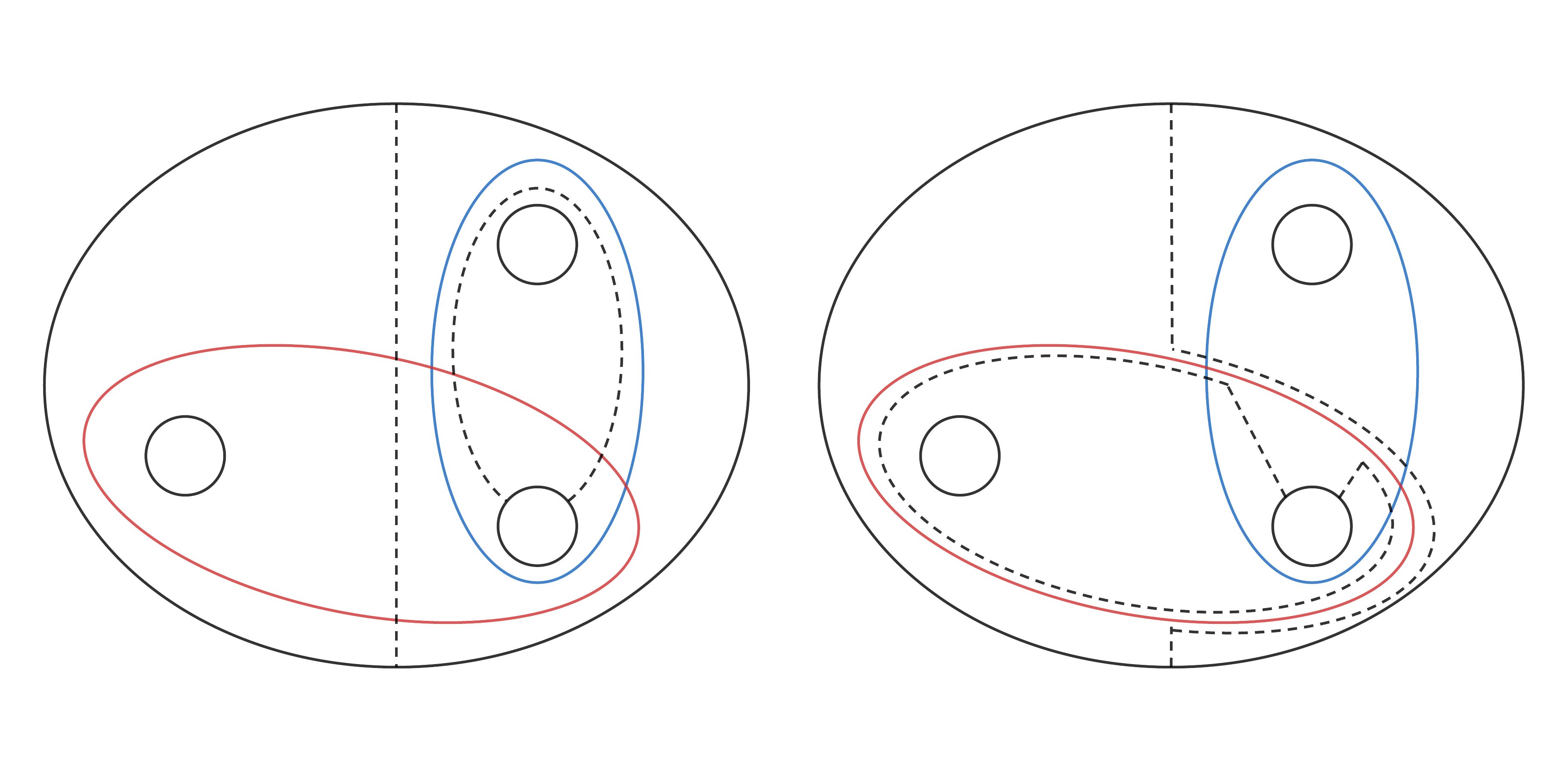}
\caption{This image shows $\bd V_2$ cut along $\alpha_2$ and $\alpha_3$, where $\alpha_i^{\pm}$ refers to the two sides of the curve after cutting. This figure illustrates the cleanup of $\delta_2$ after switching $\alpha_1$ and $\delta_1$, and is based off of \cite[Figure 8]{hamHenDehn}}.
\label{figure:cleanup-example}
\end{figure}

We then fill in higher dimensional cubes $[-\frac{1}{2},\frac{1}{2}]^n$ wherever we see their $1$-skeleta. The resulting cube complex is $3$-dimensional and contains two distinct types of $3$-cubes, which we now describe. First, fix some pants decomposition $X=\{\alpha_1,\alpha_2,\alpha_3\}$. Consider the subgraph of the $1$-skeleton of $\M$ containing only vertices whose pants decomposition is $X$. Because dual systems for a given pants decomposition are unique up to Dehn twists about the pants curves $\alpha_i$, this subgraph contains only twist edges, and is in fact isomorphic to the Cayley graph of $\Z^3$, with respect to a basis generating set. This subgraph forms the $1$-skeleton of a subcomplex isomorphic to $\R^3$, with the standard integral cube complex structure, and the $3$-cubes contained in this subcomplex are referred to as \emph{twist cubes}. We denote this subcomplex by $\M(X)$, and we call such subcomplexes \emph{twist flats.}

To describe the second type of $3$-cubes contained in $\M$, again fix a pants decomposition $X = \{\alpha_1,\alpha_2,\alpha_3\}$, as well as a dual system $\Delta = \{\delta_1,\delta_2,\delta_3\}$. Let $(X',\Delta')$ be obtained from $(X,\Delta)$ by switching $\alpha_2$. Notice that the following four vertices form the four vertices of a $2$-cube $k(X)$ whose edges are all twist edges:
    \[
        \{(X,\Delta), (X,T_{\alpha_1}(\Delta)), (X,T_{\alpha_3}(\Delta)), (X,T_{\alpha_1}T_{\alpha_3}(\Delta))\}.
    \]
Similarly, the following four vertices are contained in a $2$-cube $k(X')$ whose edges are all twist edges:
    \[
        \{(X',\Delta'), (X',T_{\alpha_1}(\Delta')), (X',T_{\alpha_3}(\Delta')), (X',T_{\alpha_1}T_{\alpha_3}(\Delta'))\}.
    \]
Furthermore, $k(X)$ and $k(X')$ are connected to one another via switch edges. In particular, there are switch edges connecting $(X,\Delta)$ to $(X',\Delta')$, $(X,T_{\alpha_1}(\Delta))$ to $(X',T_{\alpha_1}(\Delta'))$, $(X,T_{\alpha_3}(\Delta))$ to $(X',T_{\alpha_3}(\Delta'))$, and $(X,T_{\alpha_1}T_{\alpha_3}(\Delta))$ to $(X',T_{\alpha_1}T_{\alpha_3}(\Delta'))$. These switch edges, along with the twist edges in $k(X)$ and $k(X')$ form the $1$-skeleton of a $3$-cube, which we refer to as a \emph{switch cube}. The subcomplex containing all switch cubes connecting the twist flats $\M(X)$ and $\M(X')$ is isomorphic to $\R^2\times [-\frac{1}{2},\frac{1}{2}]$. We denote this subcomplex by $\M(X,X')$ and refer to such subcomplexes as \emph{switch bridges}. An illustration of how a switch bridge connects two twist flats can be seen in Figure \ref{figure:twistflats-switchbridge}.

\begin{figure}[htb]
\centering
\labellist \small\hair 2pt
\pinlabel {$T_{\alpha_2}$} [ ] at 450 300
 \pinlabel {$T_{\alpha_1}$} [ ] at 150 95
 \pinlabel {$T_{\alpha_3}$} [ ] at 343 805
 \pinlabel {$\M(X)$} [ ] at 203 560
 \pinlabel {$\M(X')$} [ ] at 1345 560
 \pinlabel {$\M(X,X')$} [ ] at 720 400
 \pinlabel {$T_{\alpha_1}$} [ ] at 935 95
 \pinlabel {$T_{\delta_2}$} [ ] at 1456 300
 \pinlabel {$T_{\alpha_3}$} [ ] at 1125 805
\endlabellist
\includegraphics[scale=.23]{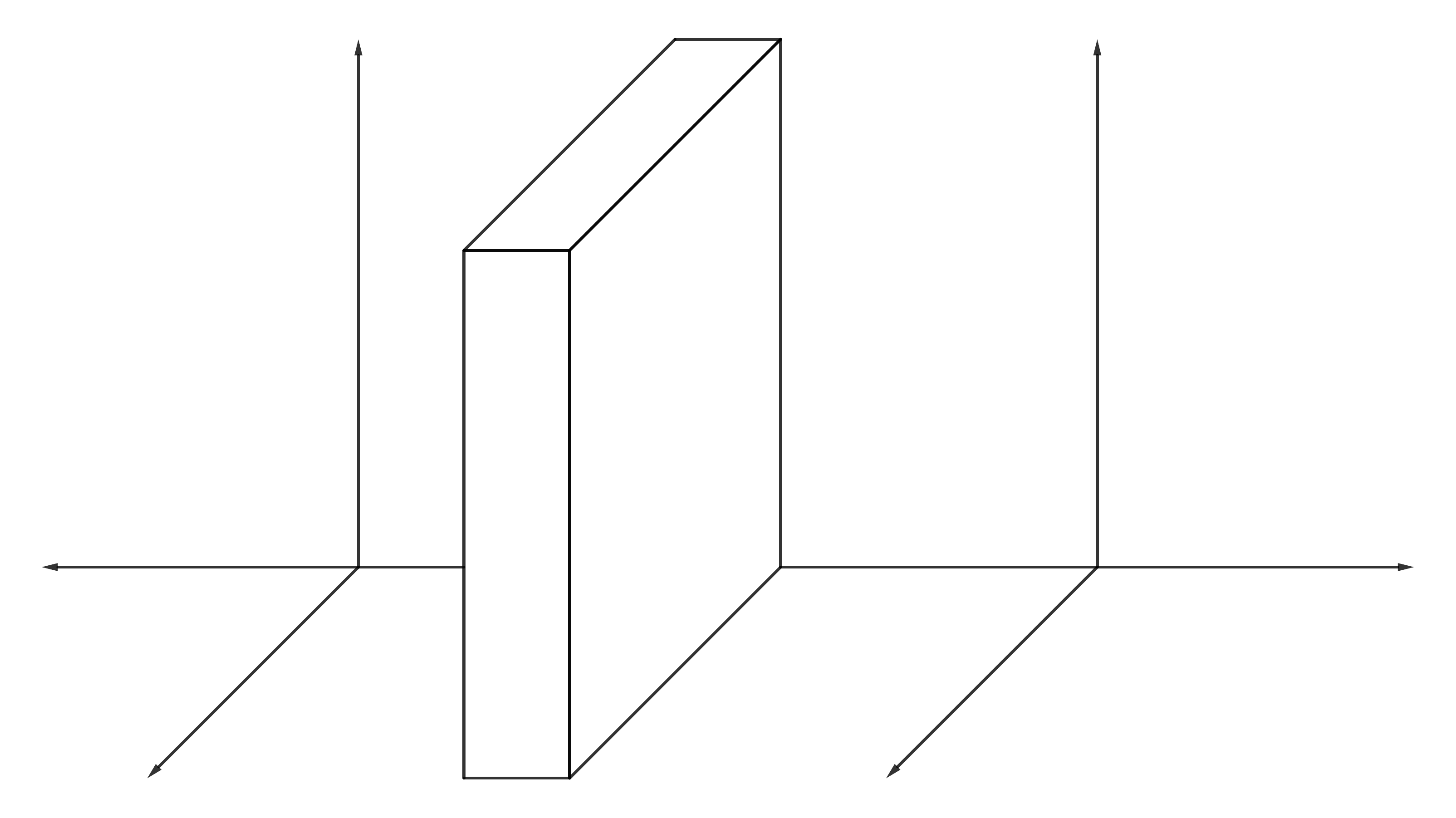}
\caption{Two twist flats $\M(X)$ and $\M(X')$, which are copies of $\R^3$, are glued to the switch bridge $\M(X,X')$, which is a copy of $\R^2\times [-\frac{1}{2},\frac{1}{2}]$, along copies of $\R^2$ contained in the $2$-skeletons of $\M(X)$ and $\M(X')$. Here $X = \{\alpha_1,\alpha_2,\alpha_3\}$ and $X' = \{\alpha_1,\delta_2,\alpha_3\}$.}
\label{figure:twistflats-switchbridge}
\end{figure}

Hamenst\"adt and Hensel show that there is a surjection $p$ from $\M$ onto a tree called the \emph{non-separating meridional pants graph}, which we refer to in this paper as $\Pnm$. The vertices of $\Pnm$ correspond to pants decompositions $X$ of non-separating meridians, and there is an edge between two pants decompositions if they intersect minimally, ie if they intersect twice. The map $p:\M \to \Pnm$ maps each twist flat $\M(X)$ to the vertex $X\in \Pnm^{(0)}$ and each switch bridge $\M(X,X')$ to the edge between $X$ and $X'$ in $\Pnm$.

Additionally, given a cut system $Z$ on $V_2$, Hamenst\"adt and Hensel define the induced subgraph $\Pnm(Z)$ of $\Pnm$ as the subgraph with vertices corresponding to pants decompositions containing the cut system $Z$. They show in \cite[Corollary 5.11]{hamHenDehn} that $\Pnm(Z)$ is a tree, and that for any distinct cut systems $Z\neq Z'$, the subtrees $\Pnm(Z)$ and $\Pnm(Z')$ intersect in at most a single point.

Let us define a similar subgraph $\Pnm(\alpha)$ of $\Pnm$ as the induced subgraph with vertices corresponding to pants decompositions that contain the non-separating meridian $\alpha$. The following lemma regarding $\Pnm(\alpha)$ will be useful in the discussion of parallelism classes of hyperplanes in $\M$. It should be noted that a proof of this lemma is also sketched in the proof of \cite[Corollary 5.18]{hamHenDehn}.

\begin{lemma}\label{lem:commoncurvepath}
    Let $\alpha$ a non-separating meridian on $V_2$. Then $\Pnm(\alpha)$ is a subtree of $\Pnm$.
\end{lemma}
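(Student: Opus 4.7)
The plan is to show connectivity of $\Pnm(\alpha)$; since $\Pnm(\alpha)$ is an induced subgraph of the tree $\Pnm$, it is automatically a forest, and connectivity is sufficient to conclude that it is a subtree.

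To establish connectivity, I would take two vertices $X = \{\alpha, \beta_1, \beta_2\}$ and $X' = \{\alpha, \beta_1', \beta_2'\}$ of $\Pnm(\alpha)$ and construct a path between them in $\Pnm(\alpha)$ by induction on the complexity $c(X) := i(\beta_1 \cup \beta_2,\, \beta_1' \cup \beta_2')$. For the base case $c(X) = 0$, the four curves $\beta_1, \beta_2, \beta_1', \beta_2'$ are all pairwise disjoint and also disjoint from $\alpha$; since a closed genus two surface admits at most $3g - 3 = 3$ pairwise disjoint, pairwise non-isotopic essential simple closed curves, this forces $\{\beta_1, \beta_2\} = \{\beta_1', \beta_2'\}$ after identifying isotopy classes, and hence $X = X'$.

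For the inductive step with $c(X) > 0$, the key move is to apply the surgery lemma (Lemma~\ref{lem:surgery}) to the cut system $Z := \{\beta_1, \beta_2\}$ in the direction of the multimeridian $\beta_1' \cup \beta_2'$. A preliminary observation is that $Z$ really is a cut system: for a pants decomposition of $\bd V_2$ consisting entirely of non-separating meridians, the two pants are glued across all three pants curves with no self-gluings (a self-glued curve would be separating), so removing any two of these curves leaves a connected four-holed sphere. The surgery lemma then produces a subarc $b$ of $\beta_1' \cup \beta_2'$ with both endpoints on, say, $\beta_1$, and a surgered cut system $\{\beta_1'', \beta_2\}$ satisfying $i(\beta_1'', \beta_1' \cup \beta_2') = i(\beta_1, \beta_1' \cup \beta_2') - 2$. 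Crucially, the surgery automatically preserves $\alpha$: the new curve $\beta_1''$ is built from arcs of $\beta_1$ and of $\beta_1' \cup \beta_2'$, all of which are disjoint from $\alpha$.

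I would then set $X_1 := \{\alpha, \beta_1'', \beta_2\}$ and check that $X_1$ is a vertex of $\Pnm(\alpha)$ adjacent to $X$ in $\Pnm$: the three curves are pairwise disjoint; $\beta_1''$ is a meridian by construction; $\beta_1''$ is non-separating on $\bd V_2$ because the connectedness of $\bd V_2 \setminus (\beta_1'' \cup \beta_2)$ implies that of $\bd V_2 \setminus \beta_1''$; a standard Euler characteristic argument shows the three curves form a genuine pants decomposition; and the fact that $X$ and $X_1$ share $\{\alpha, \beta_2\}$ and differ only in $\beta_1 \leftrightarrow \beta_1''$ with $i(\beta_1, \beta_1'') = 2$ gives an edge in $\Pnm$. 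Since $c(X_1) = c(X) - 2$, iteration produces a path from $X$ to $X'$ entirely within $\Pnm(\alpha)$. The main obstacle will be the bundle of topological verifications in the inductive step --- ensuring that $\beta_1''$ is a non-separating meridian non-isotopic to $\alpha$ and $\beta_2$, and that the resulting triple truly lies in $\Pnm^{(0)}$ rather than degenerating --- which leans on the non-separating hypothesis to restrict the allowed topological types of pants decompositions.
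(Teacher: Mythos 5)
The inductive step has a fatal flaw, and it is precisely the flaw that your own base-case reasoning should have flagged. You apply Lemma~\ref{lem:surgery} to the cut system $Z=\{\beta_1,\beta_2\}$ in the direction of $\beta_1'\cup\beta_2'$, producing a new curve $\beta_1''=a\cup b$ with $a\subset\beta_1$ and $b\subset\beta_1'\cup\beta_2'$. You correctly note that $\beta_1''$ is disjoint from $\alpha$ (both $a$ and $b$ are disjoint from $\alpha$) and from $\beta_2$ (since $\{\beta_1'',\beta_2\}$ is a cut system). But $\beta_1''$ is also disjoint from $\beta_1$ after a small isotopy: since $b$ approaches $\beta_1$ from a single side, pushing $a$ off $\beta_1$ to that side produces a representative of $\beta_1''$ in the complement of $\beta_1$. (This is precisely why the paper states that consecutive cut systems in a surgery sequence have no transverse intersections.) So $\beta_1''$ is an essential curve disjoint from all three curves of the pants decomposition $X=\{\alpha,\beta_1,\beta_2\}$, which means $\beta_1''$ lives in a pair of pants of $\Sigma_2\setminus X$ and must be isotopic to one of $\alpha,\beta_1,\beta_2$. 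The surgery changes the isotopy class (so $\beta_1''\not\simeq\beta_1$), and the cut-system property forces $\beta_1''\not\simeq\beta_2$; therefore $\beta_1''\simeq\alpha$. Your candidate vertex $X_1=\{\alpha,\beta_1'',\beta_2\}$ is thus always degenerate --- it contains only two isotopy classes and is not a vertex of $\Pnm$, so the induction never starts. Relatedly, your claim that $i(\beta_1,\beta_1'')=2$ (used to produce the edge in $\Pnm$) is false; that intersection number is $0$. The very counting argument you invoke in the base case ($3g-3=3$ bounds disjoint non-isotopic curves on $\bd V_2$) is what kills the inductive step.

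The paper sidesteps this by surgering the cut system $Z=\{\alpha,\beta_1\}$ --- the one \emph{containing} $\alpha$ --- in the direction of $X'$. Because $\alpha$ is disjoint from $X'$, the surgery arc always has its endpoints on $\beta_1$, so $\alpha$ survives in every cut system $Z_i$ of the surgery sequence, and the surgered curve is disjoint from only two curves $\alpha,\beta_1$ rather than from all three of a pants decomposition; no degeneracy occurs. Instead of trying to produce an edge of $\Pnm$ directly, the paper observes that $Z_i\cup Z_{i\pm 1}$ are vertices of $\Pnm(Z_i)$ and uses the Hamenst\"adt--Hensel result (\cite[Corollary 5.11]{hamHenDehn}) that $\Pnm(Z_i)$ is a tree, every vertex of which contains $\alpha$, to connect these vertices by paths lying entirely in $\Pnm(\alpha)$. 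If you wish to salvage a surgery-based argument, you must keep $\alpha$ inside the cut system you surger and accept a path (not a single edge) at each stage, which is essentially the paper's proof.
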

\begin{proof}
    Since $\Pnm$ is a tree, it suffices to show that $\Pnm(\alpha)$ is connected. Suppose $X = \{\alpha,\beta_1,\beta_2\}$ and $X' = \{\alpha,\delta_1,\delta_2\}$ are two distinct pants decompositions of non-separating meridians. Since the pants decompositions are distinct but contain a common curve $\alpha$, one of the $\beta_i$ must intersect some $\delta_i$. Say $\beta_1 \cap (\delta_1\cup\delta_2) \neq \emptyset$. Let $Z = \{\alpha,\beta_1\}$, which is a cut system. There is a surgery sequence $(Z_i)^n_{i=1}$ starting from $Z_1 = Z$ in the direction $X'$. The final cut system $Z_n$ is disjoint from $X'$, and since $X'$ is a pants decomposition, it must be that $Z_n \subset X'$.
    
    Since $\alpha\cap \beta_i = \emptyset$ and $\alpha\cap \delta_i = \emptyset$ for each $i$, the meridian surgeries performed to attain $(Z_i)^n_{i=1}$ will never be performed on $\alpha$. In particular, every cut system $Z_i$ must contain the meridian $\alpha$. For a fixed $i\in[2,n-1]$, the unions $Z_i\cup Z_{i-1}$ and $Z_i\cup Z_{i+1}$ are vertices in $\Pnm(Z_i)$, since consecutive cut systems have no transverse intersections. Because $\Pnm(Z_i)$ is a tree and because $\alpha\in Z_i$, there is a path $\gamma_i\subset \Pnm(Z_i)$ connecting $Z_i\cup Z_{i-1}$ to $Z_i\cup Z_{i+1}$ such that every vertex $\gamma_i(j)$ contains $\alpha$. 
    
    Furthermore, both $X$ and $Z_1\cup Z_2$ are vertices in $\Pnm(Z_1)$, so there is a path $\gamma_1\subset\Pnm(Z_1)$ connecting $X$ to $Z_1\cup Z_2$ such that each vertex $\gamma_1(j)$ contains $\alpha$. Similarly, since $Z_n\subset X'$, it follows that $Z_{n-1}\cup Z_n$ and $X'$ are vertices in $\Pnm(Z_n)$, and hence there is a path $\gamma_n\subset \Pnm(Z_n)$ connecting $Z_{n-1}\cup Z_n$ to $X'$ such that every vertex $\gamma_n(j)$ contains $\alpha$.
    
    The path $\gamma$ constructed by concatenating the paths $\gamma_i$ for $i\in[1,n]$ is a path from $X$ to $X'$ contained entirely in $\Pnm(\alpha)$. This implies that $\Pnm(\alpha)$ is a connected subgraph of the tree $\Pnm$, and hence $\Pnm(\alpha)$ is a subtree.
\end{proof}

\subsection{Hyperplanes}\label{subsec:hyperplanes}

In this subsection, we examine the variants of hyperplanes  found in $\M$, and their associated combinatorial hyperplanes. Understanding the hyperplanes will prove useful in the discussion of the contact graph $\mathcal{C}\M$, and understanding the combinatorial hyperplanes is necessary to prove that $\M$ is a hierarchically hyperbolic space.

\begin{lemma}\label{lem:hypetype}
    Let $H$ be a hyperplane in $\M$. Then $H$ fits into exactly one of the following types.
    \begin{enumerate}
        \item All of the midcubes comprising $H$ are contained in switch cubes. Consequently, $H$ is entirely contained in a switch bridge and is isometric to $\R^2.$
        
        \item $H$ comprises midcubes contained in switch cubes and midcubes contained in twist cubes. Moreover, for each twist flat $\M(X)$ that has non-empty intersection with $H$, $\M(X)\cap H$ is isometric to $\R^2$ and is parallel to one of the coordinate planes in $\M(X)$.
    \end{enumerate}
\end{lemma}

\begin{proof}
    Let $H$ be a hyperplane of $\M$. We recall that any midcube is contained in a unique hyperplane, and hence a hyperplane in $\M$ is determined by its intersection with a single $3$-cube. Specifically, we can distinguish between the two proposed types of hyperplanes by examining how they intersect a single switch cube. We note that every hyperplane must intersect at least one switch cube. This is because if it did not, it would intersect only twist cubes. However, each twist cube has a switch cube glued to each of it's 6 faces and hence any hyperplane that intersects that twist cube must also intersect four of the adjoining switch cubes.
    
    Let $s$ be a switch cube that has non-empty intersection with $H$. Recall that every switch cube $s$ contains both twist and switch edges. Further, the twist edges in $s$ form the $1$-skeletons of two opposite faces of $s$, and these two faces are connected to one another via four parallel switch edges. The switch cube $s$ is contained inside of a unique switch bridge, say $\M(X,X')$, which is isometric to $\R^2\times [-\frac{1}{2},\frac{1}{2}]$. We will show that the two cases in the Lemma come from whether or not $H$ intersects the switch edges of $s$. Note that if $H$ crosses one switch edge of $s$, it must cross all the switch edges of $s$ and in fact all of the switch edges in $\M(X,X')$.
    
    Suppose first that $H$ intersects $s$ so that it only crosses the switch edges of $s$. With this being the case, it follows that $H$ must be contained entirely within $\M(X,X')$ and in fact $\M(X,X')$ is the carrier of $H$. Thus, in this case, $H$ is isometric to $\R^2$.
    
    For the second case, suppose $H$ does not intersect any of the switch edges of $\M(X,X')$. Then $H$ must cross the two faces of $s$ which contain only twist edges. These two faces are glued to twist cubes inside of the twist flats $\M(X)$ and $\M(X')$. Since twist flats are isometric to the standard cubulation of $\R^3$, when $H$ has non-empty intersection with a twist flat, that intersection must be a plane that is parallel to one of the coordinate planes, and hence the intersection is isometric to $\R^2$.
\end{proof}

\begin{definition}
    If a hyperplane is of the first type described in Lemma \ref{lem:hypetype}, then we will call it a \emph{switch hyperplane} and we will call its associated combinatorial hyperplanes \emph{combinatorial switch hyperplanes.} If a hyperplane is of the second type described in Lemma \ref{lem:hypetype}, then we will call it a \emph{twist hyperplane}, and we will refer to the associated combinatorial hyperplanes as \emph{combinatorial twist hyperplanes.}
\end{definition}

We will denote the switch hyperplane contained in a switch bridge $\M(X,X')$ by $H(X,X')$. Now suppose that $X = \{\alpha_1,\alpha_2,\alpha_3\}$ and that $\Delta = \{\delta_1,\delta_2,\delta_3\}$ is a dual ststen to $X$ such that $(X',\Delta')$ is obtained from $(X,\Delta)$ by switching $\alpha_2$. Then the two combinatorial switch hyperplanes associated to $H(X,X')$ will be denoted $C(X,\delta_2)$ and $C(X',\alpha_2)$. Here, $C(X,\delta_2)$ is the combinatorial hyperplane in $\M(X)$ that contains all vertices of the form $(X,\{T_{\alpha_1}^{n_1}(\delta_1), \delta_2, T_{\alpha_3}^{n_3}(\delta_3)\}).$ Similarly, $C(X',\alpha_2)$ is the combinatorial hyperplane in $\M(X')$ that contains all vertices of the form $(X',\{c(T_{\alpha_1}^{n_1}(\delta_1)), \alpha_2, c(T_{\alpha_3}^{n_3}(\delta_3))\}).$

To describe twist hyperplanes and the notation we use, we make two more definitions. Suppose that $\alpha$ and $\delta$ are non-separating meridians; we say that the pair \emph{$(\alpha,\delta)$ is a part of} a vertex $(X,\Delta)\in \M^{(0)}$ if $\alpha\in X$, $\delta\in \Delta$, and $\delta$ is the dual to $\alpha.$ Now suppose there is some vertex $(X,\Delta)\in \M^{(0)}$ such that $(\alpha,\delta)$ is a part of $(X,\Delta)$. We say that a non-separating meridian \emph{$\delta'$ is equivalent to $\delta$} if the following holds:
there exists a sequence of non-separating meridians $\delta = \delta_0, \delta_1, \dots, \delta_n = \delta'$ and a sequence of vertices $(X,\Delta) = (X_0,\Delta_0), (X_1,\Delta_1), \dots, (X_n,\Delta_n)$ such that
\begin{enumerate}
    \item $(\alpha,\delta_i)$ is a part of $(X_i,\Delta_i)$ for all $i$, and
    \item $(X_i,\Delta_i)$ is obtained from $(X_{i-1},\Delta_{i-1})$ by switching one of the curves in $X_{i-1}-\{\alpha\}$, or $(X_i,\Delta_i) = T^j_{\beta}((X_{i-1},\Delta_{i-1}))$ where $\beta\in X_{i-1}-\{\alpha\}$ and $j\in \Z$.
\end{enumerate}
It is clear that this is an equivalence relation, and we denote the equivalence class of $\delta$ as $[\delta]_{\alpha}.$ We will often drop the subscript and simply write $[\delta] = [\delta]_{\alpha}$ to avoid cluttering notation when it is clear from context what the corresponding pants curve should be. We also note that $T_{\alpha}([\delta]_{\alpha}) = [T_{\alpha}(\delta)]_{\alpha}$.

The following lemma shows that a (combinatorial) twist hyperplane is determined by a non-separating curve $\alpha$ and an associated equivalence class $[\delta]_{\alpha}$.

\begin{lemma}\label{lem:twisthypcurves}
    Suppose $H$ is a twist hyperplane. Then there exists non-separating meridians $\alpha_1$ and $\delta_1$ such that for every vertex $(X,\Delta)\in N(H)^{(0)}$, there is some $\delta'$ in $[\delta_1]$ or in $[T_{\alpha_1}(\delta_1)]$ such that $(\alpha_1,\delta')$ is a part of $(X,\Delta)$. Furthermore, $H$ is the unique hyperplane with this property. Moreover $p(H) = p(H^{\pm})= \Pnm(\alpha_1)$.
\end{lemma}
\begin{proof}
    Suppose $H$ is a twist hyperplane. Let $X = \{\alpha_1, \alpha_2, \alpha_3\}$ be a pants decomposition of non-separating meridians such that $H$ has non-empty intersection with $\M(X)$. As described in Lemma \ref{lem:hypetype} (2), the intersection of $H$ with $\M(X)$ is isometric to $\R^2$, and specifically is parallel to one of the coordinate planes of $\M(X).$ Consequently, for one of the curves in $X$, say $\alpha_1$, there is some dual curve $\delta_1$ to $\alpha_1$ such that all of the vertices of $N(H) \cap \M(X)$ are of one of the following forms:
    \[
        (X, T^{n_2}_{\alpha_2}T^{n_3}_{\alpha_3}(\Delta)) \text{ or } (X, T^{n_2}_{\alpha_2}T^{n_3}_{\alpha_3}(T_{\alpha_1}(\Delta))), 
    \]
    where $\Delta$ is a dual system to $X$ which contains $\delta_1$, and $n_2,n_3\in\Z.$ (Essentially, $N(H) \cap \M(X)$ is parallel to the $T_{\alpha_2}T_{\alpha_3}$-coordinate plane). Notice that each of these vertices contains $\alpha_1$ as a pants curve, and either $\delta_1$ or $T_{\alpha_1}(\delta_1)$ as a dual to $\alpha_1$. In fact, every vertex in $N(H)$ must contain $\alpha_1$ because the only switch bridges that can be crossed by $H$ are those that correspond to switching in the direction of $\alpha_2$ or $\alpha_3$. It then follows that the only dual curves that can be found for $\alpha_1$ in $N(H)$ must be obtained from the cleanup procedure associated to switching pants curves other than $\alpha_1$. The collection of dual curves that can be obtained in this way is exactly described by the equivalence classes $[\delta_1]$ and $[T_{\alpha_1}(\delta_1)]$.
    
    In fact, $H$ must be the unique twist hyperplane for which every vertex in its carrier contains $\alpha_1$ as a pants curve and for which the duals to $\alpha_1$ in the carrier are exactly those non-separating meridians in $[\delta_1]$ and $[T_{\alpha_1}(\delta_1)]$. To see this, let $K$ be some other twist hyperplane with these same properties. Notice that by the definition of dual curves, $\bd V_2 - (\alpha_1\cup \delta_1)$ is a disjoint union of two annuli $A_1\cup A_2$, so if $(\alpha_1,\delta_1)$ is the part of some vertex $(X,\Delta)$, then the other pants curves in $X- \{\alpha_1\}$ must be contained in $A_1\cup A_2$. However, each of $A_1$ and $A_2$ contains a unique meridian, so in fact if we know $(\alpha_1,\delta_1)$ is a part of $(X,\Delta)$, then the other two pants curves in $X$ are uniquely determined by the pair $(\alpha_1,\delta_1)$. By a similar argument, if $(\alpha_1,T_{\alpha_1}(\delta_1))$ is a part of a vertex $(X',\Delta')$, then in fact $X' = X$. Because $(\alpha_1,\delta_1)$ and $(\alpha_1,T_{\alpha_1}(\delta_1))$ are parts of vertices in $N(K)$, it then follows that $N(K) \cap \M(X) \neq \emptyset$. Moreover, 
    \[ C(X,\delta_1) \cup C(X,T_{\alpha_1}(\delta_1)) = (K^- \cup K^+) \cap \M(X) = (H^- \cup H^+) \cap \M(X).\]
    Since hyperplanes in $\M$ are determined by their intersections with any single $3$-cube, it follows that $H = K$.
    
    It remains to show that $p(H) = p(H^{\pm}) = \Pnm(\alpha_1)$. Because we know that every vertex in $H$ must contain $\alpha_1$ as a pants curve, it follows that $p(H) \subset \Pnm(\alpha_1)$. We can conclude that $p(H) = \Pnm(\alpha_1)$, (and hence $p(H^{\pm}) = \Pnm(\alpha_1)$), because otherwise there would be some pants decomposition $X'$ containing $\alpha_1$ which cannot be reached from $X$ via a series of switches not involving $\alpha_1$; if this was the case, then $\Pnm(\alpha_1)$ could not be a subtree of $\Pnm$, contradicting Lemma \ref{lem:commoncurvepath}.
\end{proof}

Via Lemma \ref{lem:twisthypcurves}, we are then justified in the notation $H(\alpha_1,[\delta_1])$ for the twist hyperplanes determined by $\alpha_1$ and $[\delta_1]$. We will denote the two combinatorial twist hyperplanes associated to a twist hyperplane $H(\alpha_1,[\delta_1])$ by $C(\alpha_1,[\delta_1])$ and $C(\alpha_1,[T_{\alpha_1}(\delta_1)])$. Here, $C(\alpha_1,[\delta_1])$ is the combinatorial hyperplane associated to $H(\alpha_1,[\delta_1])$ such that the duals to $\alpha_1$ are contained in $[\delta_1]$, and $C(\alpha_1,[T_{\alpha_1}(\delta_1)])$ is the combinatorial hyperplane such that the duals to $\alpha_1$ are contained in $[T_{\alpha_1}(\delta_1)]$.

In Figure \ref{figure:small-twisthyperplanes}, one can see two illustrations of the local structure of a twist hyperplane. Figure \ref{figure:twisthyperplanes} illustrates the non-empty intersection of a twist hyperplane with several twist flats.

\begin{figure}[htb]
\centering
\labellist
\small\hair 2pt
\pinlabel {$T_{\alpha_2}$} [ ] at 455 310
 \pinlabel {$T_{\alpha_1}$} [ ] at 150 110
 \pinlabel {$T_{\alpha_3}$} [ ] at 343 815
 \pinlabel {$\M(X)$} [ ] at 203 570
 \pinlabel {$\M(X')$} [ ] at 1345 570
 \pinlabel {$T_{\alpha_1}$} [ ] at 935 110
 \pinlabel {$T_{\delta_2}$} [ ] at 1456 310
 \pinlabel {$T_{\alpha_3}$} [ ] at 1125 825
 \pinlabel {$\M(X,X')$} [ ] at 960 460
 \pinlabel {$H(X,X')$} [ ] at 960 605
\endlabellist
\begin{tikzpicture}
\draw (0, 0) node[inner sep=0]
{\includegraphics[scale=.23]{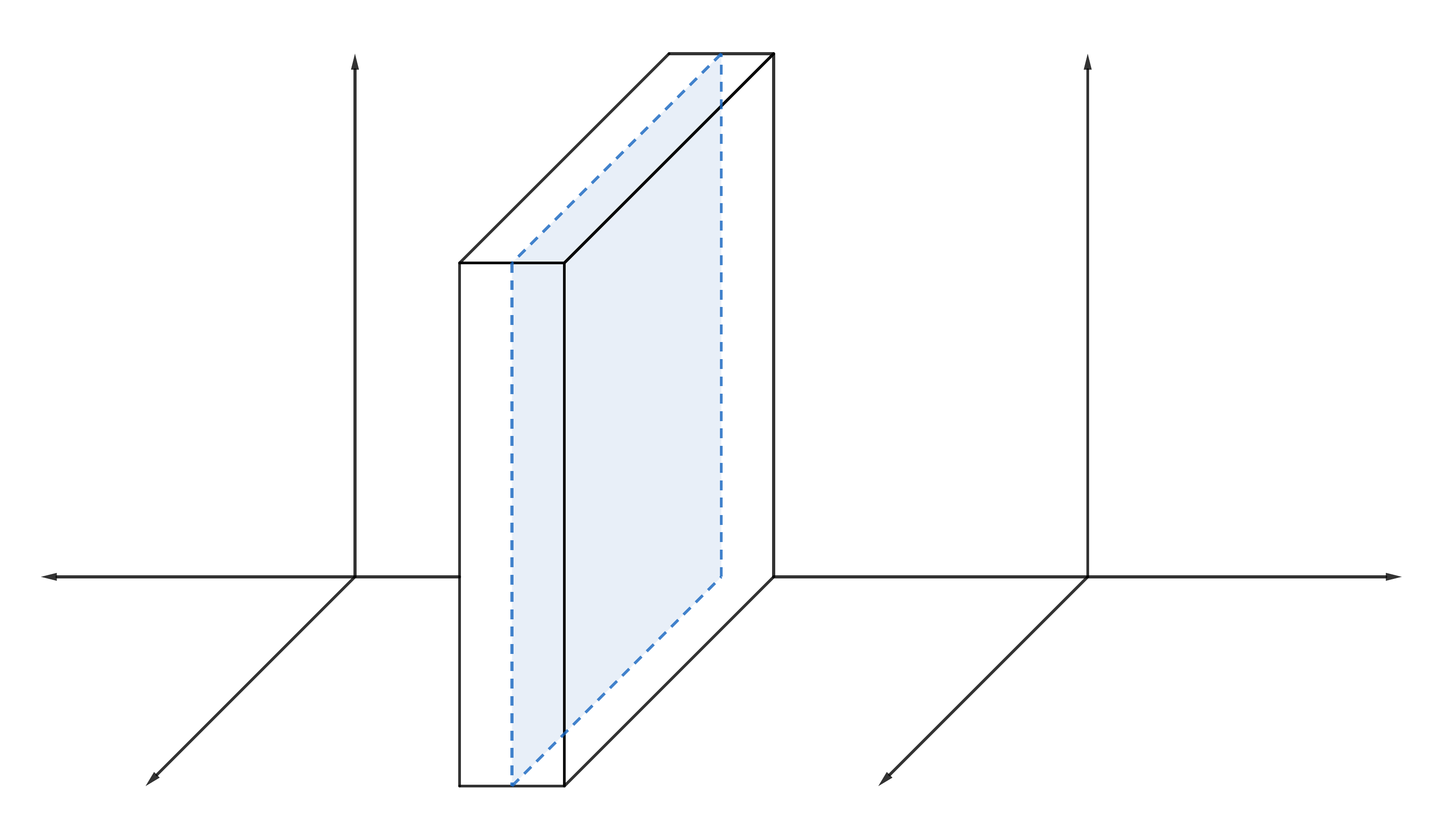}};
\draw [->] (.9,1.45) to [out=150,in=30] (-0.6,1.45);
\draw [->] (.9,-.1) to [out=-150,in=-30] (.15,-.1);
\end{tikzpicture}
\caption{A switch hyperplane $H(X,X')$ is entirely contained in the switch bridge $\M(X,X')$. Here $X = \{\alpha_1,\alpha_2,\alpha_3\}$ and $X' = \{\alpha_1,\delta_2,\alpha_3\}$. }
\label{figure:switchhyperplanes}
\end{figure}

\begin{figure}[htb]
\centering
\begin{subfigure}{.5\textwidth}
    \centering
    \labellist
    \small\hair 2pt
     \pinlabel {twist} [ ] at 290 830
     \pinlabel {twist} [ ] at 580 830
     \pinlabel \rotatebox{45}{switch} [ ] at 675 140
    \endlabellist
    
    \includegraphics[scale=.18]{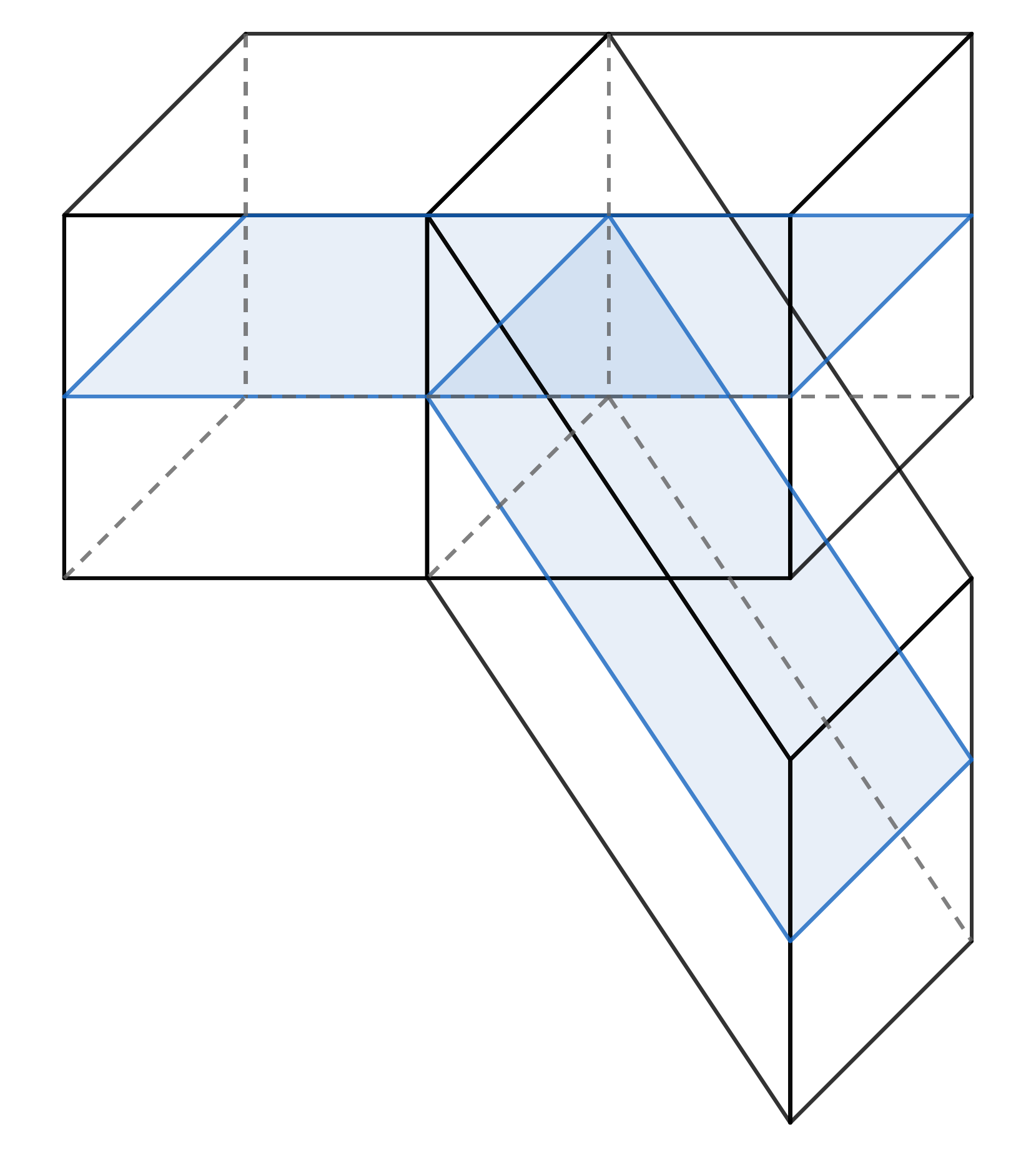} \hspace{1cm}
\end{subfigure}%
\begin{subfigure}{.5\textwidth}
\centering
    \labellist
    \small\hair 2pt
     \pinlabel {$m$} [ ] at 550 410
     \pinlabel {twist} [ ] at 1000 705
     \pinlabel {twist} [ ] at 30 165
     \pinlabel {switch} [ ] at 300 750
     \pinlabel {switch} [ ] at 790 190
    \endlabellist
    
    \begin{tikzpicture}
    \draw (0, 0) node[inner sep=0] {\includegraphics[scale=.18]{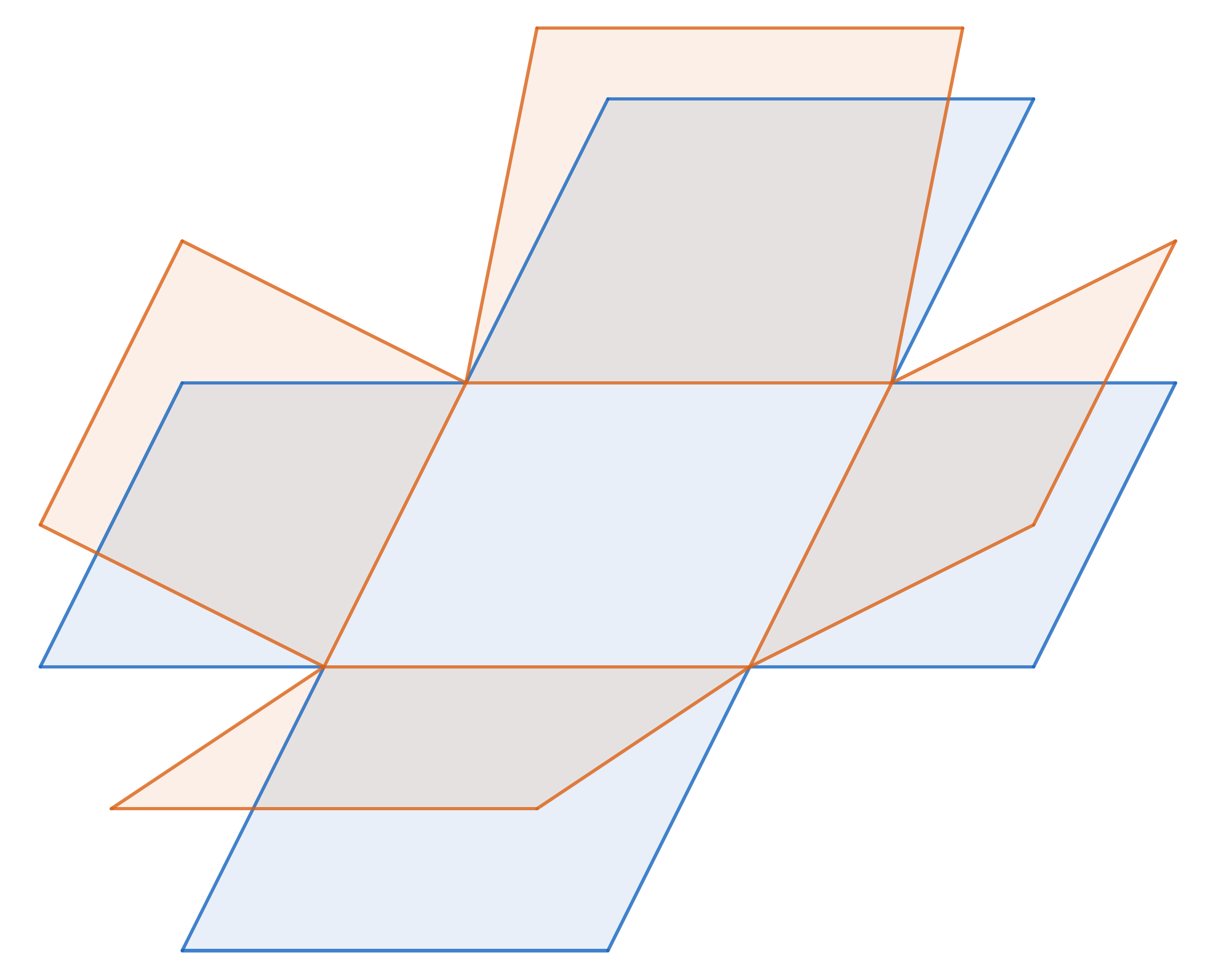}};
    \draw [->] (-1.5,1.6) to [out=240, in=120] (-1.6,.8);
    \draw [->] (-1.1,1.9) to [out=30, in=150] (-.3,1.9);
    \draw [->] (2.35,1.7) to [out=150, in=30]  (1.85,1.7);
    \draw [->] (3.05,1.55) to [out=-30, in=60] (2.9,.45);
    \draw [->] (1.4,-1.35) to [out=120,in=240] (1.4,-.4);
    \draw [->] (.9,-1.5) to [out=150,in=30] (-.6,-1.5);
    \draw [->] (-2.9,-1.85) to [out=-30, in=200]  (-1.7,-1.9);
    \draw [->] (-3.1,-1.5) to [out=60, in=210]  (-2.4,-.8);
    \end{tikzpicture}
\end{subfigure}

\caption{Pictured here are two representations of the local structure of a twist hyperplane. Any $2$-cube that is crossed by a twist hyperplane is contained in three $3$-cubes: two twist cubes contained in a single twist flat, and one switch cube, (pictured left). On the right we see that if $m$ is a midcube of a twist hyperplane that is contained in a twist cube, then it is connected to eight other midcubes: four midcubes contained in disjoint switch bridges, (orange), and four midcubes contained in twist cubes in the same twist flat as $m$, (blue).}
\label{figure:small-twisthyperplanes}
\end{figure}

\begin{figure}[htb]
\centering
\labellist \small\hair 2pt
 \pinlabel {$T_{\alpha_3}$} [ ] at 830 718
 \pinlabel {$T_{\alpha_1}$} [ ] at 665 148
 \pinlabel {$T_{\alpha_2}$} [ ] at 920 260
 \pinlabel {$H(\alpha_3,[\delta_3])$} [ ] at 249 690
 \pinlabel {$T_{\alpha_3}(\delta_3)$} [ ] at 775 561
 \pinlabel {$\delta_3$} [ ] at 820 471
 
  \pinlabel {$\M(X')$} [ ] at 188 250
 \pinlabel {$\M(X'')$} [ ] at 1320 250
 \pinlabel {$\M(X)$} [ ] at 620 220
 \pinlabel {$\M(X''')$} [ ] at 980 833
 
 \pinlabel {$\M(X,X')$} [ ] at 358 60
 \pinlabel {$\M(X,X'')$} [ ] at 994 60
 \pinlabel {$\M(X,X''')$} [ ] at 675 833
\endlabellist
\begin{tikzpicture}
\draw (0, 0) node[inner sep=0]
{\includegraphics[scale=.25]{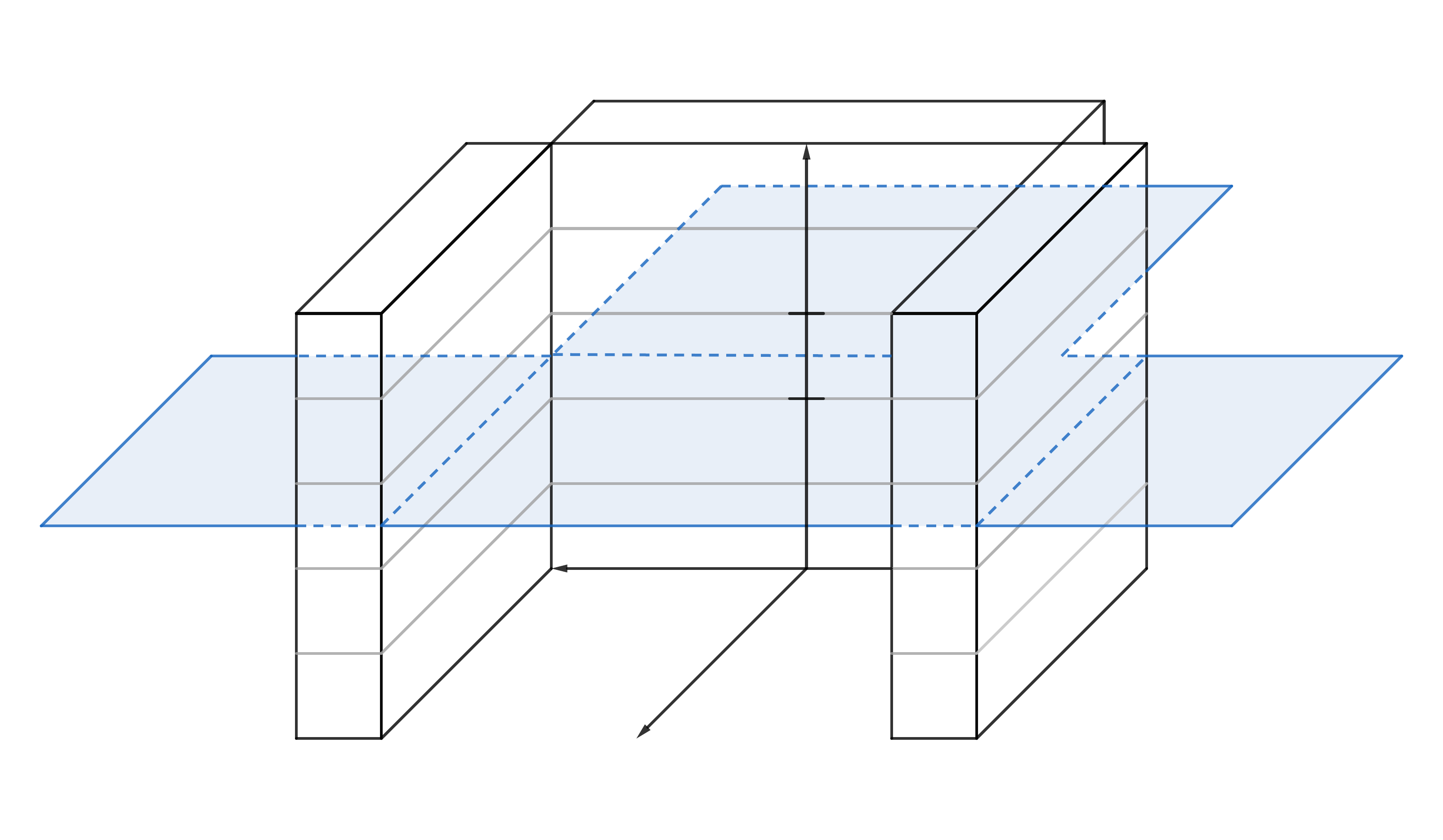}};
\draw [->] (-4.5,1.9) to  (-4.5,0);
\draw [->] (-1.4,3.2) to [out=260, in=150] (-1,2.7);
\draw [->] (-3.6,-3.2) to  (-3.6,-2.8);
\draw [->] (2,-3.2) to  (2,-2.8);
\end{tikzpicture}

\caption{A twist hyperplane $H(\alpha_3,[\delta_3])$ crossing four twist flats $\M(X)$, $\M(X')$, $\M(X'')$ , and $\M(X''')$, and three switch bridges $\M(X,X')$, $\M(X,X'')$, and $\M(X,X''')$. Notice also that $N(H(\alpha_3,[\delta_3]))$ contains vertices with dual curves $\delta_3$ and $T_{\alpha_3}(\delta_3)$.}
\label{figure:twisthyperplanes}
\end{figure}

\subsection{Parallelism classes of combinatorial hyperplanes}\label{subsec:parallelhyperplanes}

In this section, we determine the parallelism classes for the two types of combinatorial hyperplanes in $\M$. Recall that the parallelism class of a combinatorial hyperplane $C$ is the equivalence class of all convex subcomplexes that are parallel to $C$. In a general $\cat$ cube complex, it is not necessarily true that all convex subcomplexes that are parallel to a combinatorial hyperplane are themselves combinatorial hyperplanes, but we will see that this is the case in $\M$.

\begin{lemma}\label{lem:twistcombparallel}
The parallelism class for a combinatorial twist hyperplane $C(\alpha_1,[\delta_1])$ is the collection $[C(\alpha_1,[\delta_1])] = \bigcup_{k\in\Z} \{ T_{\alpha_1}^k(C(\alpha_1,[\delta_1]))\}$. In other words, $[C(\alpha_1,[\delta_1])]$ consists of all combinatorial twist hyperplanes whose image under the projection $p:\M\to\Pnm$ is $\Pnm(\alpha_1)$.
\end{lemma}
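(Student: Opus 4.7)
My plan is to prove the two equalities in the lemma in three steps: establishing the forward containment, identifying the combinatorial twist hyperplanes over $\Pnm(\alpha_1)$ with the $T_{\alpha_1}$-translates of $C(\alpha_1,\delta_1)$, and then excluding all other convex subcomplexes from the parallelism class.

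For the forward direction, since $T_{\alpha_1}$ fixes every pants decomposition $X$ containing $\alpha_1$ (as the other two pants curves are disjoint from $\alpha_1$) and acts on the twist flat $\M(X)$ by a unit translation in the $T_{\alpha_1}$-direction, I have $T_{\alpha_1}^k(C(\alpha_1,\delta_1)) = C(\alpha_1, T_{\alpha_1}^k(\delta_1))$ for every $k \in \Z$. For each $k$, the combinatorial hyperplanes $C(\alpha_1, T_{\alpha_1}^k(\delta_1))$ and $C(\alpha_1, T_{\alpha_1}^{k+1}(\delta_1))$ are the two combinatorial hyperplanes associated to the twist hyperplane $H(\alpha_1, T_{\alpha_1}^k(\delta_1))$, hence parallel. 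Chaining consecutive translates and using transitivity of parallelism puts every $T_{\alpha_1}^k(C(\alpha_1,\delta_1))$ into $[C(\alpha_1,\delta_1)]$. Each such translate is by construction a combinatorial twist hyperplane, and since $p(H(\alpha_1,\delta')) = \Pnm(\alpha_1)$ for any dual $\delta'$ to $\alpha_1$, each projects to $\Pnm(\alpha_1)$ under $p$.

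For the reverse containment, suppose $F$ is any convex subcomplex parallel to $C(\alpha_1,\delta_1)$. By Lemma \ref{lem:HHS-2.4}, $F$ crosses exactly the hyperplanes crossed by $C(\alpha_1,\delta_1)$. Using the classification of hyperplanes from Section \ref{subsec:hyperplanes}, I would check that the hyperplanes \emph{not} crossed by $C(\alpha_1,\delta_1)$ are precisely the twist hyperplanes $H(\alpha_1,\delta')$ (the ``parallel'' direction in each twist flat) and the switch hyperplanes $H(X,X')$ where $X'$ is obtained from $X$ by switching along $\alpha_1$ (the boundary of $p^{-1}(\Pnm(\alpha_1))$). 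Consequently $F$ also avoids these families, which forces $F \subset p^{-1}(\Pnm(\alpha_1))$ and constrains $F \cap \M(X)$ to a single $T_{\alpha_1}$-level plane at some level $k$ for each $X \in \Pnm(\alpha_1)^{(0)}$. The fact that cleanup commutes with Dehn twists (property (iii) of dual systems together with \cite[Lemma 6.3]{hamHenDehn}) guarantees that the level $k$ is consistent across switch bridges in $p^{-1}(\Pnm(\alpha_1))$.

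It follows that $F$ and $T_{\alpha_1}^k(C(\alpha_1,\delta_1))$ share a vertex and are both parallel to $C(\alpha_1,\delta_1)$. By the slab characterization in Lemma \ref{lem:HHS-2.4}, two parallel convex subcomplexes sharing a $0$-cube must coincide, since the cubical isometric embedding $F \times [0,a] \to \M$ would have to identify $(x,0)$ with some $(y,a)$ at slab-distance $\geq a$, forcing $a = 0$. Therefore $F = T_{\alpha_1}^k(C(\alpha_1,\delta_1))$, completing both equalities. The main obstacle I expect is in the second step: tracking precisely which hyperplanes $C(\alpha_1,\delta_1)$ crosses requires careful local bookkeeping at vertices, using the link structure sketched in Figure \ref{figure:small-twisthyperplanes} together with how combinatorial twist hyperplanes propagate across switch bridges via cleanup.
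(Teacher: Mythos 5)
Your proof is correct and follows essentially the same route as the paper's: you show that any parallel $F$ must cross exactly the hyperplanes crossed by $C(\alpha_1,\delta_1)$, deduce that $p(F)=\Pnm(\alpha_1)$ and that $F$ lies in a single twist slab, and then conclude that $F$ is a full combinatorial twist hyperplane, with the explicit forward containment and the shared-vertex-forces-$a=0$ clincher being fine elaborations of what the paper leaves implicit. Two small blemishes: the fact that parallel convex subcomplexes cross exactly the same hyperplanes is the \emph{definition} of parallelism rather than a consequence of Lemma~\ref{lem:HHS-2.4}, and your claim that the hyperplanes not crossed by $C(\alpha_1,\delta_1)$ are ``precisely'' the $H(\alpha_1,\cdot)$ twist hyperplanes and the $\alpha_1$-switch hyperplanes should be qualified to hyperplanes whose carriers meet $C(\alpha_1,\delta_1)$ (there are of course many other distant hyperplanes it also fails to cross).
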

\begin{proof}
    Let $C(\alpha_1,[\delta_1])$ be any combinatorial twist hyperplane, and fix any $F\in [C(\alpha_1,[\delta_1])]$. Because $p(C(\alpha_1,[\delta_1])) = \Pnm(\alpha_1)$, (via Lemma \ref{lem:twisthypcurves}), we know the switch bridges crossed by $C(\alpha_1,[\delta_1])$ correspond exactly to the edges of $\Pnm(\alpha_1)$.
    Because $F$ is parallel to $C(\alpha_1,[\delta_1])$, $F$ must also cross every switch bridge corresponding to an edge of $\Pnm(\alpha_1)$. Moreover, because $F$ intersects the same switch hyperplanes as $C(\alpha_1,[\delta_1])$, it must also intersect all of the same twist flats. Therefore, $p(F) = \Pnm(\alpha_1)$.
    
    We also know that $C(\alpha_1,[\delta_1])$ does not cross the twist hyperplanes $H(\alpha_1,[T^{n_1}_{\alpha_1}(\delta_1)])$, where $n_1\in \Z$. This means that $F$ must be contained in some combinatorial twist hyperplane $C(\alpha_1,[T^{n_1}_{\alpha_1}(\delta_1)])$. Via the cubical isometric embedding described in Lemma \ref{lem:HHS-2.4}, $F$ must be an entire twist hyperplane $C(\alpha_1,[T^{n_1}_{\alpha_1}(\delta_1)])$. Therefore, $F$ is a combinatorial twist hyperplane with $p(F) = \Pnm(\alpha_1)$. The combinatorial twist hyperplanes whose projections to $\Pnm$ is $\Pnm(\alpha_1)$ are exactly the combinatorial hyperplanes $\bigcup_{k\in\Z} \{ T_{\alpha_1}^k(C(\alpha_1,[\delta_1]))\}$. Hence, the parallelism class of $C(\alpha_1,[\delta_1])$ corresponds exactly to the combinatorial twist hyperplanes in $\bigcup_{k\in\Z} \{ T_{\alpha_1}^k(C(\alpha_1,[\delta_1]))\}$.
\end{proof}

\begin{cor}\label{cor:twistsimplytrans}
    The action of $\langle T_{\alpha_1} \rangle$ on $[C(\alpha_1,[\delta_1])]$ is simply transitive, (ie is transitive and free). Consequently, the set $[C(\alpha_1,[\delta_1])]$ has infinite cardinality.
\end{cor}
\begin{proof}
    By Lemma \ref{lem:twistcombparallel}, if $C(\alpha_1,[\delta_1])$ and  $C(\alpha_1,[\delta_1'])$ are parallel twist hyperplanes, then we know that $C(\alpha_1,[\delta_1']) = C(\alpha_1,[T_{\alpha_1}^{n_1}(\delta_1)])$ for exactly one $n_1\in\Z$. Hence, the action of $\langle T_{\alpha_1}\rangle$ on $[C(\alpha_1,[\delta_1])]$ is simply transitive.
    
    Since $\langle T_{\alpha_1}\rangle$ has infinite order and acts simply transitively on $[C(\alpha_1,[\delta_1])]$, it follows that $[C(\alpha_1,[\delta_1])]$ has infinite cardinality.
\end{proof}

\begin{lemma}\label{lem:switchcombparallel}
    Let $X = \{\alpha_1,\alpha_2,\alpha_3\}$ be a pants decomposition of non-separating meridians, and let $\delta_1$ be a dual to $\alpha_1$. The parallelism class for the switch combinatorial hyperplane $C(X,\delta_1)$ consists of all combinatorial switch hyperplanes that correspond to edges in $\Pnm(\{\alpha_2,\alpha_3\})$.
\end{lemma}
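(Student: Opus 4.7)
The plan is to mimic the proof of Lemma \ref{lem:twistcombparallel}: I will first characterize the hyperplanes that cross $C := C(X, \delta_1)$, and then use Lemma \ref{lem:HHS-2.4} to pin down which convex subcomplexes can be parallel to $C$. Throughout, let $\Delta = \{\delta_1, \delta_2, \delta_3\}$ be a dual system for $X = \{\alpha_1, \alpha_2, \alpha_3\}$, and recall that $C$ is a $2$-dimensional subcomplex of $\M(X)$ spanning the $T_{\alpha_2}T_{\alpha_3}$-directions at a fixed $T_{\alpha_1}$-level.

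From the construction of $\M$ in Section \ref{subsec:model}, the hyperplanes crossing $C$ are precisely the twist hyperplanes $H(\alpha_2, T_{\alpha_2}^{n_2}(\delta_2))$ and $H(\alpha_3, T_{\alpha_3}^{n_3}(\delta_3))$ for $n_2, n_3 \in \Z$; no switch hyperplane crosses $C$, because $C$ lies inside the twist flat $\M(X)$ while switch hyperplanes sit in the interiors of switch bridges. Consequently any $F \in [C]$ is a $2$-dimensional convex subcomplex crossed by these same twist hyperplanes. Since $F$ must cross some $H(\alpha_2, \cdot)$ and some $H(\alpha_3, \cdot)$, and the carriers of these twist hyperplanes meet a twist flat $\M(Y)$ only when $\alpha_2, \alpha_3 \in Y$, it follows that $F$ is contained in a twist flat $\M(Y)$ with $Y \in \Pnm(\{\alpha_2, \alpha_3\})^{(0)}$.

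By Lemma \ref{lem:HHS-2.4}, $F$ sits at the other end of a cubical isometric embedding $C \times [0, a] \to \M$ whose fibers cross exactly the hyperplanes separating $C$ from $F$. Each such separating hyperplane $H$ satisfies $C \subset N(H)$ and $C \cap H = \emptyset$, which together force $C$ to be contained in one of the two combinatorial hyperplanes $H^{\pm}$. A case analysis on the hyperplanes of $\M$ shows the only candidates are the switch hyperplane $H(X, X')$ with $X' = \{\delta_1, \alpha_2, \alpha_3\}$ (for which $C = C(X, \delta_1)$ itself is a combinatorial hyperplane of $H(X, X')$) and the two twist hyperplanes $H(\alpha_1, T_{\alpha_1}^{k}(\delta_1))$ for $k \in \{-1, 0\}$ (whose combinatorial twist hyperplane $C(\alpha_1, \delta_1)$ strictly contains $C$ as its intersection with $\M(X)$). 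Crossing any of these yields a parallel combinatorial switch hyperplane associated to an edge of $\Pnm(\{\alpha_2, \alpha_3\})$: crossing $H(X, X')$ gives the partner combinatorial hyperplane $C(X', \alpha_1) \subset \M(X')$ for the edge $X$--$X'$, while crossing $H(\alpha_1, T_{\alpha_1}^{k}(\delta_1))$ gives $C(X, T_{\alpha_1}^{k+1}(\delta_1))$, the combinatorial switch hyperplane for the edge between $X$ and $\{T_{\alpha_1}^{k+1}(\delta_1), \alpha_2, \alpha_3\}$ in $\Pnm(\{\alpha_2, \alpha_3\})$.

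Iterating this argument at each newly reached combinatorial switch hyperplane, and invoking that $\Pnm(\{\alpha_2, \alpha_3\})$ is a subtree of $\Pnm$ by \cite[Corollary 5.11]{hamHenDehn}, induction along the subtree identifies $[C]$ exactly with the set of combinatorial switch hyperplanes corresponding to edges in $\Pnm(\{\alpha_2, \alpha_3\})$; the reverse containment is obtained by running the same chain of parallel moves backwards. The main obstacle is the case analysis: ruling out switch hyperplanes $H(X, Y)$ for $Y$ obtained by switching $\alpha_2$ or $\alpha_3$, whose carriers satisfy $N(H(X, Y)) \cap \M(X) = C(X, \delta)$ for $\delta \in \{\delta_2, \delta_3\}$ (a different $2$-dimensional slice of $\M(X)$ meeting $C$ only in a $1$-dimensional subcomplex, so $C \not\subset N(H(X, Y))$); as well as switch hyperplanes $H(X, Y^{(k)})$ for $Y^{(k)} = \{T_{\alpha_1}^{k}(\delta_1), \alpha_2, \alpha_3\}$ with $k \ne 0$ (attaching at a different $T_{\alpha_1}$-level than $C$), and twist hyperplanes in the $T_{\alpha_2}$ or $T_{\alpha_3}$ directions (which cross $C$ rather than containing it in their carriers).
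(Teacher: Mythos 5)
Your proof is correct and tracks the same essential strategy as the paper's: characterize the hyperplanes crossed (and not crossed) by $C = C(X,\delta_1)$, use the fact that parallel subcomplexes cross identical hyperplanes to localize any $F \in [C]$ to a twist flat over $\Pnm(\{\alpha_2,\alpha_3\})$, and then invoke Lemma~\ref{lem:HHS-2.4} to upgrade ``contained in a combinatorial switch hyperplane'' to ``equal to a combinatorial switch hyperplane.'' Where you differ is that you unwind Lemma~\ref{lem:HHS-2.4} into an explicit step-by-step chain: you classify the hyperplanes osculating $C$ and argue by induction along the product $C \times [0,a]$, one wall-crossing at a time. The paper instead concludes in one stroke that $F$ doesn't cross any twist hyperplanes of the form $H(\alpha_1,\cdot)$, so $F$ sits inside some $C(X',\delta_1')$, and applies Lemma~\ref{lem:HHS-2.4} once. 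Both work; your chain argument makes the structure of the factor $E_C$ more visible (essentially giving a second proof of Corollary~\ref{cor:switchsimplytrans} for free), at the cost of being longer and requiring care that the inductive invariant is maintained.

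Two small slips to fix in the writeup. First, you write that ``each such separating hyperplane $H$ satisfies $C \subset N(H)$,'' but this holds only for the separating hyperplane \emph{closest} to $C$; the others osculate with later slices $C \times \{i\}$, not with $C$ itself. Your iteration implicitly patches this, but the sentence as stated is false and should say ``the first hyperplane separating $C$ from $F$.'' Second, the index formula is off by one on the negative side: crossing $H(\alpha_1,T_{\alpha_1}^{-1}(\delta_1))$ from the $C(\alpha_1,\delta_1)$ side lands on $C(X,T_{\alpha_1}^{-1}(\delta_1))$, not on $C(X,T_{\alpha_1}^{0}(\delta_1)) = C$. Neither issue is conceptual, and the surrounding case analysis (ruling out the switch hyperplanes at other $T_{\alpha_1}$-levels and those in the $\alpha_2,\alpha_3$ directions) is right. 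For the reverse inclusion it would help to be slightly more explicit than ``run the chain backwards'': connectedness of the tree $\Pnm(\{\alpha_2,\alpha_3\})$ lets you chain from $C$ to any target combinatorial switch hyperplane by wall-crossings of the type you classified, or alternatively one can argue as the paper does, directly checking that any such combinatorial switch hyperplane crosses exactly the twist hyperplanes $H(\alpha_2,\cdot)$ and $H(\alpha_3,\cdot)$.
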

\begin{proof}
    Because every switch hyperplane is contained in its own switch bridge and because combinatorial switch hyperplanes do not cross switch bridges, $C(X,\delta_1)$ does not cross any switch hyperplanes. This means that if $F\in [C(X,\delta_1)]$, it must be contained in a single twist flat. Additionally, $F$ does not cross any twist hyperplanes of the form $H(\alpha_1,[\delta_1'])$, where $\delta_1'$ is any dual to $\alpha_1$. We can see this via the definition of parallelism and the fact that $C(X,\delta_1)$ does not cross any hyperplanes of the form $H(\alpha_1,[\delta_1'])$. Since $F$ must be contained in a single twist flat and does not cross any twist hyperplane of the form $H(\alpha_1,[\delta_1'])$, it follows that $F$ must be contained in a combinatorial switch hyperplane of the form $C(X',\delta_1')$ where $X'\in\Pnm(\{\alpha_2,\alpha_3\})^{(0)}$ and $\delta_1'$ is a dual to $X'-\{\alpha_1,\alpha_2\}$. Via the cubical isometric embedding described in Lemma \ref{lem:HHS-2.4}, $F$ must actually be an entire combinatorial hyperplane $C(X',\delta_1')$, ie a combinatorial switch hyperplane corresponding to an edge in $\Pnm(\{\alpha_2,\alpha_3\})$.
    
    The two families of hyperplanes that have non-empty intersection with $C(X,\delta_1)$ are twist hyperplanes of the form $H(\alpha_2,[\delta_2])$ and $H(\alpha_3,[\delta_3])$, where $\delta_2$ and $\delta_3$ are any duals to $\alpha_2$ and $\alpha_3$ respectively. So in fact the parallelism class of $C(X,\delta_1)$ contains all switch combintorial hyperplanes corresponding to edges in $p(H(\alpha_2,[\delta_2]) \cap H(\alpha_3,[\delta_3])) =\Pnm(\{\alpha_2,\alpha_3\})$.
\end{proof}

\begin{cor}\label{cor:switchinfcard}
    Let $X$ and $\delta_1$ be as in Lemma \ref{lem:switchcombparallel}. The parallelism class $[C(X,\delta_1)]$ has infinite cardinality.
\end{cor}
\begin{proof}
    By Lemma \ref{lem:switchcombparallel}, we know that elements in $[C(X,\delta_1)]$ correspond exactly to edges in $\Pnm(\alpha_2,\alpha_3).$ We know that $\Pnm(\alpha_2,\alpha_3)$ must have infinite valence because there are infinitely many non-separating meridians disjoint from $\alpha_2 \cup \alpha_3.$ Since $\Pnm(\alpha_2,\alpha_3)$ has infinite valence, it must also have infinitely many edges. Henec, the cardinality of $[C(X,\delta_1)]$ is infinite.
\end{proof}

\subsection{Edges in the contact graph}\label{subsec:edgescontact}

Recall that if there is an edge between two hyperplanes $H_1$ and $H_2$ in a contact graph. then either $H_1$ and $H_2$ cross or they osculate. In this section we characterize all of the possible ways that we could have an edge between two hyperplanes in the contact graph.

\begin{lemma}\label{lem:edgescontact}
    Let $H_1$ and $H_2$ be two hyperplanes in $\M$. If there is an edge between $H_1$ and $H_2$ in $\contact{\M}$, then exactly one of the following holds.
    \begin{enumerate}
        \item $H_1$ and $H_2$ are two switch hyperplanes that osculate. In this case, they must be adjacent to a common twist flat and are not parallel.
        
        \item $H_1$ and $H_2$ are twist hyperplanes that osculate. In this case, $H_1$ and $H_2$ must actually be parallel to one another.
        
        \item $H_1$ and $H_2$ are twist hyperplanes that cross. In this case, $H_1 = H(\alpha_1, [\delta_1])$ and $H_2 = H(\alpha_2,[\delta_2])$ where $\alpha_1$ and $\alpha_2$ are distinct, disjoint non-separating meridians.
        
        \item $H_1$ is a twist hyperplane and $H_2$ is a switch hyperplane, and the two hyperplanes osculate. In this case, $H_2$ is parallel into $H_1$.
        
        \item $H_1$ is a twist hyperplane and $H_2$ is a switch hyperplane, and the two hyperplanes cross.
    \end{enumerate}
\end{lemma}
\begin{proof}
    Cases (1)-(5) above account for all possible combinations for the types of $H_1$ and $H_2$ and how they contact each other except for the case of two switch hyperplanes that cross. In fact, this case is not possible because every switch hyperplane is entirely contained in a distinct switch bridge. For the remainder of the proof, we show that each of the other combinations are possible, and provide characterizations of the hyperplanes in these cases.
    
    For case (1), suppose that $H_1$ and $H_2$ are two switch hyperplanes that osculate. We will show that they must be adjacent to a common twist flat and are not parallel, ie  we will see that $H_1 = H(X,X')$ and $H_2 = H(X,X'')$ with $X'\neq X''$. Here, when we say adjacent, we mean that $N(H_1)$ and $N(H_2)$ have non-empty intersection with a common twist flat. If $H_1$ and $H_2$ were not adjacent to a common twist flat, then $H_1^{\pm}$ and $H_2^{\pm}$ would all be contained in distinct twist flats, so $N(H_1) \cap N(H_2) = \emptyset.$ If $H_1$ and $H_2$ were adjacent to a common twist flat but were parallel, then there must be at least a copy of $\R^2\times \{-\frac{1}{2},\frac{1}{2}\}$ consisting entirely of twist cubes separating the switch bridges containing the two hyperplanes, so they could not osculate. Thus, the only way for two switch hyperplanes to osculate is if the are adjacent to a common twist flat and and are not parallel. See Figure \ref{figure:switch-switch} for an illustration of this case.
    
    \begin{figure}[htb]
        \centering
        \labellist
        \small\hair 2pt
         \pinlabel {$\M(X)$} [ ] at 203 570
         \pinlabel {$\M(X')$} [ ] at 1345 570
         \pinlabel {$\M(X'')$} [ ] at 935 -5
         \pinlabel {$H(X',X'')$} [ ] at 960 460
         \pinlabel {$H(X,X')$} [ ] at 960 600
        \endlabellist
        \begin{tikzpicture}
        \draw (0, 0) node[inner sep=0]
        {\includegraphics[scale=.23]{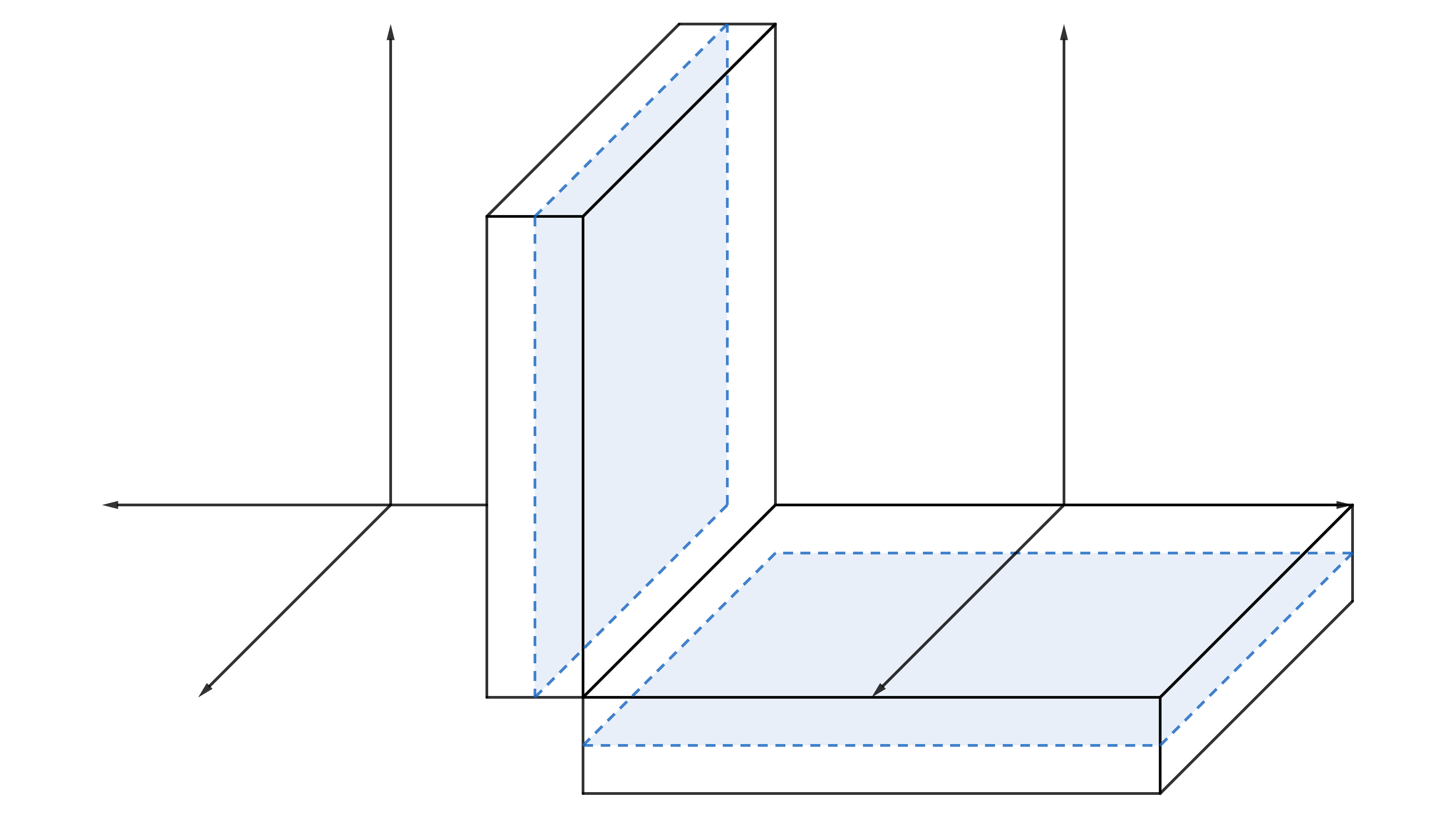}};
        \draw [->] (.9,1.55) to [out=150,in=30] (-0.6,1.55);
        \draw [->] (1.5,-.1) to (1.5,-1.5);
        \end{tikzpicture}
        \caption{An example of two switch hyperplanes that osculate, and are thus connected by an edge in $\contact{\M}$.}
        \label{figure:switch-switch}
    \end{figure}
    
    For case (2), suppose that $H_1$ and $H_2$ are twist hyperplanes that osculate. The only way for $H_1$ and $H_2$ to osculate is if there are non-separating meridians $\alpha$ and $\delta$ such that $H_1 = H(\alpha,[\delta])$ and $H_2= H(\alpha,[T_{\alpha}(\delta)]$, which are parallel hyperplanes by Lemma \ref{lem:twistcombparallel}. To see this, let $H_1 = H(\alpha,[\delta])$ and observe that in order for two twist hyerplanes to osculate, they must have non-empty intersection with a common twist flat, say $\M(X)$; otherwise, there would be a switch bridge separating the two hyperplanes and their carriers. Lemma \ref{lem:hypetype} (2) tells us that $H_1\cap \M(X)$ and $H_2\cap \M(X)$ are both isometric to $\R^2$ and are parallel to some coordinate planes in $\M(X)$; the only way for two such planes not to cross would be if they were parallel in $\M(X)$. Then because 
    \[(H_1^- \cup H_1^+) \cap M(X) = C(X,\delta') \cup C(X,T_{\alpha}(\delta'))\]
    for some $\delta'\in[\delta]$, and because $H_2 \cap \M(X)$ must be parallel to $H_1\cap \M(X)$, it follows that $N(H_1) \cap N(H_2)\cap \M(X)$ is equal to either $C(X,\delta')$ or $C(X,T_{\alpha}(\delta'))$; without loss of generality, suppose it is equal to $C(X,\delta')$. Then because hyperplanes in $\M$ are uniquely determined by a single $3$-cube, it must be the case that $H_2 = H(\alpha,[T^{-1}_{\alpha}(\delta)])$, which is parallel to $H_1$ by Lemma \ref{lem:twistcombparallel}. See Figure \ref{figure:twist-twist-osculate} for an illustration.
    
        \begin{figure}
        \centering
        \labellist \small\hair 2pt
         \pinlabel {$T_{\alpha_3}$} [ ] at 340 815
         \pinlabel {$T_{\alpha_3}$} [ ] at 1125 815
         \pinlabel {$T_{\delta_2}$} [ ] at 1460 308
         \pinlabel {$T_{\alpha_2}$} [ ] at 450 308
         \pinlabel {$T_{\alpha_1}$} [ ] at 145 100
         \pinlabel {$T_{\alpha_1}$} [ ] at 930 100
         \pinlabel {$H(\alpha_3,[T_{\alpha_3}(\delta_3)])$} [ ] at 1000 725
         \pinlabel {$H(\alpha_3,[\delta_3])$} [ ] at 980 243
        \endlabellist
        \begin{tikzpicture}
        \draw (0, 0) node[inner sep=0]
        {\includegraphics[scale=.22]{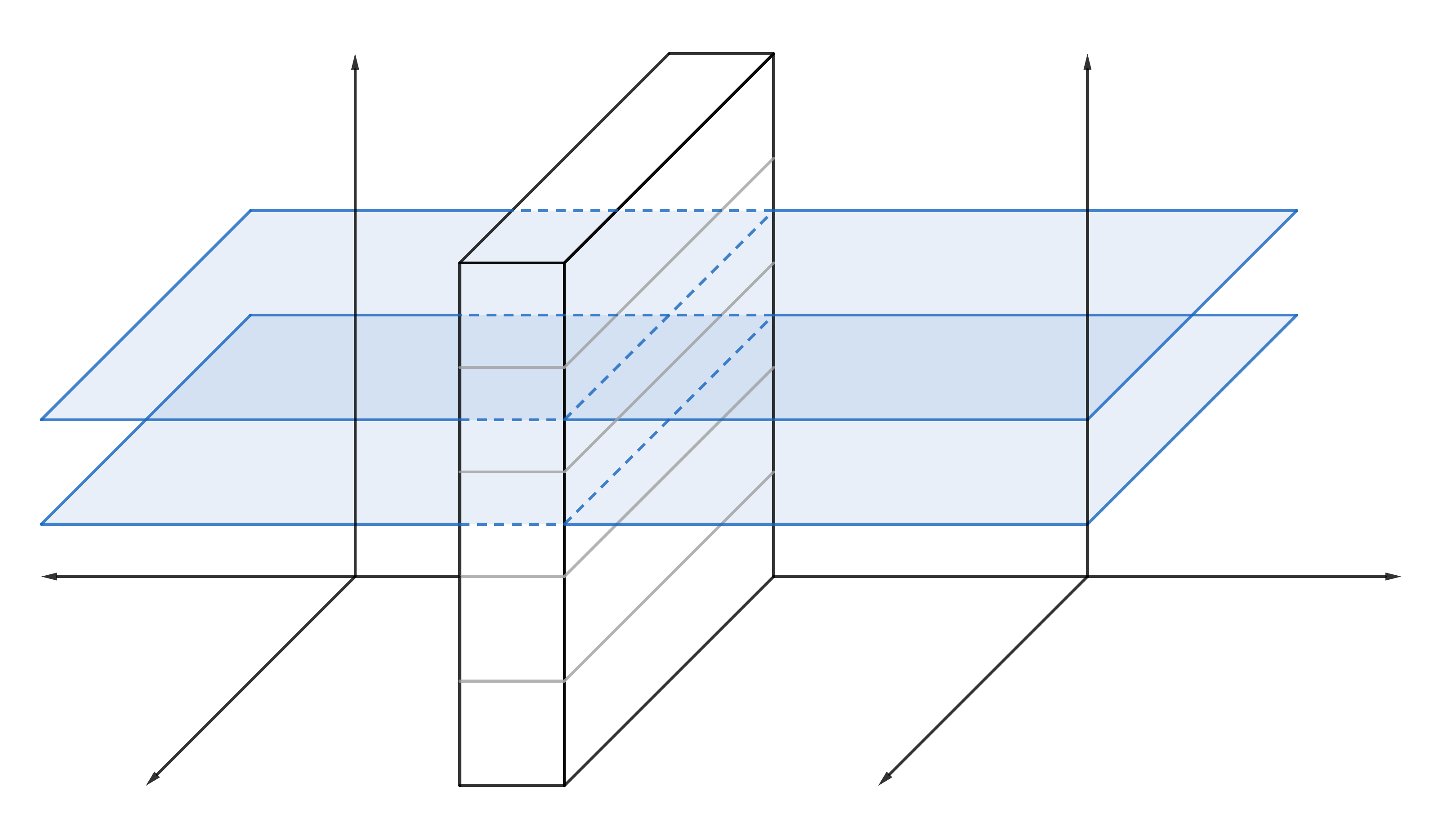}};
        \draw [->] (1.5,1.95) to  (1.5,1.4);
        \draw [->] (1.5,-1.45) to  (1.5,-.6);
        \end{tikzpicture}
        \caption{Two parallel twist hyperplanes $H(\alpha_3,[\delta_3])$ and $H(\alpha_3,[T_{\alpha_3}(\delta_3)])$ will osculate.}
        \label{figure:twist-twist-osculate}
    \end{figure}
    
    For case (3), suppose $H_1 = H(\alpha_1,[\delta_1])$ and $H_2 = H(\alpha_2,[\delta_2])$ are two twist hyperplanes that cross. If this is the case, then there is some pants decomposition $X$ such that $H_1 \cap H_2 \cap \M(X)$ is non-empty, implying that $\alpha_1,\alpha_2\in X$. Because $\alpha_1$ and $\alpha_2$ are in a pants decomposition together, they must be disjoint. Furthermore, Because $H_1$ and $H_2$ are not parallel, $\alpha_1$ and $\alpha_2$ must be distinct pants curves in $X$. See Figure \ref{figure:twist-twist-cross} for an illustration.
    
    \begin{figure}
        \centering
        \labellist
        \small\hair 2pt
         \pinlabel {$T_{\alpha_3}$} [ ] at 357 740
         \pinlabel {$T_{\alpha_3}$} [ ] at 1190 740
         \pinlabel {$T_{\delta_2}$} [ ] at 1415 265
         \pinlabel {$T_{\alpha_1}$} [ ] at 1030 60
         \pinlabel {$T_{\alpha_1}$} [ ] at 165 60
         \pinlabel {$T_{\alpha_2}$} [ ] at 470 265
         \pinlabel {$H(\alpha_3,[\delta_3])$} [ ] at 178 652
         \pinlabel {$H(\alpha_2,[\delta_2])$} [ ] at 1320 633
        \endlabellist
        \begin{tikzpicture}
        \draw (0, 0) node[inner sep=0]
        {\includegraphics[scale=.22]{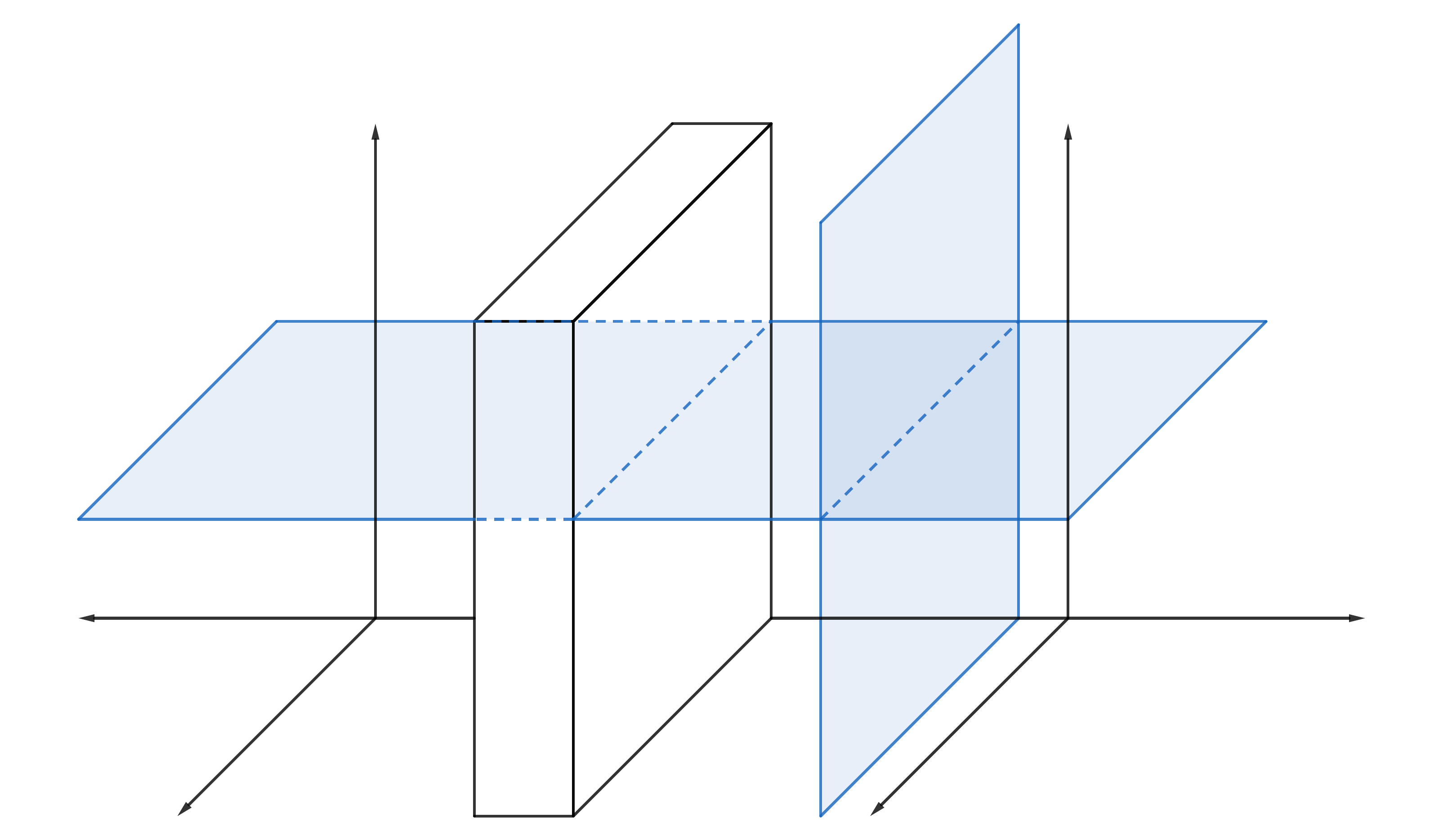}};
        \draw [->] (-4.5,1.4) to [out=-90, in=90] (-3.7,.4);
        \draw [->] (3.4,1.5) to  (2.2,1.5);
        \end{tikzpicture}
        \caption{Two twist hyperplanes $H(\alpha_2,[\delta_2])$ and $H(\alpha_3,[\delta_3])$ cross when $\alpha_2$ and $\alpha_3$ are disjoint, distinct non-separating meridians.}
        \label{figure:twist-twist-cross}
    \end{figure}
    
    For case (4), suppose that $H_1$ is a twist hyperplane and $H_2$ is a switch hyperplane, and that the two hyperplanes osculate. We will show that $H_2$ must be parallel into $H_1$. To see this, suppose $H_2 = H(X,X')$, where $X = \{\alpha_1, \alpha_2,\alpha_3\}$ and $X'= \{\delta_1, \alpha_2, \alpha_3\}$. Because $H_2 \subset \M(X,X')$, $H_1$ must have non-empty intersection with $\M(X)$ or $\M(X')$; without loss of generality, suppose $\M(X) \cap H_1 \neq \emptyset$. It then follows that $H_1 = H(\alpha_1,[\delta_1])$. From here it is clear that $H_2$ is parallel into $H_1$ because one of the combinatorial hyperplanes associated to $H_2$ is $C(X,\delta_1)$, which is contained in one of the combinatorial hyperplanes associated to $H_1$, (namely $C(\alpha_1,[\delta_1])$). See Figure \ref{figure:twist-switch-osculate} for an illustration.

    \begin{figure}
        \centering
        \labellist \small\hair 2pt
         \pinlabel {$T_{\alpha_1}$} [ ] at 90 210
         \pinlabel {$T_{\alpha_2}$} [ ] at 310 350
         \pinlabel {$T_{\alpha_3}$} [ ] at 235 690
         \pinlabel {$T_{\alpha_3}$} [ ] at 770 690
         \pinlabel {$T_{\delta_2}$} [ ] at 987 350
         \pinlabel {$T_{\alpha_1}$} [ ] at 635 210
         \pinlabel {$H(\delta_2,[\alpha_2])$} [ ] at 715 515
         \pinlabel {$H(X,X')$} [ ] at 350 690
         \pinlabel {$\M(X')$} [ ] at 900 515
         \pinlabel {$\M(X)$} [ ] at 150 515
        \endlabellist
        \begin{tikzpicture}
        \draw (0, 0) node[inner sep=0]
        {\includegraphics[scale=.3]{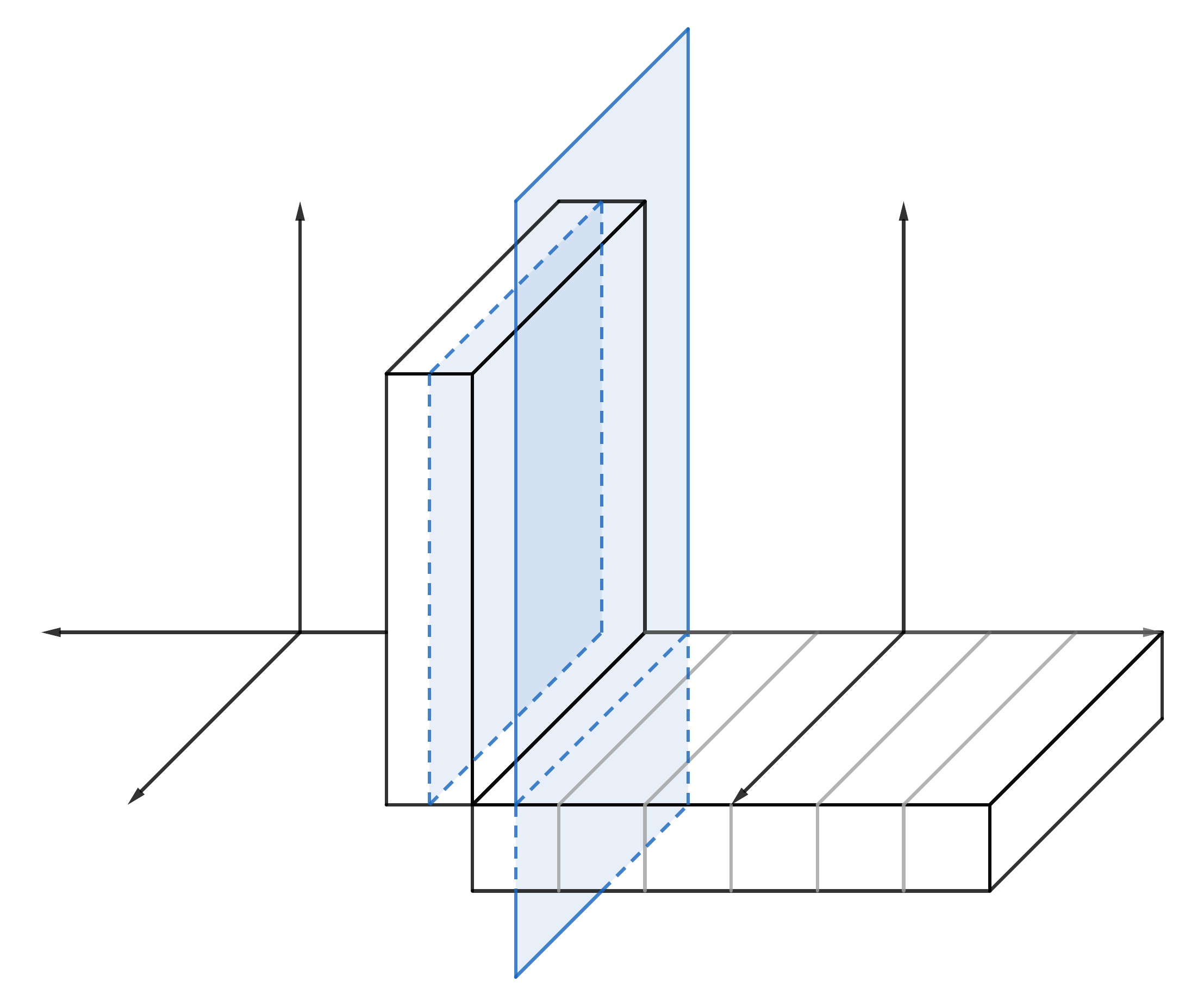}};
        \draw [->] (1.1,.8) to  (.6,.8);
        \draw [->] (-1.8,2.4) to [out=-90, in=90] (-1.2,1.4);
        \end{tikzpicture}
        \caption{The switch hyperplane $H(X,X')$ pictured here osculates with the twist hyperplane $H(\delta_2,[\alpha_2])$.}
        \label{figure:twist-switch-osculate}
    \end{figure}
    
    For case (5), let $(X,\Delta)\in\M(X)^{(0)}$ such that $(\alpha,\delta)$ is a part $(X,\Delta)$. The hyperplanes $H_1 = H(\alpha,[\delta])$ and $H_2 = H(X,\delta')$ where $\delta'$ is a dual to a curve in $X-\{\alpha\}$, will cross one another. See Figure \ref{figure:twist-switch-cross} for an illustration.
    
    \begin{figure}
        \centering
        \labellist
        \small\hair 2pt
        \pinlabel {$T_{\alpha_2}$} [ ] at 450 307
         \pinlabel {$T_{\alpha_1}$} [ ] at 150 107
         \pinlabel {$T_{\alpha_3}$} [ ] at 343 817
         \pinlabel {$\M(X)$} [ ] at 203 667
         \pinlabel {$\M(X')$} [ ] at 1345 667
         \pinlabel {$T_{\alpha_1}$} [ ] at 935 107
         \pinlabel {$T_{\delta_2}$} [ ] at 1456 307
         \pinlabel {$T_{\alpha_3}$} [ ] at 1125 817
         \pinlabel {$H(X,X')$} [ ] at 965 800
         \pinlabel {$H(\alpha_3,[\delta_3])$} [ ] at 965 662
        \endlabellist
        \begin{tikzpicture}
        \draw (0, 0) node[inner sep=0]
        {\includegraphics[scale=.23]{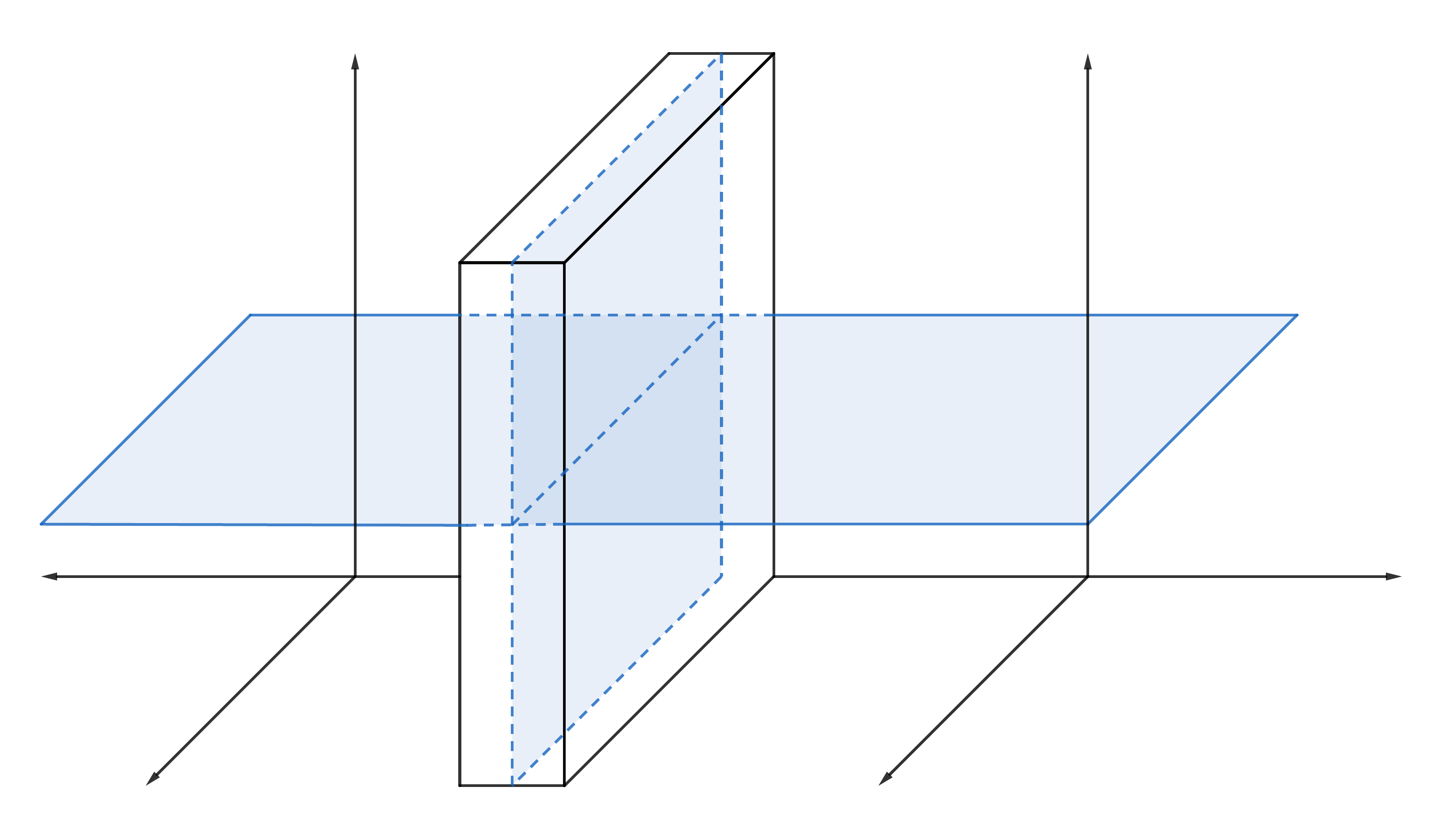}};
        \draw [->] (.75,2.55) to [out=-150,in=10] (-0.4,1.5);
        \draw [->] (1.55,1.4) to (1.55,.5);
        \end{tikzpicture}
        \caption{The twist hyperplane $H(\alpha_3,[\delta_3])$ crosses the switch hyperplane $H(X,X')$.}
        \label{figure:twist-switch-cross}
    \end{figure}
    \end{proof}
    
    The converse of case (3) in the previous lemma is also true.
    
    \begin{lemma}\label{lem:disjoint-implies-cross}
        Let $H_1 = H(\alpha_1,[\delta_1])$ and $H_2 = H(\alpha_2,[\delta_2])$ be two twist hyperplanes. If $\alpha_1$ and $\alpha_2$ are distinct, disjoint meridians, then $H_1$ and $H_2$ must cross.
    \end{lemma}
    \begin{proof}
        Suppose that $\alpha_1,\alpha_2$ are distinct, disjoint non-separating meridians. Then there is some pants decomposition $X$ containing both $\alpha_1$ and $\alpha_2$. To see this, notice that the complement $V_2-\{\alpha_1\cup\alpha_2\}$ is a genus $0$ handlebody with four spots, two spots corresponding to $\alpha_1$, and two spots corresponding to $\alpha_2$. Then any meridian $\eta$ on $V_2-\{\alpha_1\cup\alpha_2\}$ separating the two spots corresponding to $\alpha_1$ and separating the two spots corresponding to $\alpha_2$ will be a non-separating meridian on $V_2$ that is disjoint from both $\alpha_1$ and $\alpha_2$. Together, $X= \{\alpha,\beta,\eta\}$ is a non-separating pants decomposition on $V_2$.
        
        For both $i=1,2$, the hyperplane $H_i$ must have non-empty intersection with $\M(X)$ because $X\in\Pnm(\alpha_i)^{(0)}$ and $p(H_i) = \Pnm(\alpha_i)$, (by Lemma \ref{lem:twisthypcurves}). Additionally, because $\alpha_1$ and $\alpha_2$ are distinct, $H_1\cap \M(X)$ and $H_2\cap\M(X)$ must be planes that are parallel to distinct coordinate planes in $\M(X)$. Hence, $H_1\cap H_2 \cap \M(X)$ is non-empty and thus $H_1$ and $H_2$ must cross.
    \end{proof}

\section{Constructing a Factor System with Unbounded Products}\label{sec:fsandup}

In this section, we show that $\M$ contains a factor system, and that consequently $\hh$ is an HHG. Furthermore, we show that the factor system has unbounded products.

\subsection{Characterizing the hyperclosure}\label{subsec:candidate}

Let $\fs$ be the hyperclosure of $\M$. In this section, we will use Lemma \ref{lem:Hagen-2.2} to determine exactly what subcomplexes are contained in $\fs$.

\begin{prop}\label{prop:contentsoffs}
$\fs$ is equal to the set of subcomplexes of $\M$ of the following types:
\begin{enumerate}
    \item the whole space $\M$;
    \item subcomplexes $F_1\cap F_2$ such that $F_1$ and $F_2$ are (not necessarily distinct) combinatorial hyperplanes of $\M$ with non-empty intersection, and
    \item $0$-cubes of $\M$.
\end{enumerate}
\end{prop}

Let $\fs'$ denote the set containing all subcomplexes of the types listed in Proposition \ref{prop:contentsoffs}.

\begin{lemma}\label{lem:characterizefs'}
If $F\in\fs'$, then $F$ falls into one of the following categories:
\begin{enumerate}
    \item $F=\M$;
    \item $F$ is a combinatorial switch hyperplane;
    \item $F$ is a combinatorial twist hyperplane;
    \item $F = l(X,\delta,\delta')$ is the non-empty intersection of two combinatorial switch hyperplanes $C(X,\delta)$ and $C(X,\delta')$, and $F$ is isometric to $\R$;
    \item $F=t(\alpha,\alpha',[\delta],[\delta'])$ is the non-empty intersection of two combinatorial twist hyperplanes $C(\alpha,[\delta])$ and $C(\alpha',[\delta'])$, and $F$ is isometric to a tree that maps onto $\Pnm(\{\alpha,\alpha'\})$ via the map $p:\M\to\Pnm$; or
    \item $F$ is a $0$-cube of $\M$.
\end{enumerate}
Furthermore, $\fs'\subset \fs$.
\end{lemma}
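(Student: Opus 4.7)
The plan is to prove the two parts of the lemma in sequence. For the classification of $\fs'$, the approach is a case analysis guided by Section \ref{subsec:combhypint}. The whole space $\M$ and the $0$-cubes already fit categories (1) and (6), so the work is in describing the non-empty intersections $F_1 \cap F_2$ of combinatorial hyperplanes. I would split into cases based on whether each $F_i$ is a combinatorial switch or twist hyperplane, and invoke the intersection types catalogued in Section \ref{subsec:combhypint}. If $F_1 = F_2$, the intersection is a combinatorial hyperplane (category (2) or (3)); intersecting two distinct combinatorial switch hyperplanes yields a line $l(X,\delta,\delta')$ (category (4)); intersecting two distinct combinatorial twist hyperplanes yields a tree $t(\alpha,\alpha',\delta,\delta')$ (category (5)); and intersecting a twist with a switch hyperplane yields either a combinatorial switch hyperplane (category (2)) or such a line.

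For the containment $\fs' \subset \fs$, the strategy is to use the defining closure properties of the hyperclosure from Definition \ref{def:hyperclosure}. The whole space $\M$ and all combinatorial hyperplanes lie in $\fs$ directly via parts (1) and (2) of that definition. For any non-empty intersection $F_1 \cap F_2$ of combinatorial hyperplanes, Lemma \ref{lem:HHS-2.6} identifies the intersection with $\gate{F_1}{F_2}$, which lies in $\fs$ by part (3) of the definition. This immediately places every subcomplex of types (1) through (5) inside $\fs$.

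The main obstacle is handling the $0$-cubes: since the classification in Section \ref{subsec:combhypint} shows that no intersection of two combinatorial hyperplanes is a single $0$-cube, we cannot place them in $\fs$ by a single gating step. The remedy is to exploit closure of $\fs$ under iterated gating. Given any vertex $v = (X, \{\delta_1, \delta_2, \delta_3\})$, the line $l(X, \delta_1, \delta_2) = \gate{C(X,\delta_1)}{C(X,\delta_2)}$ lies in $\fs$ by the argument above, and intersecting it with the combinatorial switch hyperplane $C(X, \delta_3)$ picks out the unique vertex of $\M(X)$ whose dual system contains all three of $\delta_1, \delta_2, \delta_3$, namely $v$ itself. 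One final application of Lemma \ref{lem:HHS-2.6} then gives $\{v\} = \gate{l(X,\delta_1,\delta_2)}{C(X,\delta_3)} \in \fs$, completing the containment.
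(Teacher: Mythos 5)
Your proposal is correct and follows essentially the same route as the paper's proof: the classification reduces to the catalogue of intersections of combinatorial hyperplanes in Section~\ref{subsec:combhypint}, the containment of types (1)--(5) in $\fs$ follows from Definition~\ref{def:hyperclosure} together with Lemma~\ref{lem:HHS-2.6}, and the $0$-cubes are handled by iterated gating. The only cosmetic difference is that you express a vertex as $\gate{l(X,\delta_1,\delta_2)}{C(X,\delta_3)}$, whereas the paper writes it directly as the nested gate $\gate{C(X,\delta_1)}{\gate{C(X,\delta_2)}{C(X,\delta_3)}}$; both are the same two applications of property~(3) of Definition~\ref{def:hyperclosure} via Lemma~\ref{lem:HHS-2.6}.
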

\begin{proof}
It is clear by definition that $\fs'$ contains $\M$ and the $0$-cubes of $\M$. Additionally, $\fs'$ contains all combinatorial hyperplanes because any combinatorial hyperplane $C$ can be described as the intersection $C\cap C$. It remains to describe the intersections of two distinct combinatorial hyperplanes. To that end, suppose $F_1$ and $F_2$ are two distinct combinatorial hyperplanes that have non-empty intersection. Based on the types of the hyperplanes $F_1$ and $F_2$, we get three different cases.
\begin{enumerate}
    \item[(i)] Let $F_1=C(X,\delta)$ and $F_2=C(X,\delta')$ be two distinct combinatorial switch hyperplanes. Recall that every vertex in $C(X,\delta)$ must contain $\delta$ as a dual curve, and every vertex in $C(X,\delta')$ must contain $\delta'$ as a dual curve. Because $F_1$ and $F_2$ are both isometric to $\R^2$, (by Lemma \ref{lem:hypetype}), and are parallel to distinct coordinate planes in $\M(X)$, the intersection $F_1\cap F_2$ is then an isometrically embedded copy of $\R$ contained in $\M(X)^{(1)}$ such that every vertex contains both $\delta$ and $\delta'$ as dual curves. We will denote such a line by $l(X,\delta,\delta')$.
    
    \item[(ii)] Let $F_1=C(\alpha,[\delta])$ and $F_2=C(\alpha',[\delta'])$ be two distinct combinatorial twist hyperplanes. By the description of twist hyperplanes in Lemma \ref{lem:hypetype}, the intersection $F_1\cap F_2$ will be a tree $t(\alpha,\alpha',[\delta],[\delta'])$ such that every vertex contains both $\alpha$ and $\alpha'$ as pants curves, and such that the dual curves to $\alpha$ and $\alpha'$ will be in $[\delta]$ and $[\delta']$, respectively. Any non-empty intersection of $t(\alpha,\alpha',[\delta],[\delta'])$ with a twist flat will be a copy of $\R$, (in particular a line of the type described in (1)), and these copies of $\R$ will be connected across switch bridges to other copies of $\R$ via intervals $[-\frac{1}{2},\frac{1}{2}]$. In terms of the map $p:\M \to \Pnm$, $p|_{t(\alpha,\alpha',[\delta],[\delta'])}$ maps onto $\Pnm(\{\alpha,\alpha'\})$ and the fiber of a vertex $X\in \Pnm(\{\alpha,\alpha'\})^{(0)}$ is the line $l(X,\delta,\delta').$ In this way we can think of $t(\alpha,\alpha',[\delta],[\delta'])$ as a kind of ``blow-up" of the tree $\Pnm(\{\alpha,\alpha'\})$.
    
    \item[(iii)] Let $F_1= C(\alpha,[\delta])$ be a combinatorial twist hyperplane and let $F_2=C(X,\delta')$ be a combinatorial switch hyperplane. It is possible that $C(X,\delta')\subset C(\alpha,[\delta])$, in which case the intersection is just $C(X,\delta')$. Otherwise, the intersection of the two will be an isometrically embedded copy of $\R$ as in case (i) above, (since $C(X,\delta')\subset \M(X)$ and the intersection of $C(\alpha,[\delta])$ with $\M(X)$ is a combinatorial switch hyperplane).
\end{enumerate}

For the containment $\fs'\subset \fs$, notice that subcomplexes of types (1)-(3) are in $\fs$ because they satisfy properties (1) and (2) of Definition \ref{def:hyperclosure}, (so they are contained in every set $\mathfrak{G}$ as in Definition \ref{def:hyperclosure}). Additionally, by Lemma \ref{lem:HHS-2.6}, subcomplexes of types (4) and (5) are projections of combinatorial hyperplanes, so they are contained in $\fs$ via properties (2) and (3) of Definition \ref{def:hyperclosure}. Lastly, any $0$-cube $(X,\Delta)$, where $\Delta =\{\delta_1,\delta_2,\delta_3\}$, can be viewed as the intersection of the three combinatorial hyperplanes $\{C(X,\delta_i)\}_{i=1}^3$. Hence, by Lemma \ref{lem:HHS-2.6},
\[ (X,\Delta) = C(X,\delta_1) \cap C(X,\delta_2) \cap C(X,\delta_3) = \gate{C(X,\delta_1)}{\gate{C(X,\delta_2)}{C(X,\delta_3)}} \in \fs.\]
Therefore, $\fs'\subset \fs.$
\end{proof}

To continue towards proving Proposition \ref{prop:contentsoffs}, we prove the following lemma, which allows us to factor projections of subcomplexes of $\M$ through projections to combinatorial switch hyperplanes. Note that this is a version of Lemma 2.1 from \cite{Hagen_2020} that is specific to our context.

\begin{lemma}\label{lem:factorprojections} Suppose $\{C_i\}_{i=1}^{n}$ is a collection of combinatorial switch hyperplanes separating two subcomplexes $F_1,F_2\subset \M$. Suppose the $C_i$ are ordered by distance to $F_1$, with $C_n$ being closest to $F_1$ and $C_{1}$ being furthest from $F_1$. Then $\gate{F_1}{F_2}$ is parallel to $\gate{F_1}{\gate{C_n}{\cdots \gate{C_{1}}{F_2}\cdots}}$.
\end{lemma}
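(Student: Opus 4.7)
The plan is to verify the parallelism by showing that the two convex subcomplexes $\gate{F_1}{F_2}$ and $\gate{F_1}{\gate{C_n}{\cdots\gate{C_1}{F_2}\cdots}}$ are crossed by exactly the same collection of hyperplanes, at which point the characterization of parallelism in Lemma~\ref{lem:HHS-2.4} applies. The main technical tool will be iterated application of Lemma~\ref{lem:Hagen-1.5}.

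First I would set $G_0 = F_2$ and inductively define $G_k = \gate{C_k}{G_{k-1}}$ for $k=1,\ldots,n$, so that $G_n$ is the nested gate appearing in the statement. A short induction using Lemma~\ref{lem:Hagen-1.5} shows that a hyperplane $H$ crosses $G_k$ if and only if $H$ crosses each of $F_2, C_1, \ldots, C_k$; in particular, $H$ crosses $G_n$ iff $H$ crosses $F_2$ and every $C_i$. A final application of Lemma~\ref{lem:Hagen-1.5} then gives that $H$ crosses $\gate{F_1}{G_n}$ iff $H$ crosses $F_1$, $F_2$, and every $C_i$, while $H$ crosses $\gate{F_1}{F_2}$ iff $H$ crosses both $F_1$ and $F_2$. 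The containment of the second set of hyperplanes in the first is automatic, so the argument reduces to the converse: a hyperplane crossing both $F_1$ and $F_2$ must also cross each $C_i$.

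This is the step I expect to be the main obstacle, and it is where the separating hypothesis on the combinatorial switch hyperplanes is used essentially. Writing $C_i = H_i^{\pm}$ for its associated switch hyperplane $H_i$, the hypothesis says that $H_i$ separates $\M$ into two halfspaces, one containing $F_1$ and the other containing $F_2$. Since $H$ is connected and meets both halfspaces (via $H\cap F_1$ and $H\cap F_2$), $H$ must intersect $H_i$. The cubical structure then forces $H$ and $H_i$ to cross transversely inside a common cube $c$, and within $c$ the midcube defining $H$ meets both combinatorial hyperplanes $H_i^{\pm}$, so in particular $H\cap C_i \neq \emptyset$. Once this is in hand, the two sets of crossing hyperplanes coincide and Lemma~\ref{lem:HHS-2.4} yields parallelism.

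I would also note that the ordering of the $C_i$ by distance to $F_1$ is not strictly needed for the hyperplane-crossing bookkeeping above, but it reflects the geometric intuition that each successive gate moves the image one ``slab'' closer to $F_1$, which is what makes the composition $\gate{F_1}{\gate{C_n}{\cdots\gate{C_1}{F_2}\cdots}}$ the natural nested substitute for $\gate{F_1}{F_2}$.
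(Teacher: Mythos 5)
Your proposal is correct and reaches the same conclusion, but it is organized differently from the paper's proof, and in one place more sharply. The paper proves the statement by induction on $n$: the base case $n=1$ is argued in full, and the inductive step replaces $F_2$ by $\gate{C_1}{F_2}$ and invokes the hypothesis; this is exactly where the ordering of the $C_i$ by distance to $F_1$ is \emph{essentially used}, to guarantee that $\gate{C_1}{F_2}\subset C_1$ is still separated from $F_1$ by the remaining $n-1$ combinatorial switch hyperplanes. You instead collapse the whole induction by directly computing, via iterated use of Lemma~\ref{lem:Hagen-1.5}, the set of hyperplanes crossing $\gate{F_1}{G_n}$ (those crossing $F_1$, $F_2$, and every $C_i$) and the set crossing $\gate{F_1}{F_2}$ (those crossing $F_1$ and $F_2$), and reduce to the single combinatorial claim that any hyperplane crossing both $F_1$ and $F_2$ must cross each $C_i$. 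As you correctly note, this makes the ordering hypothesis irrelevant to the argument, so your version of the lemma is slightly stronger/cleaner. One small slip in wording: you say "containment of the second set in the first is automatic," but the automatic containment is of the first set in the second; the statement you then set out to prove is nevertheless the right one.

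Your handling of the key step is also where the two proofs differ most in flavor. The paper's justification that a hyperplane $H$ crossing both $F_1$ and $F_2$ must cross the separating $C$ appeals to the specific structure of $\M$ (``cannot cross any of the same twist flats, so... must be a twist hyperplane crossing $C$''). You instead give the general CAT(0) cube complex argument: the hyperplane $H_i$ underlying $C_i$ separates $\M$ into two open halfspaces with $F_1$ and $F_2$ on opposite sides; connectedness of $H$ together with $H\cap F_1\neq\emptyset$, $H\cap F_2\neq\emptyset$ forces $H\cap H_i\neq\emptyset$, so $H$ and $H_i$ cross in some cube $c$, and within $c$ the midcube of $H$ meets both faces $H_i^{\pm}\cap c$, giving $H\cap C_i\neq\emptyset$. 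This is more explicit and makes no use of the twist-flat/switch-bridge structure, which is arguably a gain in transparency. Both approaches close with the parallelism criterion of Lemma~\ref{lem:HHS-2.4} (via Lemma~\ref{lem:Hagen-1.5}), so the proofs converge there.
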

\begin{proof}
We prove this inductively, and use Lemma \ref{lem:Hagen-1.5}, which says that the hyperplanes crossing $\gate{F_1}{F_2}$ are exactly those crossing both $F_1$ and $F_2$.

Suppose for the base case that there is a combinatorial switch hyperplane $C$ separating $F_1$ and $F_2$. If $H$ is a hyperplane crossing both $F_1$ and $F_2$, (and therefore also $\gate{F_1}{F_2}$ by Lemma \ref{lem:Hagen-1.5}), then $H$ must cross $C$ as well; this follows from the fact that hyperplanes separate $\cat$ cube complexes into disjoint half-spaces. By Lemma \ref{lem:Hagen-1.5}, it follows that $H$ crosses $\gate{C}{F_2}$, and one more application of this lemma to $F_1$ and $\gate{C}{F_2}$ implies $H$ crosses $\gate{F_1}{{\gate{C}{F_2}}}$. So if a hyperplane $H$ crosses $\gate{F_1}{F_2}$, it must also cross $\gate{F_1}{{\gate{C}{F_2}}}$.

Now suppose instead that $H$ is a hyperplane that crosses $\gate{F_1}{{\gate{C}{F_2}}}$. By applying Lemma \ref{lem:Hagen-1.5} several times, we get that $H$ must cross $F_1$, $C$, and $F_2$. Since $H$ crosses both $F_1$ and $F_2$, applying Lemma \ref{lem:Hagen-1.5} one last time tells us that $H$ crosses $\gate{F_1}{F_2}$. We have now shown that any hyperplane $H$ crosses $\gate{F_1}{F_2}$ if and only if $H$ crosses $\gate{F_1}{{\gate{C}{F_2}}}$, and hence the two subcomplexes are parallel.

For the inductive step, assume that when $F_1$ and $F_2$ are separated by $n-1$ ordered combinatorial switch hyperplanes $\{C_i\}_{i=1}^{n-1}$, then $\gate{F_1}{F_2}$ is parallel to $\gate{F_1}{\gate{C_{n-1}}{\cdots \gate{C_{1}}{F_2}\cdots}}$. Now suppose $F_1$ and $F_2$ are separated by at least $n$ hyperplanes, and that $\{C_i\}_{i=1}^{n}$ is a collection of combinatorial switch hyperplanes separating the two subcomplexes, ordered so that $C_n$ is closest to $F_1$ and $C_{1}$ is closest to $F_2$. Then $\gate{C_{1}}{F_2}$ is a subcomplex that is separated from $F_1$ by $n-1$ hyperplanes. Applying the inductive hypothesis tells us that $\gate{F_1}{\gate{C_n}{\cdots \gate{C_{1}}{F_2}\cdots}}$ is parallel to $\gate{F_1}{\gate{C_{1}}{F_2}}$. By the same reasoning as the base case, $\gate{F_1}{\gate{C_{1}}{F_2}}$ is parallel to $\gate{F_1}{F_2}$. Since parallelism is an equivalence relation, it follows that $\gate{F_1}{F_2}$ is parallel to $\gate{F_1}{\gate{C_n}{\cdots \gate{C_{1}}{F_2}\cdots}}$.
\end{proof}

We also show that $\fs'$ is closed under parallelism.

\begin{lemma}\label{lem:closedparallel}
If $F\in\fs'$ and $F'\subset \M$ is a convex subcomplex that is parallel to $F$, then $F'\in\fs'$.
\end{lemma}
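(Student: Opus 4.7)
The plan is to proceed by cases according to the classification of subcomplexes in $\fs'$ given by Lemma \ref{lem:characterizefs'}, verifying in each case that any convex subcomplex $F'$ parallel to $F$ again lies in $\fs'$. The cases $F = \M$ and $F$ a $0$-cube are immediate: in the first, $F$ crosses every hyperplane of $\M$, so by Lemma \ref{lem:HHS-2.4} the cubical isometry $F \times [0,a] \to \M$ forces $F' = \M$; in the second, the same lemma shows $F'$ is $0$-dimensional, hence a $0$-cube. When $F = C(X,\delta)$ is a combinatorial switch hyperplane, Lemma \ref{lem:switchcombparallel} already identifies the full parallelism class as combinatorial switch hyperplanes associated to edges of $\Pnm(\{\alpha_2,\alpha_3\})$, each of which lies in $\fs'$; Lemma \ref{lem:twistcombparallel} handles the combinatorial twist case in the same way.

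The substantive cases are the intersections. Suppose $F = l(X,\delta,\delta')$ is a line with $X = \{\alpha_1,\alpha_2,\alpha_3\}$ and $\delta,\delta'$ duals to $\alpha_2,\alpha_3$ respectively. Then $F$ crosses precisely the twist hyperplanes $H(\alpha_1, T_{\alpha_1}^k(\delta_1))$ for $k \in \Z$ and no switch hyperplanes. Any $F' \sim F$ must cross the same hyperplanes, so $F'$ is disjoint from every switch hyperplane and is therefore contained in a single twist flat $\M(X'')$. Moreover $F'$ is a combinatorial line by Lemma \ref{lem:HHS-2.4}, and since its edges can only be crossed by twist hyperplanes labeled $\alpha_1$, it must be a line of $\alpha_1$-twist edges in $\M(X'')$. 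Such a line is exactly of the form $l(X'',\delta'',\delta''') = C(X'',\delta'') \cap C(X'',\delta''')$, with $\delta'',\delta'''$ duals to the other two pants curves of $X''$, and hence lies in $\fs'$.

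For $F = t(\alpha,\alpha',\delta,\delta')$ an analogous structural argument applies: $F$ crosses the switch hyperplanes along edges of the subtree $\Pnm(\{\alpha,\alpha'\})$ together with, in each twist flat $\M(X)$ over a vertex of this subtree, the twist hyperplanes in the direction of the third pants curve of $X$. Crucially, $F$ crosses no twist hyperplane labeled by $\alpha$ or $\alpha'$. If $F' \sim F$, then $p(F') = \Pnm(\{\alpha,\alpha'\})$ and $F' \cap \M(X)$ is a line with a single fixed dual for each of $\alpha,\alpha'$. These local lines must assemble across switch bridges, using the fact from \cite{hamHenDehn} that the cleanup function $c$ commutes with Dehn twists, so the choice of duals to $\alpha,\alpha'$ in one twist flat uniquely determines the duals in every adjacent twist flat by cleanup. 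Tracing this through the whole subtree yields global duals $\delta'',\delta'''$ with $F' = t(\alpha,\alpha',\delta'',\delta''') \in \fs'$.

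The main obstacle I anticipate is the tree case: making precise the bookkeeping that shows the locally-determined duals to $\alpha$ and $\alpha'$ in neighboring twist flats are related by the cleanup function, and hence glue into a single globally consistent subcomplex of the form $t(\alpha,\alpha',\cdot,\cdot)$. The other cases should follow cleanly from the parallelism classifications of Section \ref{subsec:parallelhyperplanes} together with Lemma \ref{lem:HHS-2.4}.
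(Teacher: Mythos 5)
Your proof is correct and follows essentially the same case-by-case strategy as the paper: handle $\M$ and $0$-cubes via Lemma~\ref{lem:HHS-2.4}, dispatch the combinatorial hyperplane cases via Lemmas~\ref{lem:twistcombparallel} and~\ref{lem:switchcombparallel}, and for the lines $l(\cdot)$ and trees $t(\cdot)$ use the crossed/avoided hyperplanes together with the isomorphism $F' \cong F$ from Lemma~\ref{lem:HHS-2.4} to pin down the structure of $F'$. The only real difference is in the tree case, where the paper concludes directly (and somewhat tersely) that $F'$ must be some $t(\alpha_1,\alpha_2,T_{\alpha_1}^{n_1}\delta_1,T_{\alpha_2}^{n_2}\delta_2)$ from the facts that $p(F') = \Pnm(\{\alpha_1,\alpha_2\})$, that $F'$ misses all $H(\alpha_1,\cdot)$ and $H(\alpha_2,\cdot)$, and that $F'\cong F$; you instead decompose $F'$ into its intersections with each twist flat and argue the local lines glue consistently via switch edges. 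The extra invocation of ``$c$ commutes with Dehn twists'' is not actually needed for the gluing — the compatibility of duals across twist flats is already forced by the fact that $F'$ is a connected subcomplex of $\M$, whose switch edges carry the cleanup data by construction — but it does no harm, and your version makes explicit a step the paper leaves implicit.
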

\begin{proof}
    We will prove this lemma in four cases.

    \textbf{Case 1:} $\M$ is contained in its own parallelism class since it is the only convex subcomplex of $\M$ that intersects every hyperplane. Additionally, all $0$-cubes are parallel to one another since they are the only convex subcomplexes that do not intersect any hyperplane. Hence, if $F=\M$ or $F$ is a $0$-cube, then $F'\in\fs'$.

    \textbf{Case 2:} Suppose $F$ is a combinatorial hyperplane. By Lemmas \ref{lem:twistcombparallel} and \ref{lem:switchcombparallel}, $F'$ must also be a combinatorial hyperplane. Hence, $F'\in\fs'$.
    
    \textbf{Case 3:} Suppose $F = l(X,\delta_1,\delta_2)$, where $X = \{\alpha_1,\alpha_2,\alpha_3\}$ and $\delta_1,\delta_2$ are duals to $\alpha_1,\alpha_2$, respectively. Because $F$ is contained entirely within a single twist flat, it does not intersect any switch hyperplanes. This means that $F'$ also does not intersect any switch hyperplanes and must therefore be contained in a single twist flat.
    The only hyperplanes that $F$ does intersect are twist hyperplanes of the form $H(\alpha_3,[T_{\alpha_3}^{n_3}(\delta_3)])$, where $\delta_3$ is a dual to $\alpha_3$ in $\M(X)$ and $n_3\in\Z$. Additionally, any twist hyperplane that crosses a hyperplane $H(\alpha_3,[T_{\alpha_3}^{n_3}(\delta_3)])$ does not cross $F$. In particular, any twist hyperplanes $H(\alpha_1',[\delta_1'])$ and $H(\alpha_2',[\delta_2'])$ such that $\{\alpha_1',\alpha_2',\alpha_3\}\in\Pnm(\alpha_3)^{(0)}$ do not cross $F$. Lemma \ref{lem:HHS-2.4} tells us that there is a cubical isometry from $F'$ to $F$. Combining the above facts implies that $F'$ is a line $l(X',\delta_1',\delta_2')$, where $X'\in\Pnm(\alpha_3)^{(0)}$ and $\delta_1',\delta_2'$ are duals to the curves in $X'-\{\alpha_3\}$. Hence, $F'\in \fs'$.
    
    \textbf{Case 4:} Suppose $F=t(\alpha_1,\alpha_2,[\delta_1],[\delta_2])$, and let $X = \{\alpha_1,\alpha_2,\alpha_3\}$, (so $F$ has non-empty intersection with $\M(X)$). First, we know that $F$ intersects switch hyperplanes that correspond to edges in $p(H(\alpha_1,[\delta_1])\cap H(\alpha_2,[\delta_2])) = \Pnm(\{\alpha_1,\alpha_2\})$, meaning that $F'$ must also intersect each of these switch hyperplanes. In particular, this means $p(F') = \Pnm(\{\alpha_1,\alpha_2\}).$ Additionally, $F$ does not intersect the twist hyperplanes $H(\alpha_1,[T_{\alpha_1}^{n_1}(\delta_1)])$ and $H(\alpha_2,[T_{\alpha_2}^{n_2}(\delta_2)])$ where $n_1,n_2\in\Z$. Lemma \ref{lem:HHS-2.4} tells us that there is a cubical isometry from $F'$ to $F$. Combining the above facts implies that $F'$ must be a tree $t(\alpha_1,\alpha_2,[T_{\alpha_1}^{n_1}(\delta_1)],[T_{\alpha_2}^{n_2}(\delta_2)]).$ Thus, $F'\in\fs'$.
\end{proof}

We are now equipped to prove Proposition \ref{prop:contentsoffs}.

\begin{proof}[Proof of Proposition \ref{prop:contentsoffs}]
    First, recall via Lemma \ref{lem:characterizefs'} that $\fs'\subset \fs.$ Thus, it remains to show that $\fs\subset \fs'$. Throughout this proof, we will use the notation $\mathfrak{C}(\mathcal{M})$ to denote the set of combinatorial hyperplanes of $\mathcal{M}.$
    
    To show that $\fs \subset \fs'$, we will explicitly determine the convex subcomplexes contained in $\fs$, and will see that they are indeed elements of $\fs'$. To this end, we will use the characterization of the hyperclosure given by Lemma \ref{lem:Hagen-2.2}, ie $\fs = \cup_{n\geq 0}\fs_{n}$, where $\fs_0=\{\M\}$ and $\fs_n$ for $n\geq 1$ is the set of convex subcomplexes of $\M$ that can be written in the form $\gate{C}{F}$ for some $C\in \mathfrak{C}(\mathcal{M})$ and $F\in\fs_{n-1}$.
    
    We will start by showing that projections $\gate{C}{F}$, where $C$ and $F$ are not separated by any combinatorial switch hyperplanes, are all contained in $\fs'$. Then we will show that for any $n$, any projection of a subcomplex $F\in \fs_n$ onto a combinatorial hyperplane can be decomposed into projections between subcomplexes that are not separated by any combinatorial switch hyperplanes.
    
    \textbf{Case 1:}
    Let $\fs'_0=\fs_0$, and let $\fs'_n$ be the set of convex subcomplexes that can be written as $\gate{C}{F_{n-1}}$ such that $C\in\mathfrak{C}(\mathcal{M})$, $F_{n-1}\in\fs'_{n-1}$, and $C$ and $F_{n-1}$ are not separated by a switch hyperplane. We will show first that $\cup_{n\geq 0}\fs'_{n}\subset\fs'$. Specifically, we will show that the sets $\fs_k'$ stabilize after $k=3$.
    
    For $n=1$, we have $\fs_1 = \fs'_1 = \mathfrak{C}(\mathcal{M})$ since no combinatorial hyperplane is separated from $\M$ by any other combinatorial hyperplane.
    
    Suppose $F_2\in\fs'_2$, meaning that $F_2 = \gate{C}{F_1}$ for some $C\in \mathfrak{C}(\mathcal{M})$ and some $F_1\in\fs'_1 = \mathfrak{C}(\mathcal{M})$, where $F_1$ and $C$ are not separated by any combinatorial switch hyperplanes. Because $F_1$ and $C$ are not separated by a switch hyperplane, both $F_1$ and $C$ have non-empty intersection with some twist flat $\M(X)$.
    Futher, because $F_1$ and $C$ are combinatorial hyperplanes that intersect a common twist flat, one of the following must hold:
    \begin{enumerate}
        \item[(i)] $F_1$ and $C$ are parallel;
        \item[(ii)] one of $F_1,C$ is parallel into the other; or
        \item[(iii)] $F_1$ and $C$ have non-empty intersection.
    \end{enumerate}
    These are in fact the only cases because if $F_1$ and $C$ are disjoint, they must be parallel or one must be parallel into the other. This follows from a similar argument to cases (2) and (4) in Lemma \ref{lem:edgescontact}; to summarize, $F_1\cap \M(X)$ and $C\cap \M(X)$ are isometric to $\R^2$ and are parallel in $\M(X)$, and by the fact that hyperplane in $\M$ are determined by the intersection with a single $3$-cube in $\M$, $F_1$ and $C$ must be parallel, or one must be parallel into the other. Thus, $F_2$ is determined by one of the following cases.
    \begin{enumerate}
        \item[(a)] If $F_1$ and $C$ are parallel, then by Lemmas \ref{lem:HHS-2.4}, \ref{lem:twistcombparallel}, and \ref{lem:switchcombparallel}, $F_2 = \gate{C}{F_1}$ is a combinatorial hyperplane.
        
        \item[(b)] If $F_1$ is parallel into $C$, then $\gate{C}{F_1}$ is the subcomplex of $C$ that is parallel to $F_1$. Again by Lemmas \ref{lem:HHS-2.4}, \ref{lem:twistcombparallel}, and \ref{lem:switchcombparallel}, $F_2 = \gate{C}{F_1}$ is a combinatorial hyperplane.
        
        \item[(c)] If $C$ is parallel into $F_1$, we know that $\gate{F_1}{C}$ is the subcomplex of $F_1$ that is parallel to $C$. Then Lemma \ref{lem:HHS-2.6} implies $\gate{C}{F_1}$ is also parallel to $\gate{F_1}{C}$. Again by  Lemmas \ref{lem:HHS-2.4}, \ref{lem:twistcombparallel}, and \ref{lem:switchcombparallel}, $F_2 = \gate{C}{F_1}$ is a combinatorial hyperplane.
        
        \item[(d)] If $F_1$ and $C$ have non-empty intersection, then Lemma \ref{lem:HHS-2.6} tells us that $\gate{C}{F_1} = F_1\cap C$, which is the intersection of two combinatorial hyperplanes. Specifically, by Lemma \ref{lem:characterizefs'}, $F_2 = \gate{C}{F_1}$ is either a combinatorial switch hyperplane, a line $l(X,\delta_1,\delta_2)$, or a tree $t(\alpha_1,\alpha_2,[\delta_1],[\delta_2])$.
    \end{enumerate}
    In each of these cases, $F_2$ is a convex subcomplex found in $\fs'$. Thus, $\fs'_2 \subset \fs'.$
    
    Now suppose $F_3\in\fs'_3$, meaning that $F_3 = \gate{C}{F_2}$ for some $C\in \mathfrak{C}(\mathcal{M})$ and some $F_2\in\fs'_2$ where $F_2$ and $C$ are not separated by any combinatorial switch hyperplanes. Because $F_2\in\fs'_2,$ we know that $F_2$ is either a combinatorial hyperplane, a tree $t(\alpha_1,\alpha_2,[\delta_1],[\delta_2])$, or a line $l(X,\delta_1,\delta_2)$. This means $F_3$ is determined by one of the following cases.
    \begin{enumerate}
        \item[(a)] If $F_2$ is a combinatorial hyperplane, then $F_2\in\fs'_1$ and $F_3=\gate{C}{F_2}\in\fs'_2$. We already know that $F_3$ will be a combinatorial hyperplane, a tree, or a line.
        
        \item[(b)] Suppose $F_2=l(X,\delta_1,\delta_2)$. Since we have assumed that $C$ and $F_2$ are not separated by a switch hyperplane, they must both intersect the twist flat $\M(X)$. This means that either $F_2$ is parallel into $C$, or $F_2$ intersects $C$ in a single point. In the first case, $F_3 = \gate{C}{F_2}$ will be the line contained in $C$ that is parallel to $F_2$, and in the second case $F_3 = \gate{C}{F_2}$ is the single $0$-cube of intersection.
        
        \item[(c)] Suppose $F_2=t(\alpha_1,\alpha_2,[\delta_1],[\delta_2])$. Then the classification of $\gate{C}{F_2}$ depends on whether $C$ is a combinatorial switch or twist hyperplane.
        \begin{enumerate}
            \item[(i)] Suppose $C$ is a combinatorial switch hyperplane, and say it is contained in the twist flat $\M(X)$, where $\alpha_1,\alpha_2\in X$. Then $\gate{C}{F_2}\subset C\subset \M(X)$. The intersection of $F_2$ with $\M(X)$ is the line $l(X,\delta_1,\delta_2)$. If $C$ and $F_2$ have non-empty intersection, then $\gate{C}{F_2} =C\cap F_2$ will be a single $0$-cube or the line $l(X,\delta_1,\delta_2)$. If instead $C$ and $F_2$ are disjoint, then $\gate{C}{F_2}$ will be the line contained in $C$ that is parallel to $F_2$. This can be seen via Lemma \ref{lem:Hagen-1.5} and the fact that the only hyperplanes intersecting both $C$ and $F_2$  will be the collection of twist hyperplanes $H(\alpha_3,[\delta_3])$, where $\alpha_3\in X - \{\alpha_1,\alpha_2\}$.
            
            \item[(ii)] Suppose $C$ is a combinatorial twist hyperplane. To determine the projection $\gate{C}{F_2}$ we must understand how the two combinatorial twist hyperplanes $C(\alpha_1,[\delta_1])$ and $C(\alpha_2,[\delta_2])$ relate to $C$. Because $F_2$ and $C$ intersect a common twist flat, it must be that $C$ intersects one or both of $C(\alpha_1,[\delta_1])$ and $C(\alpha_2,[\delta_2])$. If $C$ intersects both $C(\alpha_1,[\delta_1])$ and $C(\alpha_2,[\delta_2])$, then either their intersection is a single $0$-cube, (meaning $\gate{C}{F_2}$ is that $0$-cube), or the intersection is the entire tree, meaning $\gate{C}{F_2}$ is the entire tree. If $C$ intersects only one of $C(\alpha_1,[\delta_1])$ and $C(\alpha_2,[\delta_2])$, say to $C(\alpha_1,[\delta_1])$, then $C$ must be parallel to $C(\alpha_1,[\delta_1])$, and $F_2$ must be parallel into $C$. This means that $\gate{C}{F_2}$ will be the tree contained in $C$ that is parallel to $F_2$.
        \end{enumerate}
    \end{enumerate}
    Thus, the convex subcomplexes contained in $\fs'_3$ are either combinatorial hyperplanes, trees, lines, or single $0$-cubes, meaning $\fs'_3\subset \fs'$. Additionally, we see that $\fs'_3 = \fs'_2 \cup \{0-\text{cubes}\}$. By this description, we can see that projections of elements of $\fs'_3$ to combinatorial hyperplanes are still elements of $\fs'_3$. This means that for $n\geq 3$, $\fs'_n = \fs'_3\subset \fs'$. Thus, $\cup_{n\geq 0} \fs'_n \subset \fs'.$
    
    \textbf{Case 2:} 
    For the general case, we will show that if $F\in\fs_n$, then $F\in\cup_{n\geq 0} \fs'_n\subset \fs'.$ We proceed by induction.
    
    Recall that $\fs_0 = \fs_0'$ and $\fs_1 = \fs_1'$. Clearly, for $n=0$ or $1$, $\fs_n \subset \fs'$.
    
    For the inductive step, suppose that for $k\geq 2$, if $F_{k-1}\in\fs_{k-1}$, then $F_{k-1}\in\cup_{n\geq 0} \fs'_n.$ Now suppose that $F_k\in\fs_k.$ This means $F_k=\gate{C}{F_{k-1}}$ for some combinatorial hyperplane $C$ and some $F_{k-1}\in\fs_{k-1}$. Suppose that $\{C_i\}_{i=1}^m$ are \emph{all} of the combinatorial switch hyperplanes separating $C$ from $F_{k-1}$, where the $C_i$ are ordered such that $C_1$ is closest to $F_{k-1}$ and $C_m$ is furthest from $F_{k-1}$. Consider the subcomplex $\gate{C}{\gate{C_m}{\cdots \gate{C_1}{F_{k-1}}\cdots}}$. By assumption, the subcomplexes $F_{k-1}$ and $C_1$ are not separated by any combinatorial switch hyperplanes. The inductive hypothesis then tells us that $\gate{C_1}{F_{k-1}}\in\cup_{n\geq 0} \fs'_n\subset \fs'$. Furthermore, because $C_i$ and $C_{i+1}$ are not separated by any combinatorial switch hyperplanes, $\gate{C}{\gate{C_m}{\cdots \gate{C_1}{F_{k-1}}\cdots}}\in \fs'_{k+m}.$ By Lemma \ref{lem:factorprojections}, we know that $\gate{C}{\gate{C_m}{\cdots \gate{C_1}{F_{k-1}}\cdots}}$ is parallel to $F_k = \gate{C}{F_{k-1}}$, so Lemma \ref{lem:closedparallel} implies $F_k\in \cup_{n\geq 0} \fs'_n\subset \fs'$. This proves that $\fs \subset \fs'.$
    
    Thus, we have shown that $\fs = \fs'.$
\end{proof}

\begin{cor}\label{cor:closureishyperclosure}
    Let $\mathfrak{M}$ be the closure of $\M$. Then $\mathfrak{M} = \mathfrak{F}.$
\end{cor}
\begin{proof}
    Clearly $\fs'\subset \mathfrak{M} \subset \fs$. Then Proposition \ref{prop:contentsoffs} implies $\fs'=\mathfrak{M}=\fs$.
\end{proof}

\subsection{\texorpdfstring{$\hh$}{H2} is an HHG with unbounded products}\label{subsec:proofoffsandup}

With the classification of the subcomplexes of $\fs$ in hand, we can now prove that $\hh$ is an HHG with unbounded products. We start by using our characterization of $\fs$ given by Proposition \ref{prop:contentsoffs} to prove that $\fs$ is a factor system.

\begin{lemma}\label{lem:factorsys}
$\fs$ is a factor system for $\M$.
\end{lemma}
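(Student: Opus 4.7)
The plan is to verify each of the four defining properties of Definition~\ref{def:factorsys} for $\fs$, making critical use of the explicit characterization $\fs = \fs'$ obtained in Proposition~\ref{prop:contentsoffs}. Three of these properties are essentially immediate. Property~(1) holds since $\M \in \fs$ by the first clause of Definition~\ref{def:hyperclosure}. Property~(4) holds with $\xi = 0$, since the third clause of Definition~\ref{def:hyperclosure} already closes $\fs$ under gate projections between any two of its elements. Property~(3), that every non-trivial subcomplex parallel to a combinatorial hyperplane lies in $\fs$, follows from Lemmas~\ref{lem:twistcombparallel} and~\ref{lem:switchcombparallel}: these show that any such parallel subcomplex is itself a combinatorial hyperplane, and all combinatorial hyperplanes lie in $\fs$ via the second clause of Definition~\ref{def:hyperclosure}.

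The main obstacle is therefore Property~(2), finite multiplicity: I need to produce a uniform bound $N$ on the number of elements of $\fs$ containing any given $0$-cube. My strategy is to fix an arbitrary $0$-cube $v = (X,\Delta)$ with $X = \{\alpha_1,\alpha_2,\alpha_3\}$ and $\Delta = \{\delta_1,\delta_2,\delta_3\}$, and to exhaustively enumerate all subcomplexes of $\fs$ passing through $v$ using Lemma~\ref{lem:characterizefs'}, which partitions $\fs$ into six types: the whole space, combinatorial switch hyperplanes, combinatorial twist hyperplanes, lines, trees, and $0$-cubes.

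The key observation is that $v$ uniquely determines both $X$ and $\Delta$, while the descriptions of each subcomplex type in Sections~\ref{subsec:hyperplanes} and~\ref{subsec:combhypint} specify exactly which pants curves and duals must appear at each vertex of the subcomplex. This forces the counts: three combinatorial switch hyperplanes of the form $C(X,\delta_i)$ pass through $v$, one per dual in $\Delta$; three combinatorial twist hyperplanes $C(\alpha_i,\delta_i)$ pass through $v$, one per pants curve in $X$ with its associated dual from $\Delta$; three lines $l(X,\delta_i,\delta_j)$ pass through $v$, indexed by unordered pairs of duals in $\Delta$; and three trees $t(\alpha_i,\alpha_j,\delta_i,\delta_j)$ pass through $v$, indexed by unordered pairs of pants curves in $X$ together with their associated duals. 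Adjoining $\M$ itself and the singleton $\{v\}$ yields the uniform bound $N = 1+3+3+3+3+1 = 14$, completing the verification.
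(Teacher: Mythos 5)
Your proof is correct and follows essentially the same route as the paper: both use the classification $\fs = \fs'$ from Proposition~\ref{prop:contentsoffs} to enumerate the subcomplexes through a fixed $0$-cube and obtain the finite multiplicity constant $N = 14$. The only cosmetic difference is that you spell out why properties~(1),~(3), and~(4) follow from the hyperclosure definition, whereas the paper dispatches them in a single sentence, so there is nothing substantive separating the two arguments.
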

\begin{proof}
    By the definition of the hyperclosure, all that must be proven is that property (2) of Definition \ref{def:factorsys} is satisfied, ie that $\fs$ has finite multiplicity. We will use the classification $\fs=\fs'$ afforded by Proposition \ref{prop:contentsoffs} to show that the finite multiplicity property holds with $N=14$.
    
    Let $(X,\Delta)$ be a vertex in $\M$, where $X = \{\alpha_1, \alpha_2, \alpha_3\}$ and $\Delta = \{\delta_1, \delta_2, \delta_3\}$. Of course $(X,\Delta)\in \M$ and $(X,\Delta)\in(X,\Delta)$. Additionally, $(X,\Delta)$ is in three combinatorial switch hyperplanes corresponding to each of the three dual curves: $C(X,\delta_1)$, $C(X,\delta_2)$, and $C(X,\delta_3)$. Similarly, $(X,\Delta)$ will be contained in three combinatorial twist hyperplanes corresponding to the three pants curves and their duals: $C(\alpha_1,[\delta_1])$, $C(\alpha_2,[\delta_2])$, and $C(\alpha_3,[\delta_3])$. The vertex is also contained in several lines and trees. Particularly, $(X,\Delta)$ is contained in the lines $l(X, \delta_1,\delta_2)$, $l(X,\delta_1,\delta_3)$, and $l(X,\delta_2,\delta_3)$, as well as the trees $t(\alpha_1,\alpha_2,[\delta_1],[\delta_2])$, $t(\alpha_1,\alpha_3,[\delta_1],[\delta_3])$, and $t(\alpha_2,\alpha_3,[\delta_2],[\delta_3])$. These are all the types of subcomplexes in $\fs$ that contain $(X,\Delta)$, for a total of $14$ subcomplexes. Thus, property (2) is satisfied, and we have shown that $\fs$ is a factor system for $\M$.
\end{proof}

The existence of a factor system for $\M$ leads to the following corollary.

\begin{cor}\label{cor:HHG}
$(\hh,\is)$ is a hierarchically hyperbolic group, where $\is$ is a subset of $\fs$ containing a single element from each parallelism class in $\fs$, (excluding single $0$-cubes), and our associated set of $\delta$-hyperbolic spaces is the set of factored contact graphs $\{ \fcontact{F} : F\in \is\}$.
\end{cor}
\begin{proof}
Via \cite[Remark 13.2]{behrstock2017} and Lemma \ref{lem:factorsys}, we can conclude that $\M$ is hierarchically hyperbolic, with domains $\is$ and $\delta$-hyperbolic spaces as described in the corollary. We exclude $0$-cubes so that nesting and orthogonality are mutually exclusive.

In \cite[Proposition 6.7]{hamHenDehn}, Hamenst\"adt and Hensel prove that $\hh$ acts properly, cocompactly, and by isometries on $\M$. Since $\M$ is an HHS, it follows that $\hh$ is an HHG with the same domains and associated $\delta$-hyperbolic spaces.
\end{proof}

Lastly, in order to be able to apply Theorem \ref{thm:abbott} to $(\hh,\is)$, we prove that $(\hh,\is)$ has unbounded products.

\begin{lemma}\label{lem:unboundedproducts}
$(\hh,\is)$ has unbounded products.
\end{lemma}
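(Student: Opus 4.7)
The plan is to invoke the classification $\fs=\fs'$ from Proposition \ref{prop:contentsoffs} and handle each type of domain case by case. The key geometric fact is already packaged in Lemma \ref{lem:HHS-2.4}: for any parallel copies $F,F'$, the geodesic $\{x\}\times [0,a]$ sits in the product $F\times E_F\hookrightarrow\M$ and crosses exactly the hyperplanes that separate $F$ from $F'$, so the distance in $E_F$ between the $0$-cubes representing $F$ and $F'$ equals the number of such separating hyperplanes. Hence to conclude $\operatorname{diam}(E_F)=\infty$ it suffices to exhibit, in each case, parallel copies of $F$ that are separated by arbitrarily many hyperplanes. Every $F\in\is-\{\M\}$ already has infinite diameter: combinatorial switch hyperplanes are isometric to $\R^2$, combinatorial twist hyperplanes stretch across the infinite subtree $\Pnm(\alpha)\subset\Pnm$, lines are copies of $\R$, and trees $t(\alpha_1,\alpha_2,\delta_1,\delta_2)$ are infinite by Lemma \ref{lem:commoncurvepath}.

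For a combinatorial twist hyperplane $F=C(\alpha_1,\delta_1)$, Corollary \ref{cor:twistsimplytrans} describes the parallelism class as $\{T_{\alpha_1}^k F\}_{k\in\Z}$, and for each $k>0$ the twist hyperplanes $H(\alpha_1, T_{\alpha_1}^j(\delta_1))$ with $1\leq j\leq k$ all separate $F$ from $T_{\alpha_1}^k F$, yielding distance at least $k$ in $E_F$. For a combinatorial switch hyperplane $F=C(X,\delta_1)$ with $X=\{\alpha_1,\alpha_2,\alpha_3\}$, Lemma \ref{lem:switchcombparallel} parametrizes the parallelism class by the edges of the infinite tree $\Pnm(\{\alpha_2,\alpha_3\})$; for a parallel copy $C(X^*,\delta_1^*)$ whose associated edge lies at tree-distance $k$ from the edge coming from $F$, the $k$ switch bridges along the path in $\Pnm(\{\alpha_2,\alpha_3\})$ contribute $k$ switch hyperplanes separating $F$ from $C(X^*,\delta_1^*)$.

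For a line $F=l(X,\delta_1,\delta_2)$, Case 3 of the proof of Lemma \ref{lem:closedparallel} puts $l(X, T_{\alpha_1}^k(\delta_1), \delta_2)$ in the parallelism class for every $k\in\Z$, and these are separated from $F$ by the $k$ twist hyperplanes $H(\alpha_1, T_{\alpha_1}^j(\delta_1))$ with $1\leq j\leq k$. For a tree $F=t(\alpha_1,\alpha_2,\delta_1,\delta_2)$, Case 4 identifies the parallelism class as $\{t(\alpha_1,\alpha_2,T_{\alpha_1}^{n_1}(\delta_1),T_{\alpha_2}^{n_2}(\delta_2)):n_1,n_2\in\Z\}$; fixing $n_2=0$ and letting $n_1\to\infty$ produces arbitrarily many separating twist hyperplanes $H(\alpha_1, T_{\alpha_1}^j(\delta_1))$. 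The only real subtlety is checking in each case that the listed hyperplanes genuinely \emph{separate} the two parallel copies, rather than being crossed by one or disjoint from both, but this is immediate from the explicit descriptions in Sections \ref{subsec:hyperplanes} and \ref{subsec:combhypint} of which hyperplanes cross (versus avoid) each type of subcomplex.
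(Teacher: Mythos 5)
Your proof is correct, but it takes a more computational route than the paper's. Both arguments hinge on the structure of parallelism classes, but the paper's proof has two shorter steps. For a combinatorial hyperplane $F$, the paper appeals directly to Corollaries \ref{cor:twistsimplytrans} and \ref{cor:switchsimplytrans}: the parallelism class $[F]$ is infinite, and each element of $[F]$ meets $E_F$ in a distinct $0$-cube (via Lemma \ref{lem:HHS-2.4}), so $E_F$ is infinite. For every other $F\in\is-\{\M\}$, the paper observes from Proposition \ref{prop:contentsoffs} that $F$ sits inside some combinatorial hyperplane $C$, and then shows $E_C\subset E_F$, which transfers the infinite diameter from $E_C$ to $E_F$ without any further case work. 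You instead run a full case analysis over twist hyperplanes, switch hyperplanes, lines, and trees, and for each you explicitly produce a family of parallel copies together with the hyperplanes separating them, so that $\operatorname{diam}(E_F)=\infty$ follows from Lemma \ref{lem:HHS-2.4}. What your approach buys is explicit lower bounds on the distance in $E_F$ between parallel copies; what it costs is that you must re-verify, in each of the line and tree cases, which hyperplanes cross versus separate the given copies — information the paper's nesting argument bypasses entirely.

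One small correction, which does not affect the argument: the separating twist hyperplanes between $C(\alpha_1,\delta_1)$ and $T_{\alpha_1}^k C(\alpha_1,\delta_1)=C(\alpha_1,T_{\alpha_1}^k(\delta_1))$ are $H(\alpha_1,T_{\alpha_1}^j(\delta_1))$ for $0\leq j\leq k-1$, not $1\leq j\leq k$, since the combinatorial hyperplanes of $H(\alpha_1,T_{\alpha_1}^j(\delta_1))$ are $C(\alpha_1,T_{\alpha_1}^j(\delta_1))$ and $C(\alpha_1,T_{\alpha_1}^{j+1}(\delta_1))$. The same indexing shift occurs in your line and tree cases. The count is still $k$, so the conclusion $\operatorname{diam}(E_F)=\infty$ is unaffected.
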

\begin{proof}
Every subcomplex $F\in \is-\{\M\}$ has infinite diameter, so we must show that the corresponding factors $E_F$, as defined in Lemma \ref{lem:HHS-2.4}, also have infinite diameter.

If $F$ is a combinatorial hyperplane, then by Corollaries \ref{cor:twistsimplytrans} and \ref{cor:switchinfcard}, there are infinitely many elements in the parallelism class of $F$. Each unique subcomplex in $[F]$ intersects $E_F$ in a unique $0$-cube, (by Lemma \ref{lem:HHS-2.4}), and hence, $E_F$ must have infinite diameter.

Suppose instead that $F\in\is-\{\M\}$ is arbitrary. By the characterization of $\fs\subset\is$ given in Proposition \ref{prop:contentsoffs}, we know that $F$ is contained in some combinatorial hyperplane $C$. Consider the cubical isometric embedding $C\times E_C \to \mathcal{X}$ given by Lemma \ref{lem:HHS-2.4}. If $F'\in[F]$ such that there is some $C'\in[C]$ with $F'\subset C'$, then there is a $0$-cube $e\in E_C$ such that $F\times \{e\}\to\mathcal{X}$ factors as $F\times \{e\}\xrightarrow{id}F'\hookrightarrow F.$ Thus, $E_C\subset E_F$. Since $\operatorname{diam}(E_C) =\infty,$ it follows that  $\operatorname{diam}(E_F) = \infty.$

Thus, $(\M,\is)$ has unbounded products.
\end{proof}

\section{The Factored Contact Graph is Quasi-isometric to the disk graph}\label{sec:contactdisk}

The last piece necessary to prove the main theorems is to prove that the disk graph $\mathcal{D}(V_2)$ is coarsely $\hh$-equivariantly quasi-isometric to the factored contact graph $\fcontact{\M}$. In this section we prove this claim, and then finally prove the main theorems.

Let $\mathcal{ND}(V_g)$ be the \emph{non-separating disk graph}, ie the induced subgraph of $\mathcal{D}(V_g)$ whose vertices correspond to the non-separating meridians on $\bd V_g$. Since the vertices in the model $\M$ include only non-separating meridians, it will be easier to work with $\mathcal{ND}(V_2)$ rather than $\mathcal{D}(V_2)$ when constructing a quasi-isometry to $\fcontact{\M}$. The following proposition allows us to make this simplification.

\begin{prop}\label{prop:disk-nonsepdisk}
For $g\geq 2$, the non-separating disk graph $\mathcal{ND}(V_g)$ isometrically embeds as a $\frac{3}{2}$-dense subgraph of the disk graph $\mathcal{D}(V_g)$.
\end{prop}

The following argument is analogous to the argument that the non-separating curve graph is quasi-isometric to the curve graph.

\begin{proof}[Proof of Proposition \ref{prop:disk-nonsepdisk}]
Because $\mathcal{ND}(V_g)$ is a subgraph of $\mathcal{D}(V_g)$, the inclusion is $1$-Lipschitz.

Suppose now that $\gamma:[0,n]\to \mathcal{D}(V_g)$ is a geodesic in the the disk graph such that $\gamma(0)$ and $\gamma(n)$ are non-separating meridians. Suppose for some $0<i<n$ that $\gamma(i)$ is a separating meridian. The complement $V_g - \gamma(i)$ consists of two spotted handlebodies $Y_1$ and $Y_2$, each with genus at least $1$. The meridians $\gamma(i-1)$ and $\gamma(i+1)$ must intersect since they are distance two apart, but both must be disjoint from $\gamma(i)$. This means that $\gamma(i-1) \cup \gamma(i+1)$ is contained in say $Y_1$. Since $Y_2$ is a spotted handlebody of genus at least one, it must contain at least one non-separating meridian $\delta$. The meridian $\delta$ is disjoint from $\gamma(i-1) \cup \gamma(i+1)$, so we can replace $\gamma(i)$ with $\delta$. In this way, we can replace each separating meridian in $\gamma$ with a non-separating meridian, and thus the distance between $\gamma(0)$ and $\gamma(n)$ in the non-separating disk graph is at most the distance between them in the disk graph. This gives us the lower bound.

Lastly, by the above we see that any separating meridian is always disjoint from at least one non-separating meridian, so every vertex of $\mathcal{D}(V_g)$ is distance $1$ from a vertex in $\mathcal{ND}(V_g)$. Then any point on any edge in $\mathcal{D}(V_g)$ is at most distance $\frac{3}{2}$ from some point in $\mathcal{ND}(V_g)$. Thus, $\mathcal{ND}(V_g)$ is a $\frac{3}{2}$-dense subgraph of $\mathcal{D}(V_g)$.
\end{proof}

\begin{prop}\label{prop:nonsepdisk-contact}
The non-separating disk graph $\mathcal{ND}(V_2)$ isometrically embeds as a $\frac{3}{2}$-dense subgraph of the contact graph $\contact{\M}$.
\end{prop}
\begin{proof}
We define first a map $\iota:\mathcal{ND}(V_2)^{(0)}\to \contact{\M}^{(0)}$ as $\iota(\alpha) = H(\alpha,[\delta])$, where $H(\alpha,[\delta])$ is any twist hyperplane such that every vertex in $N(H(\alpha,[\delta]))$ contains $\alpha$ as a pants curve. This map is injective because given any two non-separating meridians $\alpha$ and $\beta$, if $H(\alpha,[\delta]) = H(\beta,[\delta'])$, then indeed $\alpha = \beta$.

Next we assume $\alpha$ and $\beta$ are two distinct, non-separating meridians connected by an edge in $\mathcal{ND}(V_2)$, and show that $\iota(\alpha)$ and $\iota(\beta)$ will also be connected by an edge. We know that $\iota(\alpha)=H(\alpha,[\delta])$ and $\iota(\beta) = H(\beta,[\delta'])$ for some $\delta$ and $\delta'$ that are duals to $\alpha$ and $\alpha'$. By Lemma \ref{lem:disjoint-implies-cross}, $\iota(\alpha)$ and $\iota(\alpha')$ must cross, so they are connected by an edge in $\contact{\M}$.

Because $\iota$ is injective and sends disjoint meridians to hyperplanes that cross one another, (ie sends edges to edges), it follows that the map $\iota$ extends to a simplicial embedding $\iota: \mathcal{ND}(V_2)\to \contact{\M}$, which is thus $1$-Lipschitz.

Suppose now that $\gamma:[0,n]\to \contact{\M}$ is a geodesic parametrized by arc length such that $\gamma(0)$ and $\gamma(n)$ are in the image of $\iota$. We will show that we can use $\gamma$ to produce a new geodesic consisting entirely of twist hyperplanes in the image of $\iota$. The first step is to show that starting from one end of $\gamma$, we can replace any switch hyperplane in $\gamma$ with a twist hyperplane. Then we must show that we can choose the twist hyperplanes to be in the image of $\iota$. If $n=0$ or $1$, then we are already done, so assume $n\geq 2$. 

Fix $i$ such that $0< i < n$. Suppose $\gamma(i)$ corresponds to a switch hyperplane and $\gamma(i-1)$ corresponds to a twist hyperplane. Then either
\begin{enumerate}
    \item $\gamma(i+1)$ is a twist hyperplane, or
    \item $\gamma(i+1)$ is a switch hyperplane.
\end{enumerate}
In either case, both $N(\gamma(i-1))$ and $N(\gamma(i+1))$ must have non-empty intersection with $N(\gamma(i))$, but must be disjoint from one another. Recall also that each edge in $\contact{\M}$ corresponds to the two hyperplanes either crossing or osculating.

For case (1), suppose $\gamma(i+1)$ is a twist hyperplane. It cannot be the case that one of $\gamma(i-1)$ or $\gamma(i+1)$ osculates with $\gamma(i)$ and the other crosses it because then $\gamma(i-1)$ and $\gamma(i+1)$ would intersect one another (see Figure \ref{figure:case1-caveat}). More specifically, if $\gamma(i-1)$ osculates with $\gamma(i)$ and $\gamma(i+1)$ crosses $\gamma(i)$, then by Lemma \ref{lem:edgescontact} (4), $\gamma(i)$ will be parallel into $\gamma(i-1)$, and by the definition of parallel into, $\gamma(i+1)$ must cross $\gamma(i-1)$.

\begin{figure}[htb]
\centering
\labellist \small\hair 2pt
 \pinlabel {$\gamma(i)$} [ ] at 500 720
 \pinlabel {$\gamma(i-1)$} [ ] at 835 720
 \pinlabel {$\gamma(i+1)$} [ ] at 835 604
\endlabellist
\begin{tikzpicture}
\draw (0, 0) node[inner sep=0]
{\includegraphics[scale=.24]{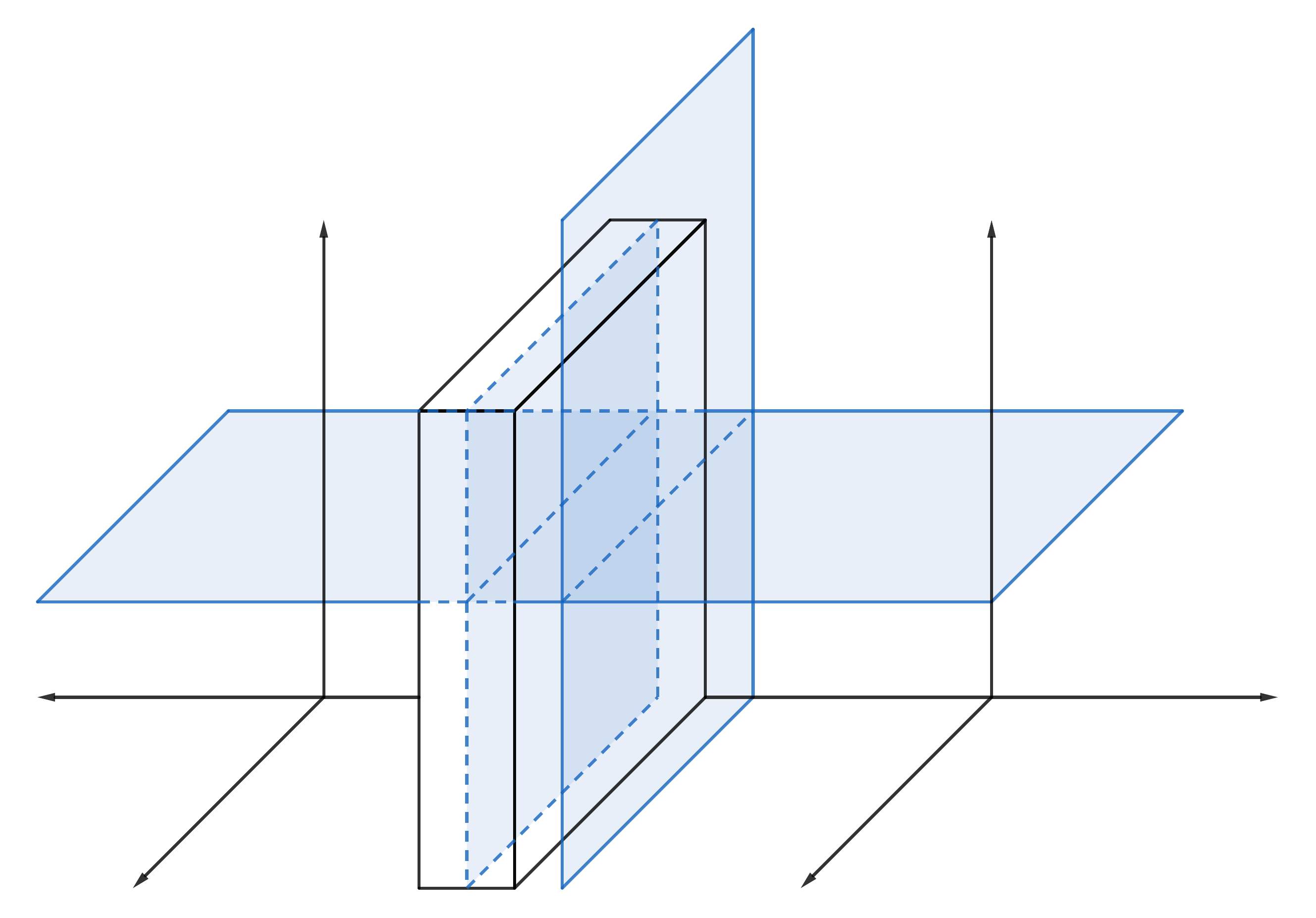}};
\draw [->] (-1.2,2.1) to  (-1.2,.5);
\draw [->] (1.1,2.5) to [out=150, in=30] (.3,2.5);
\draw [->] (1.6,1.2) to  (1.6,.1);
\end{tikzpicture}
\caption{Ruling out a subcase of case (1).}
\label{figure:case1-caveat}
\end{figure}

This means we have only two subcases:
\begin{enumerate}
    \item[(1a)] both $\gamma(i-1)$ and $\gamma(i+1)$ osculate with $\gamma(i)$, or
    
    \item[(1b)] both $\gamma(i-1)$ and $\gamma(i+1)$ cross $\gamma(i)$. Notice that since $N(\gamma(i+1))$ and $N(\gamma(i-1))$ must be disjoint, $\gamma(i-1)$ and $\gamma(i+1)$ must actually be parallel to one another.
\end{enumerate}
These two subcases are illustrated in Figure \ref{figure:case1ab}.
In case (1a), we can replace $\gamma(i)$ with any twist hyperplane that crosses $\gamma(i)$, as this hyperplane must also cross $\gamma(i-1)$ and $\gamma(i+1)$. This follows from the fact that $\gamma(i)$ must be parallel into both $\gamma(i-1)$ and $\gamma(i+1)$, (by Lemma \ref{lem:edgescontact} (4)). In case (1b), we can replace $\gamma(i)$ with any twist hyperplane that osculates with $\gamma(i)$, as this hyperplane must cross $\gamma(i-1)$ and $\gamma(i+1)$. Again, this follows from the fact that $\gamma(i)$ is parallel into any twist hyperplane with which it osculates (Lemma \ref{lem:edgescontact} (4)).

\begin{figure}[htb]
\centering
\begin{subfigure}{\textwidth}
    \centering
    \labellist \small\hair 2pt
     \pinlabel {$\gamma(i)$} [ ] at 750 380
     \pinlabel {$\gamma(i+1)$} [ ] at 750 530
     \pinlabel {$\gamma(i-1)$} [ ] at 150 455
    \endlabellist
    
    \begin{tikzpicture}[scale=.80]
    \draw (0, 0) node[inner sep=0]
    {\includegraphics[scale=.232]{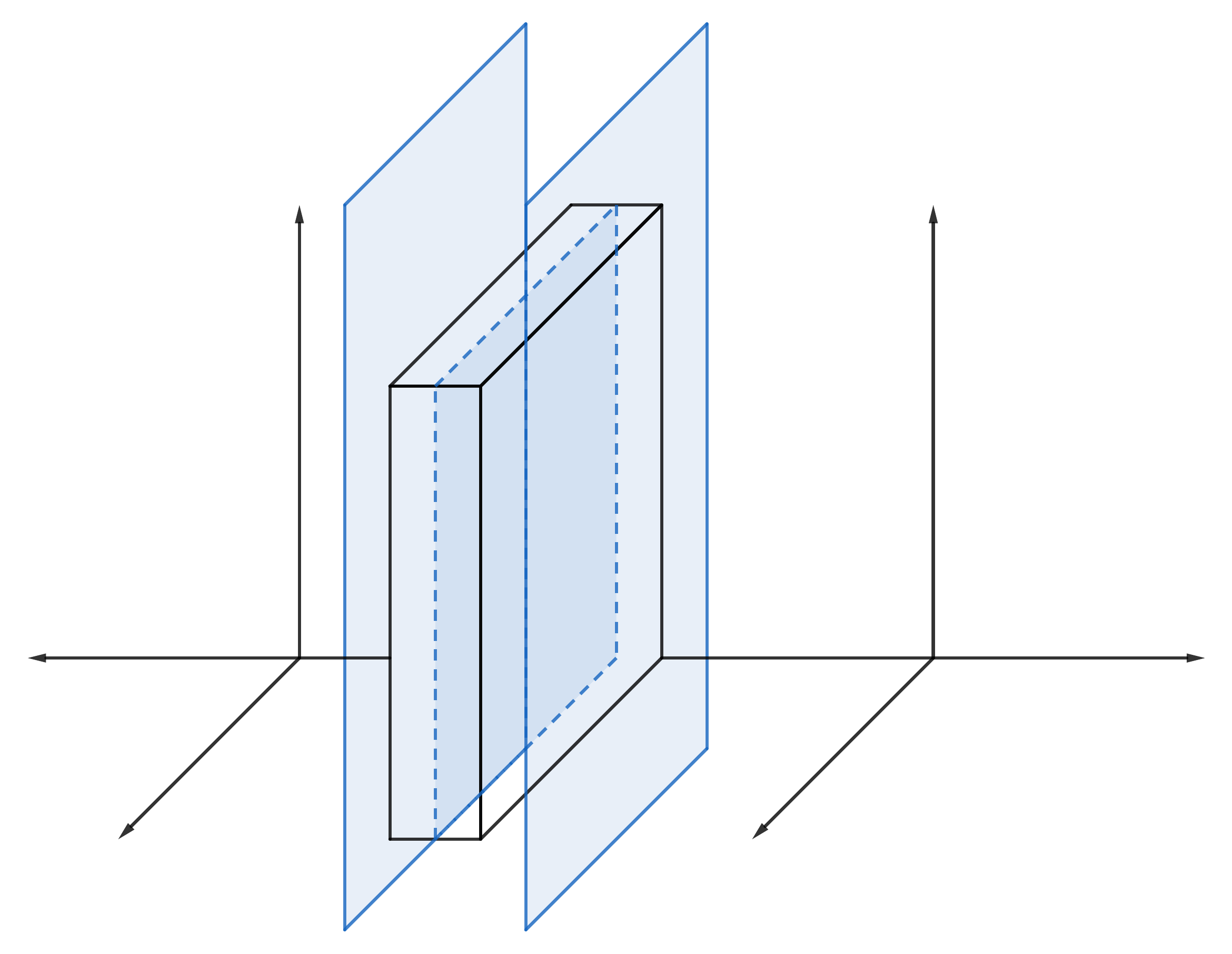}};
    \draw [->] (-3.5,.15) to (-2.3,.15);
    \draw [->] (1.1,1.1) to [out=150, in=30] (.25,1.1);
    \draw [->] (1.4,-.7) to [out=-150, in=-30] (-1.2,-.7);
    \end{tikzpicture}
\end{subfigure}
\begin{subfigure}{\textwidth}
\centering
\labellist
\small\hair 2pt
 \pinlabel {$\gamma(i)$} [ ] at 1000 780
 \pinlabel {$\gamma(i-1)$} [ ] at 1355 780
 \pinlabel {$\gamma(i+1)$} [ ] at 1000 200
\endlabellist
\begin{tikzpicture}
\draw (0, 0) node[inner sep=0]
{\includegraphics[scale=.18]{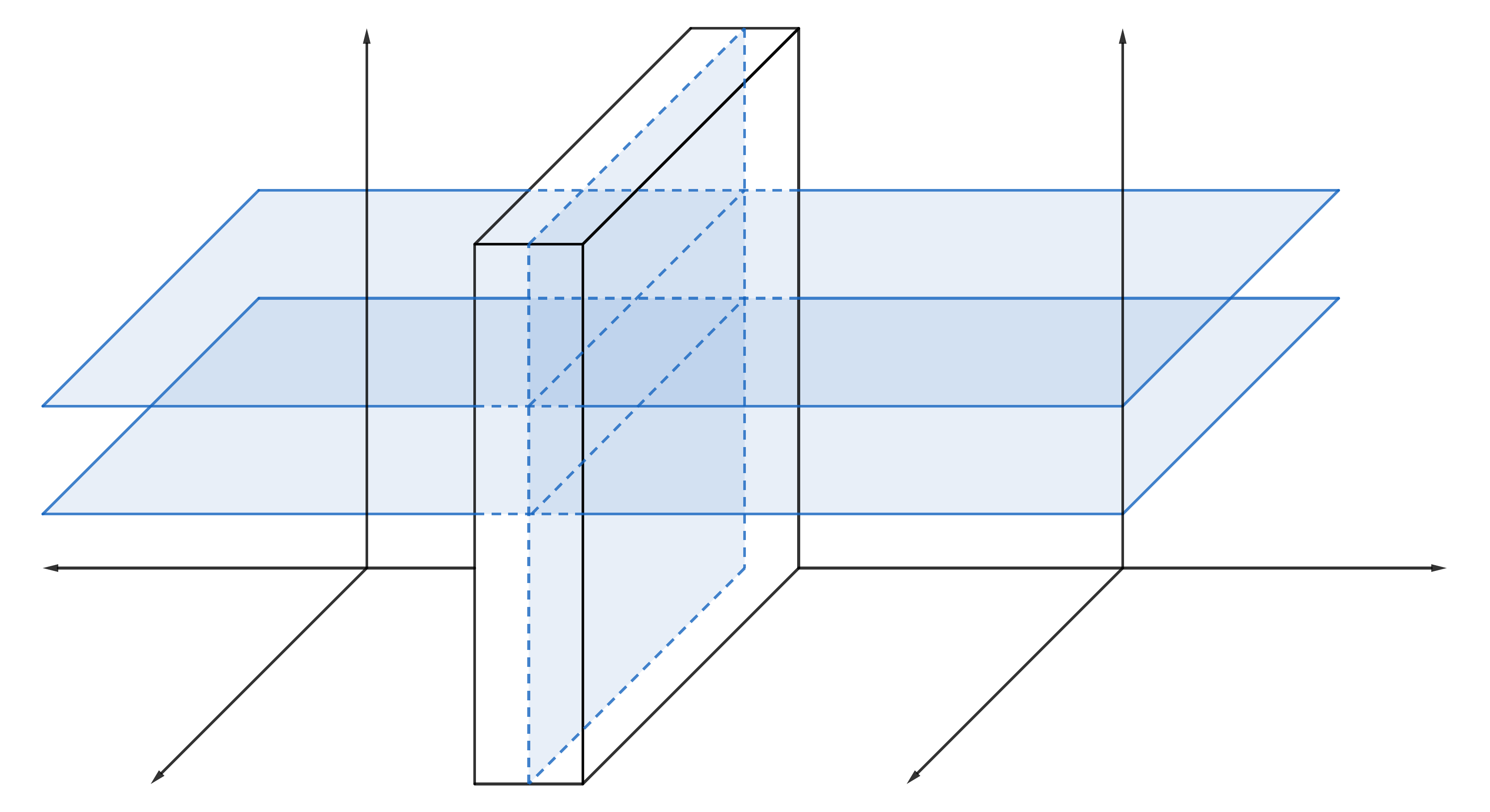}};
\draw [->] (3.2,1.9) to  (3.2,1.1);
\draw [->] (.7,1.9) to  (-0.2,1.6);
\draw [->] (1.1,-1.3) to (1.1,-.4);
\end{tikzpicture}
\end{subfigure}

\caption{Cases (1a) and (1b), respectively.}
\label{figure:case1ab}
\end{figure}

In case (2), where $\gamma(i+1)$ is a switch hyperplane, $\gamma(i)$ and $\gamma(i+1)$ must osculate because no two switch hyperplanes can cross, (again by Lemma \ref{lem:edgescontact}). This presents us with two subcases:
\begin{enumerate}
    \item[(2a)] $\gamma(i-1)$ osculates with $\gamma(i)$, or
    
    \item[(2b)] $\gamma(i-1)$ crosses $\gamma(i)$.
\end{enumerate}
Figure \ref{figure:case2ab} illustrates these two cases.
 In case (2a), we can replace $\gamma(i)$ with a twist hyperplane that crosses both $\gamma(i)$ and $\gamma(i+1)$, as this hyperplane must also cross $\gamma(i-1)$. Again we are using the fact that a switch hyperplane is parallel into any twist hyperplane with which it osculates (Lemma \ref{lem:edgescontact} (4)). In case (2b), we can also replace $\gamma(i)$ with a twist hyperplane that crosses both $\gamma(i)$ and $\gamma(i+1)$ as such a hyperplane must also cross $\gamma(i-1)$. This is because $\gamma(i+1)$ must be parallel into $\gamma(i-1)$.
 
\begin{figure}[htb]
\centering
\begin{subfigure}{\textwidth}
    \centering
    \labellist
    \small\hair 2pt
     \pinlabel {$\gamma(i-1)$} [ ] at 700 785
     \pinlabel {$\gamma(i)$} [ ] at 720 590
     \pinlabel {$\gamma(i+1)$} [ ] at 720 395
    \endlabellist
    \begin{tikzpicture}
    \draw (0, 0) node[inner sep=0]
    {\includegraphics[scale=.23]{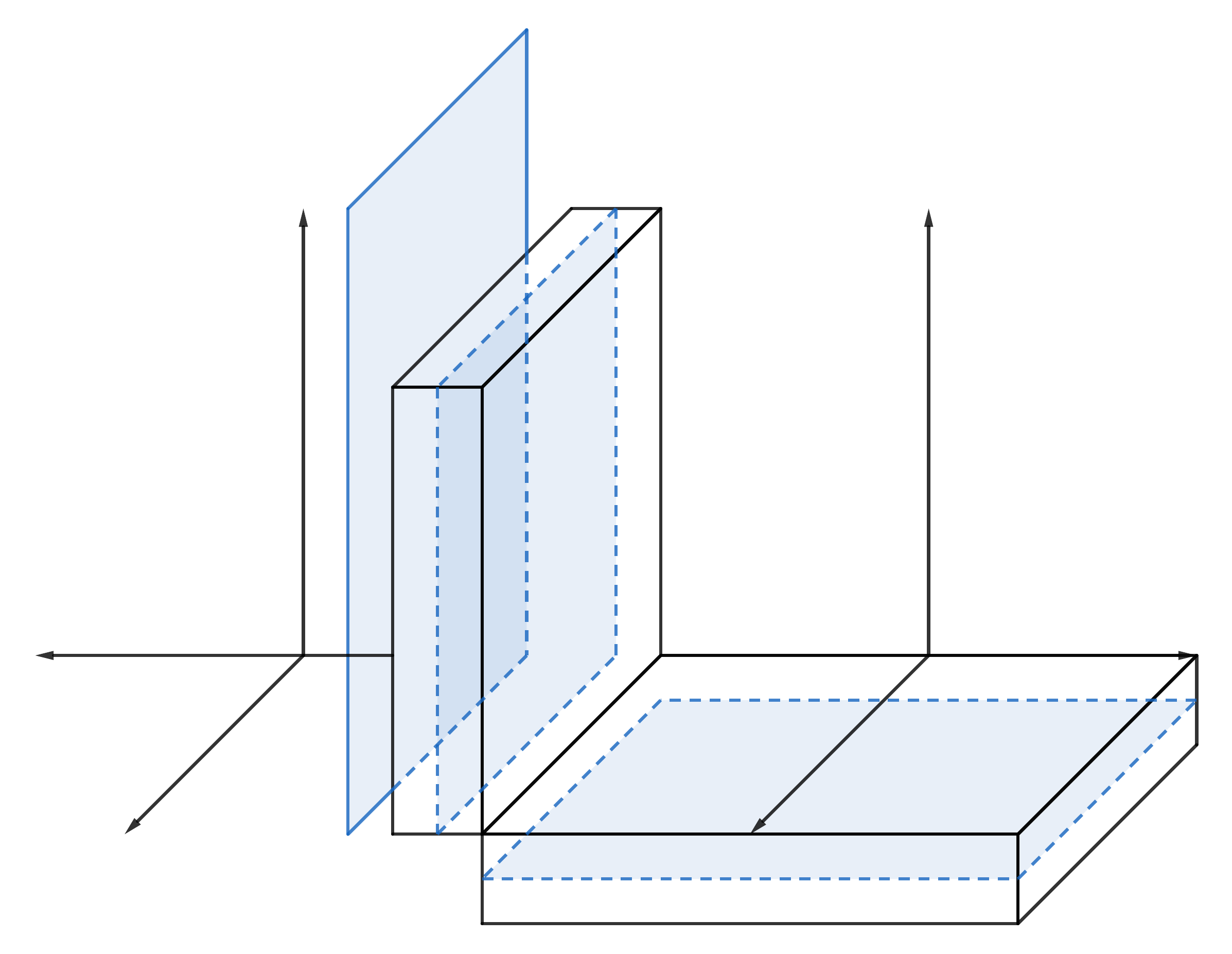}};
    \draw [->] (1.2,-.6) to  (1.2,-1.8);
    \draw [->] (.9,.9) to [out=-150, in=-30] (-.2,.9);
    \draw [->] (.3,2.63) to (-.8,2.63);
    \end{tikzpicture}
\end{subfigure}
\begin{subfigure}{\textwidth}
    \centering
    \labellist
    \small\hair 2pt
     \pinlabel {$\gamma(i-1)$} [ ] at 179 675
     \pinlabel {$\gamma(i)$} [ ] at 893 715
     \pinlabel {$\gamma(i+1)$} [ ] at 1213 415
    \endlabellist
    \begin{tikzpicture}
    \draw (0, 0) node[inner sep=0]
    {\includegraphics[scale=.18]{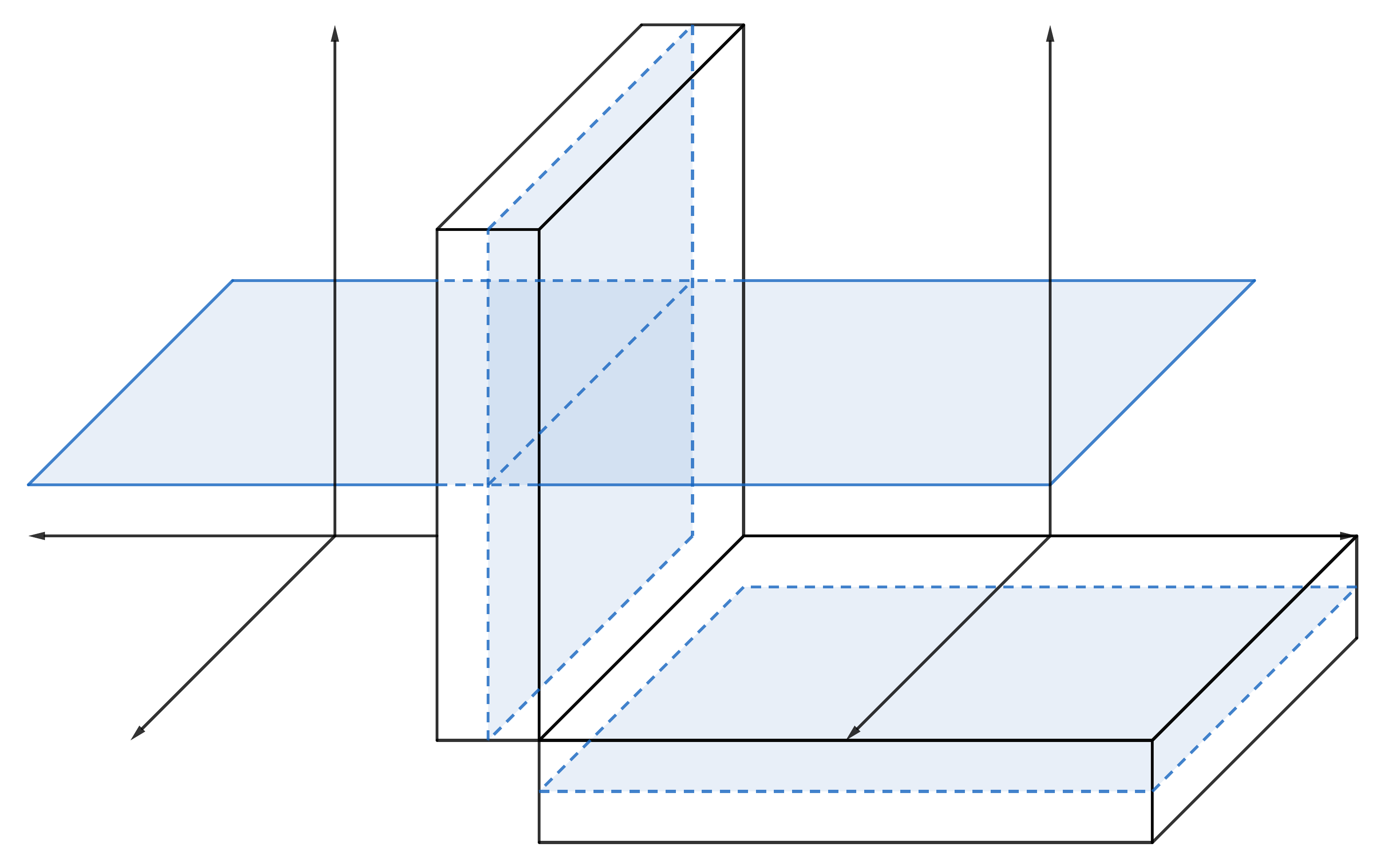}};
    \draw [->] (.9,1.4) to [out=-150, in=-30] (-.2,1.4);
    \draw [->] (-3,1.2) to (-2.5,.5);
    \draw [->] (3.1,-.4) to (3.1,-1.1);
    \end{tikzpicture}
\end{subfigure}

\caption{Cases (2a) and (2b), respectively.}
\label{figure:case2ab}
\end{figure}

By starting with one end of $\gamma$, say $\gamma(0)$, we can use the above arguments to replace any vertices in $\gamma$ corresponding to switch hyperplanes with vertices corresponding to twist hyperplanes. Let $\gamma'$ be the altered geodesic. Next we show that we can $\gamma'$ so that every vertex corresponds to a twist hyperplane in the image of $\iota$.

Suppose $\gamma'(i)=H(\alpha,[\delta])$ is a twist hyperplane that is not in the image of $\iota$. If $\gamma'(i-1)$ and $\gamma'(i+1)$ both cross $\gamma'(i)$, then because $\iota(\alpha)$ is parallel to $\gamma(i)$, (by Lemma \ref{lem:twistcombparallel}), it must also cross $\gamma'(i-1)$ and $\gamma'(i+1)$; we can replace $\gamma'(i)$ with $\iota(\alpha)$. If instead $\gamma'(i-1)$ and $\gamma'(i+1)$ both osculate with $\gamma'(i)$, then by Lemma \ref{lem:edgescontact} (2), all three hyperplanes are parallel, and there must be some twist hyperplane $H(\beta,[\eta])$ that crosses all three of them; we can replace $\gamma'(i)$ with $\iota(\beta)$. Lastly, it cannot be the case that $\gamma'(i-1)$ osculates with $\gamma'(i)$ and that $\gamma'(i+1)$ crosses $\gamma'(i)$ because $\gamma'(i+1)$ would then also cross $\gamma'(i-1)$.

We can thus replace $\gamma'$ with a geodesic consisting entirely of twist hyperplanes in the image of $\iota$. This means that the length of a geodesic in $\mathcal{ND}(V_2)$ connecting $\gamma(0)$ and $\gamma(n)$ is at most as long as $\gamma'$, (and so at most as long as $\gamma$), and hence we attain our lower bound.

It remains to show that $\iota$ is $\frac{3}{2}$-dense. Let $H(\alpha,[\delta])$ be a twist hyperplane not in the image of $\iota$. Let $H(\alpha',[\delta'])$ be any twist hyperplane that crosses $H(\alpha,[\delta])$. By Lemma \ref{lem:twistcombparallel}, $\iota(\alpha')$ is parallel to $H(\alpha',[\delta'])$, so because $H(\alpha,[\delta])$ crosses $H(\alpha',[\delta'])$, it must also cross $\iota(\alpha')$. Thus, $H(\alpha,[\delta])$ is a distance $1$ away from $\iota(\alpha')$, and hence a distance $1$ from the image of $\iota$.

Consider now a switch hyperplane $H(X,X')$. This hyperplane crosses all twist hyperplanes that have non-empty intersection with $\M(X)$ and $\M(X')$, and at least one of these twist hyperplanes will be in the image of $\iota$. Hence, $H(X,X')$ will also be a distance $1$ from the image of $\iota$. Thus, any vertex in $\contact{\M}$ will be a distance $1$ from some point in $\iota(\mathcal{ND}(V_2))$, and any point on an edge in $\contact{\M}$ will be at most distance $\frac{3}{2}$ from a point in $\iota(\mathcal{ND}(V_2))$. Thus, $\mathcal{ND}(V_2)$ is a $\frac{3}{2}$-dense subgraph of $\contact{\M}$.
\end{proof}

Propositions \ref{prop:disk-nonsepdisk} and \ref{prop:nonsepdisk-contact} gives us the following.

\begin{cor}\label{cor:contact-disk}
The factored contact graph $\fcontact{\M}$ is coarsely $\hh$-equivariantly quasi-isometric to the disk graph $\mathcal{D}(V_2)$.
\end{cor}
\begin{proof}
Recall from Corollary \ref{cor:closureishyperclosure} that our factor system for $\M$ is exactly the minimal factor system as described in \cite{behrstock2017}. \cite[Remark 8.18]{behrstock2017} tells us that for minimal factor systems, the factored contact graph $\fcontact{\M}$ is quasi-isometric to the contact graph $\contact{\M}$ via the inclusion. Proposition \ref{prop:disk-nonsepdisk} tells us that the inclusion $\mathcal{ND}(V_2) \hookrightarrow \mathcal{D}(V_2)$ is a quasi-isometry. By constructing a quasi-inverse $r$ for this inclusion, and composing this $r$ with $\iota$ from Proposition \ref{prop:nonsepdisk-contact} and the inclusion $\contact{\M}\hookrightarrow \fcontact{\M}$, we have a quasi-isometry $\phi: \mathcal{D}(V_2)\to\fcontact{\M}$.

To see that $\phi$ is coarsely $\hh$-equivariant, first note that clearly the inclusions $\mathcal{ND}(V_2)\hookrightarrow \mathcal{D}(V_2)$ and $\contact{\M}\hookrightarrow\fcontact{\M}$ are $\hh$-equivariant, and so the quasi-inverse $r$ will be coarsely $\hh$-equivariant. Furthermore, $\iota$ is coarsely $\hh$-equivariant because for any $\alpha\in\mathcal{ND}(V_2)^{(0)}$ and $g\in\hh$, the twist hyperplanes $g\cdot\iota(\alpha)$ and $\iota(g\cdot\alpha)$ will be parallel (by Lemma \ref{lem:twistcombparallel}), meaning their distance in $\contact{\M}$ will be at most two.
\end{proof}

It is now straightforward to prove Theorems \ref{thm:hhg-diskgraph} and \ref{thm:stable-orbit}.

\begin{proof}[Proof of Theorem \ref{thm:hhg-diskgraph}]
Corollary \ref{cor:HHG} tells us that $(\hh,\is)$ is an HHG with maximal $\delta$-hyperbolic space $\fcontact{\M}$. Corollary \ref{cor:contact-disk} tells us that $\fcontact{\M}$ is coarsely equivariantly quasi-isometric to $\mathcal{D}(V_2)$.
\end{proof}

\begin{proof}[Proof of Theorem \ref{thm:stable-orbit}.]
Theorem \ref{thm:hhg-diskgraph} tells us that $(\hh,\is)$ is an HHG with maximal $\delta$-hyperbolic space coarsely $\hh$-equivariantly quasi-isometric to $\mathcal{D}(V_2)$. Lemma \ref{lem:unboundedproducts} tells us that $(
\hh,\is)$ has unbounded products, which allows us to apply Theorem \ref{thm:abbott}. Because $\mathcal{D}(V_2)$ is coarsely $\hh$-equivariantly quasi-isometric to $\fcontact{\M}$, (the maximal $\delta$-hyperbolic space in our HHS), then if the orbit map of a subgroup into $\fcontact{\M}$ is a quasi-isometric embedding, so too is the orbit map into $\mathcal{D}(V_2)$.
\end{proof}

\section{Applications}\label{sec:applications}

In this section we discuss several applications of the main theorems. Specifically, we discuss some properties of the disk graph and the stable subgroups of $\hh$ that follow from the fact that factored contact graphs are quasi-trees \cite[Proposition 8.5]{behrstock2017}. We then provide a topological characterization of the Morse boundary of $\hh$.

\subsection{Quasi-trees}\label{subsec:quasi-trees}

First, since the factored contact graph $\fcontact{\M}$ is a quasi-tree, Corollary \ref{cor:contact-disk} immediately implies the following.
\begin{cor}\label{cor:disk-quasitree}
    The disk graph of genus two $\mathcal{D}(V_2)$ is a quasi-tree.
\end{cor}

Theorem \ref{thm:stable-orbit} and Corollary \ref{cor:disk-quasitree} then imply the following.
\begin{cor}
    Stable subgroups of $\hh$ are virtually free. In particular, any stable subgroup $H$ of the mapping class group $MCG(\bd V_2)$ that is also a subgroup of $\hh$ must be virtually free.
\end{cor}
\begin{proof}
    Because any stable subgroup $G\leq \hh$ quasi-isometrically embeds in $\mathcal{D}(V_2)$ and $\mathcal{D}(V_2)$ is quasi-isometric to a tree, it follows that $G$ quasi-isometrically embeds in a tree. Thus, $G$ is virtually free.
    
    By \cite[Theorem 1.6]{Aougab_2017}, if $G\leq \hh \leq MCG(\bd V_2)$ is stable in $MCG(\bd V_2)$, then it is stable in $\hh$. By the above, $G$ must be virtually free.
\end{proof}

\subsection{Morse boundary}\label{subsec:morse}

In this section, we provide a topological characterization of the Morse boundary of $\hh$. First, we give a brief description of some basic definitions relating to the Morse boundary, which was introduced in \cite{MorseBddy} as a generalization of the Gromov boundary of hyperbolic spaces. For this section, let $X$ be a proper geodesic metric space.

To work towards defining the Morse boundary, let us start by defining Morse geodesics. We say that a geodesic $\gamma$ in $X$ is \emph{Morse} if there is some function $N:\R^{+} \times \R^{+} \to \R^{+}$, which we call a \emph{Morse gauge} for $\gamma$, such that any $(K,C)$-quasi-geodesic in $X$ with endpoints on $\gamma$ remains in a $N(K,C)$-neighborhood of $\gamma$.

Consider two Morse rays $\alpha$ and $\beta$. We say that these rays are equivalent, and write $\alpha\sim \beta$, if there is some constant $D$ such that $d(\alpha(t),\beta(t)) < D$ for all $t$. We denote the equivalence class of a ray by $[\alpha]$. We can now define for a Morse gauge $N$ and a basepoint $e\in X$ the set
\[
    \bdM^N X_e = \{ [\alpha] : \exists \beta\in[\alpha] \text{ such that } \beta \text{ is an } N \text{-Morse geodesic ray with } \beta(0) = e \}.
\]
We give these sets the quotient of the compact-open topology, and then define the \emph{Morse boundary} of $X$ to be the set
\[
    \bdM X = \varinjlim \bdM^N X_e.
\]
We give the Morse boundary the direct limit topology. Note that different choices of basepoint will result in homeomorphic Morse boundaries, so we are justified in omitting the basepoint notation on $\bdM X$.

An alternative way to construct the Morse boundary is given in \cite{stabMorseBddy}. This construction involves taking the direct limit of Gromov boundaries of some suitable spaces. Specifically, we define the set
\[
    X_e^{(N)} = \{ x\in X : \exists \text{ an } N \text{-Morse geodesic } [e,x] \text{ in } X\}.
\]
These sets can be shown to be hyperbolic, so we can construct the Gromov $\bd X_e^{(N)}$ of these sets. Cordes-Hume \cite{stabMorseBddy} show that $\bd X_e^{(N)}$ is homemorphic to $\bdM^N X_e$, and that this induces a homeomorphism on direct limits. Thus, an alternate definition of the Morse boundary is
\[
    \bdM X = \varinjlim \bd X_e^{(N)}.
\]

We now provide a topological characterization of $\bdM \hh$, the proof of which relies on several results from \cite{charney2020complete}. Here, an \emph{$\omega$-Cantor space} is defined as a direct limit $\varinjlim_{i\in \N} X_i$ such that each $X_i$ is a Cantor space, $X_i \subset X_{i+1}$ for all $i$, and $X_i$ has empty interior in $X_{i+1}$ for all $i$. Charney-Cordes-Sisto \cite{charney2020complete} show that any two $\omega$-Cantor spaces are homeomorphic.

\begin{prop}\label{prop:morsebddy}
    The Morse boundary $\bdM \hh$ is an $\omega$-Cantor space.
\end{prop}
\begin{proof}
    For this proof, we appeal to \cite[Theorem 1.4]{charney2020complete}, which states that if a finitely generated group $G$ has Morse boundary which is totally disconnected, $\sigma$-compact, and contains a Cantor space, then $\bdM G$ is a Cantor space when $G$ is hyperbolic, and is an $\omega$-Cantor space when $G$ is not hyperbolic. Since $\hh$ is not hyperbolic, if we can show that $\bdM \hh$ is totally disconnected, $\sigma$-compact, and contains a Cantor space, then it will follow that $\bdM \hh$ is an $\omega$-Cantor space.
    
    First, the Main Theorem and Theorem 2.14 of \cite{contractingBddies} show that the Morse boundary of a $\cat$ space is $\sigma$-compact by showing that it is homeomorphic to the contracting boundary of a $\cat$ space, (which is $\sigma$-compact). This means $\bdM \M$ is $\sigma$-compact, and because the Morse boundary is a quasi-isometric invariant, $\bdM \hh$ must also be $\sigma$-compact.
    
    Next, as discussed in the proof of Theorem 6.6 of \cite{abbott2017largest}, for an HHS with unbounded products, the $N$th strata of of the Morse boundary topologically embeds in the Gromov boundary of the maximal hyperbolic space. In our context, this means that $\bd (\hh)_e^{(N)}$ embeds in $\bd \fcontact{\M}$, for any choice of basepoint $e$ and any choice of Morse gauge $N$. Since $\fcontact{\M}$ is a quasi-tree, its Gromov boundary is totally disconnected. Consequently, $\bdM^N \hh$ is also totally disconnected for each $N$. Since $\bdM \hh$ is $\sigma$-compact, \cite[Proposition 4.4]{charney2020complete} implies that $\bdM \hh$ is totally disconnected.
    
    The last step is to show that $\bdM \hh$ contains a Cantor space. Let $g,h\in\hh$ be mapping classes that act as pseudo-Anosovs on $\bd V_2$. For sufficiently high powers $M$ and $N$, the subgroup $G = \langle g^M, h^N \rangle$ is a free convex cocompact, (and hence stable \cite{durham2015}), subgroup of $MCG(\bd V_2)$ \cite{FarbMosher}. By \cite[Theorem 1.6]{Aougab_2017}, we know $G$ is stable in $\hh$. Because $G$ is a free group, its Gromov boundary is a Cantor space, and hence so is $\bdM G$, (via \cite[Theorem 3.10]{MorseBddy}). Thus, $\bdM \hh$ contains a Cantor space. Applying \cite[Theorem 1.4]{charney2020complete} proves the proposition.
\end{proof}

\section{Discussion of Higher Genus}\label{sec:highergenus}

In this section, we show that many of the properties of $\hh$ discussed in this paper do not hold for higher genus. First, we note that higher genus handlbody groups are not HHGs. This is because for genus $g\geq 3$, Hamenst\"adt and Hensel prove in \cite{hamHenDehn} that $\hg$ has exponential Dehn function; however, HHGs have quadratic Dehn functions \cite[Corollary 7.5]{HHSII}. In section \ref{subsec:highergenus-stable}, we also show that the analogue of Theorem \ref{thm:stable-orbit} does not hold for higher genus handlebody groups, and in Section \ref{subsec:highergenus-quasitree} we show that the disk graph for higher genus handlebodies is not a quasi-tree.

\subsection{Counterexample to stability characterization in higher genus}\label{subsec:highergenus-stable}

Suppose $g\geq 3$, and let us consider the stable subgroups of $\hg$. At a minimum, an application of \cite[Theorem 1.6]{Aougab_2017} tells us that if $H\leq \hg \leq MCG(\bd V_g)$ is a stable subgroup of $MCG(\bd V_g)$, then $H$ is a stable subgroup of $\hg$ as well. This means, for instance, that purely pseudo-Anosov subgroups $H \leq \hg \leq MCG(\bd V_g)$ will be stable in $\hg$ \cite{purelypA}.

While pseudo-Anosov mapping classes are the only elements that act loxodromically on the curve graph, there are reducible elements in the handlebody group that act loxodromically on the disk graph. It is from such mapping classes that we find a counterexample to the higher genus analogue of Theorem \ref{thm:stable-orbit}. Specifically, we prove the following.
\begin{prop}\label{prop:highergenus}
For $g\geq 3$, there exists an element $\Phi\in\hg$ such that the orbit map $\langle \Phi \rangle \to \mathcal{D}(V_g)$ is a quasi-isometric embedding but such that $\langle \Phi \rangle$ is not stable in $\hg$.
\end{prop}
To prove this, we will construct such a $\Phi$, show that it acts loxodromically on the disk graph, and then show that the cyclic subgroup generated by $\Phi$ embeds in a copy of $\Z^2 \leq \hg$.

To begin the construction of $\Phi$, let $S_0^{g+1}$, for $g\geq 3$, be a sphere with $g+1$ boundary components. Let $\delta_1$ and $\delta_2$ be two of the boundary components. Glue $\delta_1$ and $\delta_2$ together so that the resulting surface $S_1^{g-1}$ is a torus with $g-1$ boundary components. Say that $\alpha\subset S_1^{g-1}$ is the curve corresponding to $\delta_1$ and $\delta_2$. Let $N$ be a regular neighborhood of $\alpha$ and let $S = \overline{S_1^{g-1} - N}$, which is homeomorphic to $S_0^{g+1}$. Choose some reducible $\phi \in MCG(S_1^{g-1})$ that is the identity on $N$ and is pseudo-Anosov on $S$. Now let $V_g = S_1^{g-1}\times I$ where $I = [-1,1]$; $V_g$ is a genus $g$ handlebody. We define
\[ \Phi = \phi\times id\in MCG(V_g) \cong \mathcal{H}_g. \]
We will show that $\Phi$ satisfies the properties described in Proposition \ref{prop:highergenus}.

First we show that $\Phi$ is loxodromic. We say that an element $g\in G$ acting on a hyperbolic $G$-space $X$ is \emph{loxodromic} if the orbit map $\Z \to X$ given by $n\mapsto g^n\cdot x$ for some (any) $x\in X$ is a quasi-isometric embedding. Considering the orbit map of the entire group $G\to X$, being loxodromic easily implies $\langle g\rangle$ quasi-isometrically embeds in $G$.

To see that $\Phi$ is loxodromic, we use the idea of witnesses, (previously called holes due to Masur and Schleimer \cite{MasSchleim}). A $\emph{witness}$ for the disk graph $\mathcal{D}(V_g)$ is a essential subsurface $\Sigma\subset \bd V_g$ such that every representative of every meridian on $V_g$ has non-empty intersection with $\Sigma$. Masur and Schleimer show that distances in the disk graph can be estimated using distances in the curve graphs of witnesses. Specifically, for large enough $A$, there is a constant $B$ such that the following holds:
\[
    d_{\mathcal{D}(V_g)}(\alpha,\beta) =_B \sum_{X \text{ witness}} [d_{\mathcal{C}(X)} (\pi_X(\alpha),\pi_X(\beta))]_A.
\]
Here $=_B$ indicates equality up to additive and multiplicative errors, $[x]_A$ is $x$ if $x\geq A$ and is $0$ otherwise, and $\pi_X$ indicates the subsurface projection to $X$. For more details about witnesses and the distance formula, see \cite{MasSchleim}.

Using the distance formula and witnesses, we can prove the following lemma.
\begin{lemma}\label{lem:loxodromic}
$\Phi$ is loxodromic.
\end{lemma}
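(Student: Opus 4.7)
The plan is to exhibit a single witness $\Sigma \subset \partial V_g$ on which $\Phi$ restricts to a pseudo-Anosov, and then read off linear growth of disk-graph distance from the Masur--Schleimer distance formula combined with the fact that pseudo-Anosovs act loxodromically on the curve graph.

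The key step is to choose $\Sigma = S \times \{1\} \subseteq \partial V_g$ and verify it is a witness. The complement $\partial V_g \setminus \Sigma$ has two components: the annulus $A = N \times \{1\}$, with core $\alpha \times \{1\}$, and the subsurface $Y = (S_1^{g-1} \times \{-1\}) \cup (\partial S_1^{g-1} \times I)$, which deformation retracts onto $S_1^{g-1} \times \{-1\}$. Any meridian $m$ disjoint from $\Sigma$ lies in $A$ or $Y$. If $m \subset A$, then $m$ is isotopic to $\alpha \times \{1\}$; but $\alpha$ represents a nontrivial class in $\pi_1(S_1^{g-1}) = \pi_1(V_g)$, so $m$ cannot bound a disk in $V_g$. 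If $m \subset Y$, then since $Y \hookrightarrow V_g$ is a homotopy equivalence, it is $\pi_1$-injective; either $m$ is essential in $Y$ (hence nontrivial in $\pi_1(V_g)$), or $m$ is peripheral in $Y$ and thus isotopic to $\delta \times \{1\}$ for some boundary circle $\delta$ of $S_1^{g-1}$, which again represents a nontrivial class in $\pi_1(V_g)$. In either case $m$ is not null-homotopic in $V_g$, contradiction, so $\Sigma$ is a witness.

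Since $\phi$ fixes $N$ (and hence $S$) setwise, $\Phi = \phi \times \mathrm{id}$ preserves $\Sigma$ and restricts there to the pseudo-Anosov $\phi|_S$. By Masur--Minsky, $\phi|_S$ has positive translation length on the (arc-and-)curve graph $\mathcal{C}(\Sigma) \cong \mathcal{C}(S)$. Fix a meridian $m_0$; since $\Sigma$ is a witness, $\pi_\Sigma(m_0)$ is nonempty, and because $\Phi$ stabilizes $\Sigma$, $\pi_\Sigma(\Phi^n m_0)$ agrees with $\phi^n \pi_\Sigma(m_0)$ up to uniformly bounded error, so $d_{\mathcal{C}(\Sigma)}\bigl(\pi_\Sigma(m_0), \pi_\Sigma(\Phi^n m_0)\bigr) \gtrsim |n|$. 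Choosing $A$ above the Masur--Schleimer threshold and extracting the $\Sigma$-term from the distance formula gives $d_{\mathcal{D}(V_g)}(m_0, \Phi^n m_0) \gtrsim |n|$. The matching linear upper bound is automatic, since $\Phi$ acts on $\mathcal{D}(V_g)$ by isometries. Thus $n \mapsto \Phi^n m_0$ is a quasi-isometric embedding, i.e., $\Phi$ is loxodromic.

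The main obstacle is the choice of witness. The symmetric candidate $\partial V_g \setminus (N \times \{-1\} \cup N \times \{1\})$ fails: $\Phi$ acts as the identity on the connecting cylinders $\partial S_1^{g-1} \times I$, whose essential core curves become invariant and prevent a pseudo-Anosov restriction. The asymmetric choice $\Sigma = S \times \{1\}$ is engineered to avoid the fixed cylindrical region entirely while still intersecting every meridian, and the witness verification hinges on the $\pi_1$-injectivity of $Y \hookrightarrow V_g$ together with the fact that $\alpha$ and the boundary components $\delta$ of $S_1^{g-1}$ all represent nontrivial classes in $\pi_1(V_g)$.
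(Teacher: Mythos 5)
Your proposal is correct and follows essentially the same route as the paper: you choose the same witness $\Sigma = S \times \{1\}$, verify it is a witness via $\pi_1$-injectivity of the complementary pieces, observe that $\Phi$ restricts to a pseudo-Anosov on $\Sigma$, and invoke the Masur--Schleimer distance formula to pull linear growth in $\mathcal{C}(\Sigma)$ back to linear growth in $\mathcal{D}(V_g)$. Your witness verification is slightly more explicit than the paper's (decomposing the complement into the annulus $A$ and the subsurface $Y$ and treating each), and your closing remark on why the symmetric witness candidate fails is a helpful addition not present in the paper, but the argument is the same.
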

\begin{proof}
The upper bound follows from the fact that orbit maps of finitely generated groups are Lipschitz.

For the lower bound, recall that $S = \overline{S_1^{g-1} - N}$ and let 
\[S_i = S\times \{i\} \subset S_1^{g-1} \times \{i\} \subset \bd V_g\]
for $i\in\{-1,1\}$. By the construction of $V_g$, $S_1$ must be a witnesses for $V_g$. To see this, notice that the inclusions $S_{-1}\cup (N\times \{-1\})\hookrightarrow V_g$ and $N\times \{1\}\hookrightarrow V_g$ are $\pi_1$-injective, implying $S_{-1}\cup (N\times \{-1\})$ and $N\times \{1\}$ are incompressible in $V_g$. It follows that no meridian is contained in $S_{-1}\cup (N\times \{\pm 1\})$. Further, no meridian is contained in any component of $\bd S^{g-1}_1 \times I$. Hence, $S_1$ must be a witness.

Because $\Phi|_{S_1}$ is a pseudo-Anosov, then for any $\beta\in\mathcal{D}(V_g)^{(0)}$, the distance $d_{\mathcal{C}(S_1)}(\Phi^n\cdot \beta,\beta)$ must be growing linearly in $n$. Since $S_1$ is a witness for $V_g$, the distance formula tells us that $d_{\mathcal{D}(V_g)}(\Phi^n\cdot \beta,\beta)$ must also be growing linearly.
\end{proof}

In order to prove $\langle \Phi \rangle$ is not stable, we will show that $\langle \Phi \rangle \subset \hg$ is contained in a quasi-isometrically embedded copy of $\Z^2\subset \hg$. To this end, let $A_{\alpha}\subset V_g$ be the properly embedded annulus bounded by $\alpha\times \{-1\}$ and $\alpha\times \{1\}$, where $\alpha$ is as before. Let $\Psi$ be the annulus twist about $A_{\alpha}$, ie $\Psi = T_{\alpha\times \{1\}}T_{\alpha\times \{-1\}}^{-1}\in \hg$.

\begin{lemma}\label{lem:not-stable}
$\langle \Phi \rangle$ is not stable in $\hg$.
\end{lemma}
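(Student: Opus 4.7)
The plan is to realise $\langle \Phi \rangle$ as a cyclic subgroup of an undistorted copy of $\Z^2$ in $\hg$, namely $\langle \Phi, \Psi\rangle$, and then rule out stability of $\langle \Phi \rangle$ via a standard detour argument inside this $\Z^2$.

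First, since $\phi$ is the identity on $N$, the mapping class $\Phi = \phi \times id$ is the identity on $N \times I$, which contains a neighborhood of the annulus $A_\alpha = \alpha\times I$ supporting $\Psi$; hence $\Phi$ and $\Psi$ commute. Both have infinite order: $\phi$ is pseudo-Anosov on $S$, and $\Psi$ induces on $\bd V_g$ two nontrivial Dehn twists along distinct curves. Restricting to the subsurface $S \times \{1\} \subset \bd V_g$, the element $\Phi^n$ acts as $\phi^n|_S$ (pseudo-Anosov) while $\Psi^m$ acts trivially, so any relation $\Phi^a = \Psi^b$ in $\hg$ forces $a = 0$, and therefore $b = 0$; thus $\langle \Phi, \Psi\rangle \cong \Z^2$.

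The main step is to show that this $\Z^2$ is undistorted in $\hg$. Because the inclusion $\hg \hookrightarrow MCG(\bd V_g)$ is Lipschitz, it suffices to bound $|\Phi^n \Psi^m|_{MCG(\bd V_g)}$ from below by a linear function of $|n| + |m|$, and the Masur--Minsky distance formula does exactly this once we exhibit subsurface projections that separately detect each factor. Take $Y_1 := S \times \{1\}$: both Dehn twists composing $\Psi^m$ are along curves that are either contained in $\bd Y_1$ or disjoint from $Y_1$, so $\Psi^m$ acts trivially on $\pi_{Y_1}$; meanwhile $\Phi$ preserves $Y_1$ and acts there as the pseudo-Anosov $\phi|_S$, giving $d_{Y_1}(\mu, \Phi^n\Psi^m\cdot\mu) \gtrsim |n|$ for any fixed marking $\mu$. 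Take $Y_2$ to be an annular neighborhood of $\alpha\times\{1\}$: since $\Phi$ is the identity on $N\times\{1\} \supset Y_2$, it acts trivially on $\pi_{Y_2}$, whereas on $Y_2$ the element $\Psi^m$ acts as $T_{\alpha\times\{1\}}^m$ (the other twist $T_{\alpha\times\{-1\}}^{-m}$ being supported disjointly), yielding $d_{Y_2}(\mu, \Phi^n\Psi^m\cdot\mu) \gtrsim |m|$. Summing in the distance formula gives $|\Phi^n\Psi^m|_{MCG(\bd V_g)} \gtrsim |n| + |m|$, and hence $|\Phi^n\Psi^m|_{\hg}\gtrsim |n|+|m|$.

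Finally, suppose for contradiction that $\langle \Phi \rangle$ is stable in $\hg$. By Lemma~\ref{lem:loxodromic}, $\langle \Phi\rangle$ is undistorted in $\hg$, so the direct path $e, \Phi, \ldots, \Phi^n$ is a quasi-geodesic of $\hg$-length $\asymp n$. On the other hand, the detour $e\to \Psi^n\to\Phi^n\Psi^n\to\Phi^n$ is a $(3,0)$-quasi-geodesic in the undistorted $\Z^2$, so it is a $(K,C)$-quasi-geodesic in $\hg$ with $K,C$ independent of $n$. Its midpoint $\Psi^n$ satisfies $d_{\hg}(\Psi^n, \Phi^k) = |\Phi^{-k}\Psi^n|_{\hg} \gtrsim |k| + n$ for every $k$, so it lies at $\hg$-distance $\gtrsim n$ from the first quasi-geodesic. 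For $n$ large this contradicts the defining property of stability. The main obstacle is the undistortion step: although $\hg$ is exponentially distorted in $MCG(\bd V_g)$ for $g\geq 3$, this distortion works in our favour because the Lipschitz inclusion gives $|\cdot|_{\hg} \gtrsim |\cdot|_{MCG}$, so Masur--Minsky lower bounds transfer, and the only care needed is verifying that the chosen subsurface projections genuinely separate the actions of $\Phi$ and $\Psi$, which is guaranteed by the disjointness of their supports in the construction $V_g = S_1^{g-1}\times I$.
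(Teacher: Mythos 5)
Your proof is correct and follows essentially the same route as the paper: realize $\langle\Phi\rangle$ inside the commuting pair $\langle\Phi,\Psi\rangle\cong\Z^2$, show this $\Z^2$ is undistorted in $\hg$ by pushing Masur--Minsky lower bounds from $MCG(\bd V_g)$ through the Lipschitz inclusion $\hg\hookrightarrow MCG(\bd V_g)$, and then conclude that a cyclic subgroup sitting inside a quasi-flat cannot be stable. The paper states the undistortion and the final non-stability step tersely (citing the distance formula and leaving the $\Z^2$-obstruction implicit), whereas you spell out the separating subsurface projections ($Y_1 = S\times\{1\}$ detecting $\Phi$, an annular $Y_2$ near $\alpha\times\{1\}$ detecting $\Psi$) and the explicit detour quasi-geodesic; these are exactly the routine verifications the paper is suppressing, and your elaboration of them is accurate.
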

\begin{proof}
The map $\Psi$ commutes with $\Phi$, so $\langle \Phi ,\Psi \rangle\cong \Z^2$. Furthermore, by appealing to the Masur-Minsky distance formula for $MCG(\bd V_g)$, (see \cite{MMII}), we find that $\langle \Phi, \Psi \rangle \hookrightarrow MCG(\bd V_g)$ is a quasi-isometric embedding. Since the inclusion $\hg \hookrightarrow MCG(\bd V_g)$ is Lipschitz, the inclusion $\langle \Phi,\Psi \rangle \hookrightarrow \hg$ must be a quasi-isometric embedding. Finally, because $\langle \Phi \rangle$ is contained in a quasi-isometrically embedded copy of $\Z^2\subset \hg$, it cannot be stable.
\end{proof}

Lemmas \ref{lem:loxodromic} and \ref{lem:not-stable}  give us Proposition \ref{prop:highergenus}.

One consequence of Proposition \ref{prop:highergenus} is the following.
\begin{cor}\label{cor:notacylindrical}
    The action of $\hg$ for $g\geq 3$ on $\mathcal{D}(V_g)$ is not acylindrical.
\end{cor}
\begin{proof}
    We know via \cite[Corollary 2.9]{DahGuiOsin} and \cite[Theorem 1]{SistoQuasiConvex} that if $G$ is a group acting acylindrically on a hyperbolic space $X$, then any infinite order, loxodromic element $g\in G$ must be stable, (ie $\langle g \rangle$ is stable in $G$). Propostion \ref{prop:highergenus} provides us with an infinite order element acting loxodromically on $\mathcal{D}(V_g)$ that is not stable.
\end{proof}

We should point out that this does not mean that $\hg$ is not acylindrically hyperbolic. In fact, one can see that $\hg$ is acylindrically hyperbolic via the fact that the action of the mapping class group on the curve graph is acylindrical \cite{Bowditch}. Since $\hg \leq MCG(\bd V_g)$, the action of $\hg$ on the curve graph must also be acylindrical.

\subsection{Higher genus disk graphs are not quasi-trees}\label{subsec:highergenus-quasitree}

In this last section, we show that for $g \geq 3$, the disk graph $\DVg$ is not a quasi-tree.

\begin{prop}
    For $g \geq 3$, the disk graph $\DVg$ is not a quasi-tree.
\end{prop}
\begin{proof}
    To prove this proposition, we will show that the arc graph of a specific witness quasi-isometrically embeds in $\DVg$, and then show that the boundary of the arc graph is path connected. Since the boundary of quasi-trees are totally disconnected, this will prove the proposition.

    First, let $S_1^{g-1}$ be a torus with $g-1$ boundary components, and recall that \[V_g = S_1^{g-1} \times [-1,1]\] is a genus $g$ handlebody. Let $W = S_1^{g-1} \times \{1\}$, and let $\mathcal{A}(W)$ denote the arc graph of $W$, ie the simplicial graph whose vertices correspond to isotopy classes of essential arcs and whose edges correspond to disjointness. We can define a map $f:\mathcal{A}(W)^{(0)} \to \DVg^{(0)}$ by $\alpha \mapsto \alpha\times [-1,1]$. Notice that this map extends to a simplicial embedding $f:\mathcal{A}(W) \to \DVg$ since disjoint arcs will map to disjoint disks. We will show that $f$ is a quasi-isometric embedding.
    
    Since $f$ is a simplicial embedding, it is $1$-Lipschitz, so we must only prove the lower bound. To do this, we appeal again to the notion of witnesses, this time for the arc graph. The witnesses for the arc graph of a surface $S$ with boundary, as described in \cite{MasSchleim}, are all essential subsurfaces $X \subset S$ such that $\bd S \subset \bd X$. There is a analogous distance formula for the arc graph in which the summation is taken over all witnesses for the arc graph of $S$:
    \[
    d_{\mathcal{A}(S)}(\alpha,\beta) =_B \sum_{X \text{ witness}} [d_{\mathcal{C}(X)} (\pi_X(\alpha),\pi_X(\beta))]_A.
    \]
    For $W$, the only arc graph witnesses with infinite diameter curve graphs are $W$ itself, and spheres with $g$ or $g+1$ boundary components, (including the components of $\bd W$), depending on whether you cut $W$ along a separating or non-separating curve. As described in Section \ref{subsec:highergenus-stable}, a sphere with $g+1$ boundary components obtained from cutting $W$ along a non-separating curve is also a witness for the disk graph $\DVg$. By a similar argument, we find that a sphere $S_0^g$ with $g$ boundary components obtained by cutting $W$ along a separating curve will also be a witness for $\DVg$: all components of $\bd V_g - S_0^g$ are incompressible, so any meridian in $V_g$ must have non-empty intersection with $S_0^g$.
    
    The relationship between the witnesses for $\mathcal{A}(W)$ and the witnesses for $\DVg$, along with the respective distance formulas,  implies that distances in $\DVg$ are bounded linearly from below by distances in $\mathcal{A}(W)$. We have thus shown that $f$ is a quasi-isometric embedding, and it then follows that $\bd \mathcal{A}(W)$ is homeomorphic to a subspace of $\bd \DVg$.
    
    By \cite[Theorem 1.2]{pho-on2017}, $\bd \mathcal{A}(W)$ is homeomorphic to the space of peripherally ending laminations $\mathcal{EL}_0(W)$, and by \cite[Theorem 1.3]{klarreich2018boundary}, we know that $\bd \mathcal{C}(W)$ is homemorphic to the space of ending laminations $\mathcal{EL}(W).$ Furthermore, since $\mathcal{EL}(W)$  is a subspace of $ \mathcal{EL}_0(W)$, it follows that $\bd \mathcal{C}(W)$  is a subspace of $\bd\mathcal{A}(W).$

    By \cite[Theorem 0.1]{Gabai}, $\mathcal{EL}(W)$ is path-connected, and because
    \[ \bd \mathcal{C}(W) \subset \bd \mathcal{A}(W) \subset \bd \DVg,\]
    it follows that $\bd\DVg$ is not totally disconnected. Hence $\DVg$ cannot be a quasi-tree.
\end{proof}

\bibliographystyle{amsalpha}
\bibliography{bib}

\newcommand{\etalchar}[1]{$^{#1}$}
\providecommand{\bysame}{\leavevmode\hbox to3em{\hrulefill}\thinspace}
\providecommand{\MR}{\relax\ifhmode\unskip\space\fi MR }
\providecommand{\MRhref}[2]{%
  \href{http://www.ams.org/mathscinet-getitem?mr=#1}{#2}
}
\providecommand{\href}[2]{#2}
\begin{thebibliography}{ABB{\etalchar{+}}17}

\bibitem[ABB{\etalchar{+}}17]{abbott2017largest}
Carolyn Abbott, Jason Behrstock, Daniel Berlyne, Matthew~Gentry Durham, and
  Jacob Russell, \emph{Largest acylindrical actions and stability in
  hierarchically hyperbolic groups}, 2017, to appear in Transactions of the
  AMS.

\bibitem[ADT17]{Aougab_2017}
Tarik Aougab, Matthew~Gentry Durham, and Samuel~J. Taylor, \emph{Pulling back
  stability with applications to {${\rm Out}(F_n)$} and relatively hyperbolic
  groups}, J. Lond. Math. Soc. (2) \textbf{96} (2017), no.~3, 565--583.
  \MR{3742433}

\bibitem[BBKL20]{purelypA}
Mladen Bestvina, Kenneth Bromberg, Autumn~E. Kent, and Christopher~J.
  Leininger, \emph{Undistorted purely pseudo-{A}nosov groups}, J. Reine Angew.
  Math. \textbf{760} (2020), 213--227. \MR{4069890}

\bibitem[BH99]{BH}
Martin~R. Bridson and Andr\'{e} Haefliger, \emph{Metric spaces of non-positive
  curvature}, Grundlehren der Mathematischen Wissenschaften [Fundamental
  Principles of Mathematical Sciences], vol. 319, Springer-Verlag, Berlin,
  1999. \MR{1744486}

\bibitem[BHS17]{behrstock2017}
Jason Behrstock, Mark~F. Hagen, and Alessandro Sisto, \emph{Hierarchically
  hyperbolic spaces, {I}: {C}urve complexes for cubical groups}, Geom. Topol.
  \textbf{21} (2017), no.~3, 1731--1804. \MR{3650081}

\bibitem[BHS19]{HHSII}
Jason Behrstock, Mark Hagen, and Alessandro Sisto, \emph{Hierarchically
  hyperbolic spaces {II}: {C}ombination theorems and the distance formula},
  Pacific J. Math. \textbf{299} (2019), no.~2, 257--338. \MR{3956144}

\bibitem[Bir75]{birman1975}
Joan~S. Birman, \emph{On the equivalence of {H}eegaard splittings of closed,
  orientable {$3$}-manifolds}, Knots, groups, and {$3$}-manifolds ({P}apers
  dedicated to the memory of {R}. {H}. {F}ox), 1975, pp.~137--164. Ann. of
  Math. Studies, No. 84. \MR{0375318}

\bibitem[Bow08]{Bowditch}
Brian~H. Bowditch, \emph{Tight geodesics in the curve complex}, Invent. Math.
  \textbf{171} (2008), no.~2, 281--300. \MR{2367021}

\bibitem[CCS20]{charney2020complete}
Ruth Charney, Matthew Cordes, and Alessandro Sisto, \emph{Complete topological
  descriptions of certain morse boundaries}, 2020.

\bibitem[CH17]{stabMorseBddy}
Matthew Cordes and David Hume, \emph{Stability and the {M}orse boundary}, J.
  Lond. Math. Soc. (2) \textbf{95} (2017), no.~3, 963--988. \MR{3664526}

\bibitem[Cor17]{MorseBddy}
Matthew Cordes, \emph{Morse boundaries of proper geodesic metric spaces},
  Groups Geom. Dyn. \textbf{11} (2017), no.~4, 1281--1306. \MR{3737283}

\bibitem[CS15]{contractingBddies}
Ruth Charney and Harold Sultan, \emph{Contracting boundaries of {$\rm CAT(0)$}
  spaces}, J. Topol. \textbf{8} (2015), no.~1, 93--117. \MR{3339446}

\bibitem[DGO17]{DahGuiOsin}
F.~Dahmani, V.~Guirardel, and D.~Osin, \emph{Hyperbolically embedded subgroups
  and rotating families in groups acting on hyperbolic spaces}, Mem. Amer.
  Math. Soc. \textbf{245} (2017), no.~1156, v+152. \MR{3589159}

\bibitem[DT15]{durham2015}
Matthew~Gentry Durham and Samuel~J. Taylor, \emph{Convex cocompactness and
  stability in mapping class groups}, Algebr. Geom. Topol. \textbf{15} (2015),
  no.~5, 2839--2859. \MR{3426695}

\bibitem[FM02]{FarbMosher}
Benson Farb and Lee Mosher, \emph{Convex cocompact subgroups of mapping class
  groups}, Geom. Topol. \textbf{6} (2002), 91--152. \MR{1914566}

\bibitem[Gab09]{Gabai}
David Gabai, \emph{Almost filling laminations and the connectivity of ending
  lamination space}, Geom. Topol. \textbf{13} (2009), no.~2, 1017--1041.
  \MR{2470969}

\bibitem[Hag14]{Hagen_2013}
Mark~F. Hagen, \emph{Weak hyperbolicity of cube complexes and quasi-arboreal
  groups}, J. Topol. \textbf{7} (2014), no.~2, 385--418. \MR{3217625}

\bibitem[Ham05]{hamenstaedt2005word}
Ursula Hamenst\"{a}dt, \emph{Word hyperbolic extensions of surface groups},
  2005.

\bibitem[Hen18]{primerHg}
Sebastian Hensel, \emph{A primer on handlebody groups},
  \url{https://www.mathematik.uni-muenchen.de/~hensel/research.php}, to appear
  in Handbook of Group Actions., 2018.

\bibitem[HH12]{hamenstdt2011geometry}
Ursula Hamenst\"{a}dt and Sebastian Hensel, \emph{The geometry of the
  handlebody groups {I}: distortion}, J. Topol. Anal. \textbf{4} (2012), no.~1,
  71--97. \MR{2914874}

\bibitem[HH18]{hamHenDehn}
Ursula Hamenst\"{a}dt and Sebastian Hensel, \emph{The geometry of the
  handlebody groups {II}: Dehn functions}, 2018, to appear in Michigan
  Mathematical Journal.

\bibitem[HS20]{Hagen_2020}
Mark~F. Hagen and Tim Susse, \emph{On hierarchical hyperbolicity of cubical
  groups}, Israel J. Math. \textbf{236} (2020), no.~1, 45--89. \MR{4093881}

\bibitem[KL08]{shadows}
Richard~P. Kent, IV and Christopher~J. Leininger, \emph{Shadows of mapping
  class groups: capturing convex cocompactness}, Geom. Funct. Anal. \textbf{18}
  (2008), no.~4, 1270--1325. \MR{2465691}

\bibitem[Kla18]{klarreich2018boundary}
Erica Klarreich, \emph{The boundary at infinity of the curve complex and the
  relative {T}eichm\"{u}ller space}, 2018.

\bibitem[KMT17]{RAAGs}
Thomas Koberda, Johanna Mangahas, and Samuel~J. Taylor, \emph{The geometry of
  purely loxodromic subgroups of right-angled {A}rtin groups}, Trans. Amer.
  Math. Soc. \textbf{369} (2017), no.~11, 8179--8208. \MR{3695858}

\bibitem[Mas86]{masur1986}
Howard Masur, \emph{Measured foliations and handlebodies}, Ergodic Theory
  Dynam. Systems \textbf{6} (1986), no.~1, 99--116. \MR{837978}

\bibitem[MM00]{MMII}
H.~A. Masur and Y.~N. Minsky, \emph{Geometry of the complex of curves. {II}.
  {H}ierarchical structure}, Geom. Funct. Anal. \textbf{10} (2000), no.~4,
  902--974. \MR{1791145}

\bibitem[MM04]{diskQuasiconvex}
Howard~A. Masur and Yair~N. Minsky, \emph{Quasiconvexity in the curve complex},
  In the tradition of {A}hlfors and {B}ers, {III}, Contemp. Math., vol. 355,
  Amer. Math. Soc., Providence, RI, 2004, pp.~309--320. \MR{2145071}

\bibitem[MS13]{MasSchleim}
Howard Masur and Saul Schleimer, \emph{The geometry of the disk complex}, J.
  Amer. Math. Soc. \textbf{26} (2013), no.~1, 1--62. \MR{2983005}

\bibitem[PO17]{pho-on2017}
Witsarut Pho-On, \emph{Infinite unicorn paths and {G}romov boundaries}, Groups
  Geom. Dyn. \textbf{11} (2017), no.~1, 353--370. \MR{3641844}

\bibitem[Sag95]{sageev}
Michah Sageev, \emph{Ends of group pairs and non-positively curved cube
  complexes}, Proc. London Math. Soc. (3) \textbf{71} (1995), no.~3, 585--617.
  \MR{1347406}

\bibitem[Sis16]{SistoQuasiConvex}
Alessandro Sisto, \emph{Quasi-convexity of hyperbolically embedded subgroups},
  Math. Z. \textbf{283} (2016), no.~3-4, 649--658. \MR{3519976}

\bibitem[Suz77]{suzuki}
Shin'ichi Suzuki, \emph{On homeomorphisms of a 3-dimensional handlebody},
  Canadian J. Math. \textbf{29} (1977), no.~1, 111--124. \MR{433433}

\bibitem[Waj98]{wajnryb}
Bronis\l~aw Wajnryb, \emph{Mapping class group of a handlebody}, Fund. Math.
  \textbf{158} (1998), no.~3, 195--228. \MR{1663329}

\end{thebibliography}

\end{document}